\newtheorem{theorem}{Theorem}[section]
\newtheorem{proposition}[theorem]{Proposition}
\theoremstyle{definition}
\newtheorem{definition}[theorem]{Definition}
\theoremstyle{remark}
\newtheorem{remark}[theorem]{Remark}
\def\D{\displaystyle}
\numberwithin{equation}{section}
\DeclareMathOperator{\ord}{ord} \DeclareMathOperator{\Card}{Card}
\DeclareMathOperator{\trdeg}{tr.deg} \DeclareMathOperator{\rk}{rk}
\DeclareMathOperator{\Ker}{Ker}
\begin{document}

\title[Strength of difference schemes for reaction-diffusion equations]
{Evaluation of the Einstein's strength of difference schemes for some chemical reaction-diffusion equations}

\author{Alexander Evgrafov}
\address{Department of Analytical, Physical and Colloid Chemistry,
Sechenov First Moscow State Medical University, Moscow 119991,
Russia} \email{afkx\_farm@mail.ru}

\author{Alexander Levin}
\address{Department of Mathematics, The Catholic University of
America, Washington, D.C. 20064}

\email{levin@cua.edu}

\subjclass[2000]{Primary 12H10; Secondary 39A10, 35K57}

\date\today

\keywords{Reaction-diffusion, Difference scheme, Difference
polynomial, Characteristic set, Einstein's strength}

\begin{abstract}
In this paper we present a difference algebraic technique for the
evaluation of the Einstein's strength of a system of partial
difference equations and apply this technique to the comparative
analysis of difference schemes for chemical reaction-diffusion
equations. In particular, we analyze finite-difference schemes for
the Murray, Fisher, Burgers and some other reaction-diffusion
equations, as well as mass balance PDEs of chromatography from the
point of view of their strength.
\end{abstract}

\maketitle
\section{Introduction}

The concept of strength of a system of partial differential
equations (PDEs) was introduced by A. Einstein  as a characteristic
that provides a measure for the size of the solution space of such a
system. In \cite{Einstein} A. Einstein defined the strength of a
system of partial differential equations governing a physical field
as follows: ''\ldots the system of equations is to be chosen so that
the field quantities are determined as strongly as possible. In
order to apply this principle, we propose a method which gives a
measure of strength of an equation system. We expand the field
variables, in the neighborhood of a point $\mathcal{P}$, into a
Taylor series (which presupposes the analytic character of the
field); the coefficients of these series, which are the derivatives
of the field variables at $\mathcal{P}$, fall into sets according to
the degree of differentiation. In every such degree there appear,
for the first time, a set of coefficients which would be free for
arbitrary choice if it were not that the field must satisfy a system
of differential equations.  Through this system of differential
equations (and its derivatives with respect to the coordinates) the
number of coefficients is restricted, so that in each degree a
smaller number of coefficients is left free for arbitrary choice.
The set of numbers of 'free' coefficients for all degrees of
differentiation is then a measure of the 'weakness' of the system of
equations, and through this, also of its 'strength'.''

The comparison analysis of the strength of systems of PDEs
describing different mathematical models of the same process allows
one to obtain an optimal model, that is, to minimize the
''arbitrariness'' of the corresponding solutions. As an application
of this approach, A. Einstein determined that the potential and
field formulations of Maxwell equations have different strengths for
the dimension four. However, he did not obtain the exact expression
of the above-mentioned number of free coefficients as a function of
the degree of differentiation. The reason was that there were no
methods for the strength computation, so Einstein had to evaluate
this characteristic ''by hand''.

Even though there are a number of works on the strength of a system
of partial differential equations (in particular, on its relation to
Cartan characters), see, for example, \cite{Mariwalla},
\cite{Matthews1}, \cite{Matthews2}, \cite{Schutz}, \cite{Seiler1},
\cite{Seiler2}, \cite{Seiler3} and \cite{Sue}, there was no method
of its evaluation until 1980 when A. Mikhalev and E. Pankratev
\cite{MP} showed that the strength of a system of algebraic PDEs
(that is, a system of the form $f_{i} = 0$, $i\in I$, where $f_{i}$
are multivariate polynomials in unknown functions and their partial
derivatives) is expressed by the associated differential dimension
polynomial introduced by E. Kolchin \cite{Kolchin1} (see also
\cite[Chapter II, Theorem 6]{Kolchin2}). This observation has led to
algorithmic algebraic methods of computation of the strength of a
system of algebraic PDEs via computing the differential dimension
polynomial of the corresponding differential ideal in the algebra of
differential polynomials. The theoretical base for these methods and
their detailed description can be found in \cite{KLMP}.

Note that A. Einstein \cite{Einstein}, K. Mariwalla
\cite{Mariwalla}, M. Sue \cite{Sue} and some other authors who
investigated the concept of strength characterized the strength of a
system by the ''coefficient of freedom'', an integer, that is fully
determined by the leading coefficient of the differential dimension
polynomial. The fact that such a polynomial provides a far more
precise description of the strength than its leading term was
justified by the result of W. Sit \cite{Sit} who proved that the set
of differential dimension polynomials is well-ordered with respect
to the natural order ($f(t) < g(t)$ if and only if $f(r) < g(r)$ for
all sufficiently large integers $r$); this result  allows one to
distinguish two systems of PDEs with the same ''coefficient of
freedom'' by their strength.

Since 1980s the technique of dimension polynomials has been extended
to the analysis of systems of algebraic difference and
difference-differential equations. In a series of works whose
results are summarized in \cite{Levin1} the second author proved the
existence and developed some methods of computation of dimension
polynomials of difference field extensions and systems of algebraic
difference equations. These polynomials determine A. Einstein's
strength of a system of algebraic partial difference equations (we
give the details in Section 3 of this work) and, in particular,
allow one to evaluate the quality of difference schemes for PDEs
from the point of view of their strength.

In this paper we present a method of characteristic sets for
inversive difference polynomials and consider applications of this
technique to the analysis of difference schemes for
reaction-diffusion PDEs. We determine the strengths of systems of
partial difference equations that arise from such schemes and
perform their comparative analysis. We also perform a similar
investigation of difference schemes for mass balance PDEs of
chromatography that is one of the main methods for accurate, and
rapid determination of biologically active organic carboxylic acids
in objects such as infusion solutions and blood preservatives, see
\cite{Evgrafof0}, \cite{Evgrafof1} and \cite{Evgrafof2}.

\section{Preliminaries}
Throughout the paper, $\mathbb{N}, \mathbb{Z}$, $\mathbb{Q}$, and
$\mathbb{R}$ denote the sets of all non-negative integers, integers,
rational numbers, and real numbers, respectively. The number of
elements of a set $A$ is denoted by $\Card A$. As usual, $\mathbb
{Q}[t]$ denotes the ring of polynomials in one variable $t$ with
rational coefficients. By a ring we always mean an associative ring
with unity. All fields considered in the paper are supposed to be of
zero characteristic. Every ring homomorphism is unitary (maps unity
onto unity), every subring of a ring contains the unity of the ring.

If $B = A_{1}\times\dots\times A_{k}$ is a Cartesian product of $k$
ordered sets with orders $\leq_{1},\dots \leq_{k}$, respectively
($k\in \mathbb{N}$, $k\geq 1$), then by the product order on $B$ we
mean a partial order $\leq_{P}$ such that $(a_{1},\dots,
a_{k})\leq_{P}(a'_{1},\dots, a'_{k})$ if and only if
$a_{i}\leq_{i}a'_{i}$ for $i=1,\dots, k$. In particular, if $a =
(a_{1},\dots, a_{k}), \,a'=(a'_{1},\dots, a'_{k})\in
\mathbb{N}^{k}$, then $a\leq_{P}a'$ if and only if $a_{i}\leq
a'_{i}$ for $i=1,\dots, k$. We write $a <_{P}a'$ if $a\leq_{P}a'$
and $a\neq a'$.

\smallskip

In this section we present some background material needed for the
rest of the paper. In particular, we discuss basic concepts and
results of difference algebra and properties of dimension
polynomials associated with subsets of $\mathbb{N}^{m}$ and
$\mathbb{Z}^{m}$.

\bigskip

{\bf 2.1.\, Basic notions of difference algebra.}\, A {\em
difference ring} is a commutative ring $R$ together with a finite
set $\sigma = \{\alpha_{1}, \dots, \alpha_{m}\}$ of mutually
commuting injective endomorphisms of $R$ into itself. The set
$\sigma$ is called the {\em basic set\/} of the difference ring $R$,
and the endomorphisms $\alpha_{1}, \dots, \alpha_{m}$ are called
{\em translations.}\, A difference ring with a basic set $\sigma$ is
also called a {\em $\sigma$-ring}. If $\alpha_{1}, \dots,
\alpha_{m}$ are automorphisms of $R$, we say that $R$ is an {\em
inversive difference ring\/} with the basic set $\sigma$.  In this
case we denote the set $\{\alpha_{1}, \dots, \alpha_{m},
\alpha_{1}^{-1}, \dots, \alpha_{m}^{-1}\}$ by $\sigma^{\ast}$ and
call $R$ a $\sigma^{\ast}$-ring. If a difference ($\sigma$-) ring
$R$ is a field, it is called a difference (or $\sigma$-) field. If
$R$ is inversive, it is called an inversive difference field or a
$\sigma^{\ast}$-field.

Let $R$ be a difference (inversive difference) ring with a basic set
$\sigma$ and $R_{0}$ a subring of $R$ such that
$\alpha(R_{0})\subseteq R_{0}$ for any $\alpha \in \sigma$
(respectively, for any $\alpha \in \sigma^{\ast}$). Then $R_{0}$ is
called a {\em difference\/} or $\sigma$- (respectively, {\em
inversive difference} or $\sigma^{\ast}$-) {\em subring\/} of $R$,
while the ring $R$ is said to be a {\em difference} or $\sigma$-
(respectively, {\em inversive difference} or $\sigma^{\ast}$-) {\em
overring} of $R_{0}$. In this case the restriction of an
endomorphism $\alpha_{i}$ on $R_{0}$ is denoted by the same symbol
$\alpha_{i}$. If $R$ is a difference ($\sigma$-) or an inversive
difference ($\sigma^{\ast}$-) field and $R_{0}$ a subfield of $R$,
which is also a $\sigma$- \,(respectively, $\sigma^{\ast}$-) subring
of $R$, then  $R_{0}$ is said to be a $\sigma$-\, (respectively,
$\sigma^{\ast}$-) subfield of $R$; $R$, in turn, is called a
difference or $\sigma$- (respectively, inversive difference or
$\sigma^{\ast}$-) field extension or a $\sigma$-(respectively,
$\sigma^{\ast}$-) overfield of $R_{0}$. In this case we also say
that we have a $\sigma$- \,(or $\sigma^{\ast}$-) field extension
$R/R_{0}$.

If $R$ is a difference ring with a basic set $\sigma$ and $J$ is an
ideal of the ring $R$ such that $\alpha(J)\subseteq J$ for any
$\alpha \in \sigma$, then $J$ is called a {\em difference\/} (or
$\sigma$-) ideal of $R$. If a prime (maximal) ideal $P$ of $R$ is
closed with respect to $\sigma$ (that is, $\alpha(P)\subseteq P$ for
any $\alpha \in \sigma$), it is called a {\em prime} (respectively,
{\em maximal}) {\em difference} (or $\sigma$-) {\em ideal \/} of the
$\sigma$-ring $R$.

A $\sigma$-ideal $J$ of a $\sigma$-ring $R$ is called {\em
reflexive} (or a $\sigma^{\ast}$-ideal) if for any translation
$\alpha$, the inclusion $\alpha(a)\in J$ ($a\in R$) implies $a\in
J$. If $R$ is inversive, then a reflexive $\sigma$-ideal (that is,
an ideal $J$ such that $\alpha(J) = J$ for any $\alpha\in
\sigma^{\ast}$) is also called a $\sigma^{\ast}$-ideal.

\medskip

If $R$ is a difference ring with a basic set $\sigma = \{\alpha_{1},
\dots, \alpha_{n}\}$, then $T_{\sigma}$ (or $T$ if the set $\sigma$
is fixed) will denote the free commutative semigroup generated by
$\alpha_{1}, \dots, \alpha_{n}$. Elements of $T_{\sigma}$ will be
written in the multiplicative form $\alpha_{1}^{k_{1}}\dots
\alpha_{m}^{k_{m}}$ ($k_{1},\dots, k_{m}\in \mathbb{N}$) and
considered as injective endomorphisms of $R$ (which are the
corresponding compositions of the endomorphisms of $\sigma$). If the
$\sigma$-ring $R$ is inversive, then $\Gamma_{\sigma}$ (or $\Gamma$
if the set $\sigma$ is fixed) will denote the free commutative group
generated by the set $\sigma$. It is clear that elements of the
group $\Gamma_{\sigma}$ (written in the multiplicative form
$\alpha_{1}^{i_{1}}\dots \alpha_{m}^{i_{m}}$ with $i_{1},\dots,
i_{n}\in \mathbb{Z}$) act on $R$ as automorphisms and $T_{\sigma}$
is a subsemigroup of $\Gamma_{\sigma}$.

For any $a\in R$ and for any $\tau \in T_{\sigma}$, the element
$\tau(a)$ is called a {\em transform\/} of $a$. If the $\sigma$-ring
$R$ is inversive, then an element $\gamma(a)$ ($a\in R, \gamma \in
\Gamma_{\sigma}$) is also called a transform of $a$. An element
$a\in R$ is said to be a {\em constant} if $\alpha(a) = a$ for every
$\alpha\in\sigma$.

If $J$ is a $\sigma$-ideal of a $\sigma$-ring $R$, then $J^{\ast} =
\{a\in R\, |\, \tau(a)\in J$ for some $\tau \in T_{\sigma}\}$ is a
reflexive $\sigma$-ideal of $R$ contained in any reflexive
$\sigma$-ideal of $R$ containing $J$. The ideal $J^{\ast}$ is called
the {\em reflexive closure\/} of the $\sigma$-ideal $J$.

Let $R$ be a difference ring with a basic set $\sigma$ and
$S\subseteq R$. Then the intersection of all $\sigma$-ideals of $R$
containing $S$ is denoted by $[S]$. Clearly, $[S]$ is the smallest
$\sigma$-ideal of $R$ containing $S$; as an ideal, it is generated
by the set  $T_{\sigma}S = \{\tau(a) | \tau \in T_{\sigma}, a\in
S\}$. If $J = [S]$, we say that the $\sigma$-ideal $J$ is generated
by the set $S$ called a {\em set of $\sigma$-generators} of $J$. If
$S$ is finite, $S=\{a_{1},\dots, a_{k}\}$, we write $J =
[a_{1},\dots, a_{k}]$ and say that $J$ is a finitely generated
$\sigma$-ideal of the $\sigma$-ring $R$. (In this case elements
$a_{1},\dots, a_{k}$ are said to be $\sigma$-generators of $J$.)

If $R$ is an inversive difference ($\sigma$-) ring and $S\subseteq
R$, then the inverse closure of the $\sigma$-ideal $[S]$ is denoted
by $[S]^{\ast}$. It is easy to see that $[S]^{\ast}$ is the smallest
$\sigma^{\ast}$-ideal of $R$ containing $S$; as an ideal, it is
generated by the set $\Gamma_{\sigma}S = \{\gamma(a) | \gamma \in
\Gamma_{\sigma}, a\in S\}$. If $S$ is finite, $S=\{a_{1},\dots,
a_{k}\}$, we write $[a_{1},\dots, a_{k}]^{\ast}$ for $I =
[S]^{\ast}$ and say that $I$ is a finitely generated
$\sigma^{\ast}$-ideal of $R$. (In this case, elements $a_{1},\dots,
a_{k}$ are said to be $\sigma^{\ast}$-generators of $I$.)

Let $R$ be a difference ring with a basic set $\sigma$, $R_{0}$ a
$\sigma$-subring of $R$ and $B\subseteq R$. The intersection of all
$\sigma$-subrings of $R$ containing $R_{0}$ and $B$ is called the
{\em $\sigma$-subring of $R$ generated by the set $B$ over $R_{0}$},
it is denoted by $R_{0}\{B\}$. (As a ring, $R_{0}\{B\}$ coincides
with the ring $R_{0}[\{\tau(b) | b\in B, \tau \in T_{\sigma}\}]$
obtained by adjoining the set $\{\tau(b) | b\in B, \tau \in
T_{\sigma}\}$ to the ring $R_{0}$). The set $B$ is said to be the
set of {\em $\sigma$-generators} of the $\sigma$-ring $R_{0}\{B\}$
over $R_{0}$. If this set is finite, $B = \{b_{1},\dots, b_{k}\}$,
we say that $R' = R_{0}\{B\}$ is a finitely generated difference (or
$\sigma$-) ring extension (or overring) of $R_{0}$ and write $R' =
R_{0}\{b_{1},\dots, b_{k}\}$. If $R$ is a $\sigma$-field, $R_{0}$ a
$\sigma$-subfield of $R$ and $B\subseteq R$, then the intersection
of all $\sigma$-subfields of $R$ containing $R_{0}$ and $B$ is
denoted by $R_{0}\langle B\rangle$ (or $R_{0}\langle b_{1},\dots,
b_{k}\rangle$ if $B=\{b_{1},\dots, b_{k}\}$ is a finite set). This
is the smallest $\sigma$-subfield of $R$ containing $R_{0}$ and $B$;
it coincides with the field $R_{0}(\{\tau(b) | b\in B, \tau \in
T_{\sigma}\})$. The set $B$ is called a set of {\em
$\sigma$-generators} of the $\sigma$-field $R_{0}\langle B\rangle$
over $R_{0}$.

Let $R$ be an inversive difference ring with a basic set $\sigma$,
$R_{0}$ a $\sigma^{\ast}$-subring of $R$ and $B\subseteq R$. Then
the intersection of all $\sigma^{\ast}$-subrings of $R$ containing
$R_{0}$ and $B$ is the smallest $\sigma^{\ast}$-subring of $R$
containing $R_{0}$ and $B$. This ring coincides with the ring
$R_{0}[\{\gamma(b) | b\in B, \gamma \in \Gamma_{\sigma}\}]$; it is
denoted by $R_{0}\{B\}^{\ast}$. The set $B$ is said to be a {\em set
of $\sigma^{\ast}$-generators} of $R_{0}\{B\}^{\ast}$ over $R_{0}$.
If $B = \{b_{1},\dots, b_{k}\}$ is a finite set, we say that $S =
R_{0}\{B\}^{\ast}$ is a finitely generated inversive difference (or
$\sigma^{\ast}$-) ring extension (or overring) of $R_{0}$ and write
$S = R_{0}\{b_{1},\dots, b_{k}\}^{\ast}$.

If $R$ is a $\sigma^{\ast}$-field, $R_{0}$ a
$\sigma^{\ast}$-subfield of $R$ and $B\subseteq R$, then the
intersection of all $\sigma^{\ast}$-subfields of $R$ containing
$R_{0}$ and $B$ is denoted by $R_{0}\langle B\rangle^{\ast}$. This
is the smallest $\sigma^{\ast}$-subfield of $R$ containing $R_{0}$
and $B$; it coincides with the field $R_{0}(\{\gamma(b) | b\in B,
\gamma \in \Gamma_{\sigma}\})$. The set $B$ is called a {\em set of
$\sigma^{\ast}$-generators of the $\sigma^{\ast}$-field extension}
$R_{0}\langle B\rangle^{\ast}$ {\em of} $R_{0}$.  If $B$ is finite,
$B=\{b_{1},\dots, b_{k}\}$, we write $R_{0}\langle b_{1},\dots,
b_{k}\rangle^{\ast}$ for $R_{0}\langle B\rangle^{\ast}$.

In what follows we often consider two or more difference rings
$R_{1},\dots, R_{p}$ with the same basic set
$\sigma=\{\alpha_{1},\dots, \alpha_{m}\}$. Formally speaking, it
means that for every $i=1,\dots, p$, there is some fixed mapping
$\nu_{i}$ from the set $\sigma$ into the set of all injective
endomorphisms of the ring $R_{i}$ such that any two endomorphisms
$\nu_{i}(\alpha_{j})$ and $\nu_{i}(\alpha_{k})$ of $R_{i}$ commute
($1\leq j, k\leq n$). We shall identify elements $\alpha_{j}$ with
their images $\nu_{i}(\alpha_{j})$ and say that elements of the set
$\sigma$ act as mutually commuting injective endomorphisms of the
ring $R_{i}$ ($i=1,\dots, p$).

Let $R_{1}$ and $R_{2}$ be difference rings with the same basic set
$\sigma=\{\alpha_{1},\dots, \alpha_{m}\}$. A ring homomorphism
$\phi: R_{1}\rightarrow R_{2}$ is called a {\em difference\/} (or
$\sigma$-) {\em homomorphism\/} if $\phi(\alpha(a)) =
\alpha(\phi(a))$ for any $\alpha \in \sigma, a\in R_{1}$. Clearly,
if $\phi: R_{1} \rightarrow R_{2}$ is a $\sigma$-homomorphism of
inversive difference rings, then $\phi(\alpha^{-1}(a)) =
\alpha^{-1}(\phi(a))$ for any $\alpha \in \sigma, \, a\in R_{1}$. If
a $\sigma$-homomorphism is an isomorphism (endomorphism,
automorphism, etc.), it is called a difference (or $\sigma$-)
isomorphism (respectively, difference (or $\sigma$-) endomorphism,
difference (or $\sigma$-) automorphism, etc.).  If $R_{1}$ and
$R_{2}$ are two $\sigma$-overrings of the same $\sigma$-ring $R_{0}$
and $\phi: R_{1}\rightarrow R_{2}$ is a $\sigma$-homomorphism such
that $\phi(a) = a$ for any $a\in R_{0}$, we say that $\phi$ is a
difference (or $\sigma$-) homomorphism over $R_{0}$ or that $\phi$
leaves the ring $R_{0}$ fixed.

It is easy to see that the kernel of any $\sigma$-homomorphism of
$\sigma$-rings $\phi: R\rightarrow R'$ is an inversive
$\sigma$-ideal of $R$. Conversely, let $g$ be a surjective
homomorphism of a $\sigma$-ring $R$ onto a ring $S$ such that $\Ker
g$ is a $\sigma^{\ast}$-ideal of $R$. Then there is a unique
structure of a $\sigma$-ring on $S$ such that $g$ is a
$\sigma$-homomorphism. In particular, if $I$ is a
$\sigma^{\ast}$-ideal of a $\sigma$-ring $R$, then the factor ring
$R/I$ has a unique structure of a $\sigma$-ring such that the
canonical surjection $R\rightarrow R/I$ is a $\sigma$-homomorphism.
In this case $R/I$ is said to be the {\em difference} (or $\sigma$-)
{\em factor ring} of $R$ by the $\sigma^{\ast}$-ideal $I$.

If a difference (inversive difference) ring $R$ with a basic set
$\sigma$ is an integral domain, then its quotient field $Q(R)$ can
be naturally considered as a $\sigma$-(respectively,
$\sigma^{\ast}$-) overring of $R$. (We identify an element $a\in R$
with its canonical image $\frac{a}{1}$ in $Q(R)$.) In this case
$Q(R)$ is said to be the {\em quotient difference} (or quotient
$\sigma$-) {\em field} of $R$. Clearly, if the $\sigma$-ring $R$ is
inversive, then its quotient $\sigma$-field $Q(R)$ is also
inversive. Furthermore, if a $\sigma$-field $K$ contains an integral
domain $R$ as a $\sigma$-subring, then $K$ contains the quotient
$\sigma$-field $Q(R)$.\\

Let $R$ be a difference ring with a basic set $\sigma =
\{\alpha_{1}, \dots, \alpha_{m}\}$, $T_{\sigma}$ the free
commutative semigroup generated by $\sigma$, and $U = \{u_{\lambda}
| \lambda \in \Lambda\}$ a family of elements from some
$\sigma$-overring of $R$. We say that the family $U$ is {\em
transformally} (or $\sigma$-{\em algebraically) dependent} over $R$,
if the family $T_{\sigma}(U) = \{\tau(u_{\lambda}) | \tau \in
T_{\sigma}, \lambda \in \Lambda\}$ is algebraically dependent over
$R$ (that is, there exist elements $v_{1},\dots, v_{k}\in
T_{\sigma}(U)$ and a non-zero polynomial $f(X_{1},\dots, X_{k})$
with coefficients in $R$ such that $f(v_{1},\dots, v_{k}) = 0$).
Otherwise, the family $U$ is said to be {\em transformally} (or
$\sigma$-{\em algebraically) independent} over $R$ or a family of
{\em difference} (or $\sigma$-) {\em indeterminates} over $R$.  In
the last case, the $\sigma$-ring $R\{(u_{\lambda})_{\lambda \in
\Lambda}\}$ is called the {\em algebra of difference} (or $\sigma$-)
{\em polynomials} in the difference (or $\sigma$-) indeterminates
$\{(u_{\lambda})_{\lambda \in \Lambda}\}$ over $R$. If a family
consisting of one element $u$ is $\sigma$-algebraically dependent
over $R$, the element $u$ is said to be {\em transformally
algebraic} (or {\em $\sigma$-algebraic}) over the $\sigma$-ring $R$.
If the set $\{\tau(u) | \tau \in T_{\sigma}\}$ is algebraically
independent over $R$, we say that $u$ is {\em transformally} (or
$\sigma$-) {\em transcendental} over the ring $R$.

Let $R$ be a $\sigma$-field, $L$ a $\sigma$-overfield of $R$, and
$S\subseteq L$. We say that {\em the set $S$ is $\sigma$-algebraic
over $R$} if every element $a\in S$ is $\sigma$-algebraic over $R$.
If every element of the field $L$ is $\sigma$-algebraic over $R$, we
say that $L$ is a {\em $\sigma$-algebraic field extension} of the
$\sigma$-field $R$.

\begin{proposition}  {\em (\cite[Proposition 3.3.7]{KLMP})}.
Let $R$ be a difference ring with
a basic set $\sigma$ and $I$ an arbitrary set. Then there exists an
algebra of $\sigma$-polynomials over $R$ in a family of
$\sigma$-indeterminates with indices from the set $I$. If $S$ and
$S'$ are two such algebras, then there exists a $\sigma$-isomorphism
$S\rightarrow S'$ that leaves the ring $R$ fixed. If $R$ is an
integral domain, then any algebra of $\sigma$-polynomials over $R$
is an integral domain.
\end{proposition}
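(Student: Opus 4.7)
The plan is to prove all three claims by explicit construction and a standard universal-property argument.

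For existence, I would take a family $\{X_{i,\tau}\}_{i\in I,\,\tau\in T_{\sigma}}$ of algebraically independent indeterminates over $R$ and form the ordinary polynomial ring $S = R[X_{i,\tau}\mid i\in I,\,\tau\in T_{\sigma}]$. For each $\alpha_{j}\in\sigma$ I define an endomorphism $\bar{\alpha}_{j}$ of $S$ that extends $\alpha_{j}$ on $R$ and sends $X_{i,\tau}\mapsto X_{i,\alpha_{j}\tau}$. The universal property of polynomial rings guarantees that $\bar{\alpha}_{j}$ exists and is unique. Commutativity of the $\bar{\alpha}_{j}$'s is immediate from the commutativity of $T_{\sigma}$ (applied to the second index) together with the commutativity of the $\alpha_{j}$'s on $R$. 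Injectivity follows because $\bar{\alpha}_{j}$ maps the monomial basis of $S$ as an $R$-module injectively into itself (the shift $\tau\mapsto\alpha_{j}\tau$ is injective on $T_{\sigma}$, as $T_{\sigma}$ is a free commutative semigroup) while acting on coefficients by the injective endomorphism $\alpha_{j}$. Setting $u_{i} := X_{i,e}$, where $e\in T_{\sigma}$ is the identity, we get $\tau(u_{i}) = X_{i,\tau}$, so the family $\{u_{i}\}_{i\in I}$ is $\sigma$-algebraically independent over $R$ by construction, and $S = R\{(u_{i})_{i\in I}\}$.

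For the uniqueness up to $\sigma$-isomorphism, suppose $S' = R\{(u'_{i})_{i\in I}\}$ is a second such algebra, with $\{u'_{i}\}$ $\sigma$-algebraically independent over $R$. Because $\{\tau(u'_{i})\mid i\in I,\tau\in T_{\sigma}\}$ is algebraically independent over $R$, the universal property of polynomial rings yields a unique $R$-algebra homomorphism $\phi: S\to S'$ with $X_{i,\tau}\mapsto \tau(u'_{i})$, i.e.\ $\tau(u_{i})\mapsto\tau(u'_{i})$. To verify that $\phi$ is a $\sigma$-homomorphism it suffices, by $R$-linearity and multiplicativity, to check the identity $\phi(\alpha_{j}(\tau(u_{i}))) = \alpha_{j}(\phi(\tau(u_{i})))$ on the generators; both sides equal $(\alpha_{j}\tau)(u'_{i})$. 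A symmetric construction produces an inverse $\sigma$-homomorphism $\psi: S'\to S$, and $\psi\phi$ and $\phi\psi$ act as the identity on the respective sets of $\sigma$-generators, hence on the whole rings.

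The integral-domain claim reduces, via the concrete model constructed above, to the well-known fact that a polynomial ring over an integral domain in an arbitrary family of indeterminates is again an integral domain, combined with the uniqueness already established.

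The only real subtlety, which I view as the main (mild) obstacle, is the injectivity of the endomorphisms $\bar{\alpha}_{j}$ on the full polynomial ring $S$ when $I$ is infinite; once one observes that $\bar{\alpha}_{j}$ sends the $R$-module monomial basis injectively into itself and acts on $R$-coefficients through the injective map $\alpha_{j}$, the rest of the argument is bookkeeping with the universal property.
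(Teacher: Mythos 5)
Your construction of $S$ as the polynomial ring $R[X_{i,\tau}]_{i\in I,\tau\in T_{\sigma}}$ with the translations extended by $X_{i,\tau}\mapsto X_{i,\alpha_j\tau}$ is exactly the construction the paper gives immediately after the proposition (the paper itself defers the remaining details to \cite[Proposition 3.3.7]{KLMP}), and your universal-property argument for uniqueness and the reduction of the integral-domain claim to the corresponding fact for ordinary polynomial rings are the standard completions of that approach. The proposal is correct and essentially the same as the paper's.
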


The algebra of $\sigma$-polynomials over the $\sigma$-ring $R$ can
be constructed as follows. Let $T = T_{\sigma}$ and let $S$ be the
polynomial $R$-algebra in the set of indeterminates $\{y_{i,
\tau}\}_{i\in I, \tau \in T}$ with indices from the set $I\times T$.
For any $f\in S$ and $\alpha \in \sigma$, let $\alpha(f)$ denote the
polynomial from $S$ obtained by replacing every indeterminate $y_{i,
\tau}$ that appears in $f$ by $y_{i, \alpha \tau}$ and every
coefficient $a\in R$ by $\alpha(a)$. We obtain an injective
endomorphism $S\rightarrow S$ that extends the original endomorphism
$\alpha$ of $R$ to the ring $S$ (this extension is denoted by the
same letter $\alpha$). Setting $y_{i}=y_{i, 1}$ (where $1$ denotes
the identity of the semigroup $T$) we obtain a set $\{y_{i} | i\in
I\}$ whose elements are $\sigma$-algebraically independent over $R$
and generate $S$ as a $\sigma$-ring extension of $R$. Thus, $S =
R\{(y_{i})_{i\in I}\}$ is an algebra of $\sigma$-polynomials over
$R$ in a family of $\sigma$-indeterminates $\{y_{i} | i\in I\}$.

\medskip

Let $R$ be an inversive difference ring with a basic set $\sigma$,
$\Gamma = \Gamma_{\sigma}$, $I$ a set, and $S^{\ast}$ a polynomial
ring in the set of indeterminates $\{y_{i, \gamma}\}_{i\in I, \gamma
\in \Gamma}$ with indices from the set $I\times \Gamma$. If we
extend the automorphisms $\beta \in \sigma^{\ast}$ to $S^{\ast}$
setting $\beta(y_{i, \gamma}) = y_{i, \beta \gamma}$ for any $y_{i,
\gamma}$ and denote $y_{i, 1}$ by $y_{i}$, then $S^{\ast}$ becomes
an inversive difference overring of $R$ generated (as a
$\sigma^{\ast}$-overring) by the family $\{(y_{i})_{i\in I}\}$.
Obviously, this family is {\em $\sigma^{\ast}$-algebraically
independent} over $R$, that is, the set $\{\gamma(y_{i}) | \gamma
\in \Gamma, i\in I\}$ is algebraically independent over $R$. (Note
that a set is $\sigma^{\ast}$-algebraically dependent (independent)
over an inversive $\sigma$-ring if and only if this set is
$\sigma$-algebraically dependent (respectively, independent) over
this ring.)  The ring $S^{\ast} = R\{(y_{i})_{i\in I}\}^{\ast}$ is
called the {\em algebra of inversive difference} (or
$\sigma^{\ast}$-) {\em polynomials} over $R$ in the set of
$\sigma^{\ast}$-indeterminates $\{(y_{i})_{i\in I}\}$. It is easy to
see that $S^{\ast}$ is the inversive closure of the ring of
$\sigma$-polynomials $S=R\{(y_{i})_{i\in I}\}$ over $R$ in the sense
that $S^{\ast}$ is the smallest inversive $\sigma$-overring of $S$
with the following property: for every $a\in S^{\ast}$, there exists
$\tau\in T_{\sigma}$ such that $\tau(a)\in S$. Furthermore, if a
family $\{(u_{i})_{i\in I}\}$ from some $\sigma^{\ast}$-overring of
$R$ is {\em $\sigma^{\ast}$-algebraically independent} over $R$,
then the inversive difference ring $R\{(u_{i})_{i\in I}\}^{\ast}$ is
naturally  $\sigma$-isomorphic to $S^{\ast}$. Any such overring
$R\{(u_{i})_{i\in I}\}^{\ast}$ is said to be an algebra of inversive
difference (or $\sigma^{\ast}$-) polynomials over $R$ in the set of
$\sigma^{\ast}$-indeterminates $\{(u_{i})_{i\in I}\}$. We obtain the
following analog of Proposition 2.1.

\begin{proposition} {\em (\cite[Proposition 3.4.4]{KLMP})}.
Let $R$ be an inversive difference ring with a basic set $\sigma$
and $I$ an arbitrary set. Then there exists an algebra of
$\sigma^{\ast}$-polynomials over $R$ in a family of
$\sigma^{\ast}$-indeterminates with indices from the set $I$. If $S$
and $S'$ are two such algebras, then there exists a
$\sigma^{\ast}$-isomorphism $S\rightarrow S'$ that leaves the ring
$R$ fixed. If $R$ is an integral domain, then any algebra of
$\sigma^{\ast}$-polynomials over $R$ is an integral domain.
\end{proposition}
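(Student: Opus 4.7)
The plan is to follow the explicit construction of $S^{\ast}$ already sketched in the paragraph immediately preceding the proposition, and to verify the three claims (existence, uniqueness, and the integral domain property) by carefully reducing them to the corresponding statements about ordinary polynomial rings over $R$.

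For existence, I would take $S^{\ast}$ to be the polynomial $R$-algebra in the family of indeterminates $\{y_{i,\gamma}\}_{i\in I,\,\gamma\in\Gamma_{\sigma}}$. For each $\beta\in\sigma^{\ast}$, the rule $y_{i,\gamma}\mapsto y_{i,\beta\gamma}$, together with the action of $\beta$ on coefficients in $R$, extends by the universal property of polynomial rings to a ring endomorphism $\bar\beta$ of $S^{\ast}$. Because $\gamma\mapsto\beta\gamma$ is a bijection of $\Gamma_{\sigma}$ (as $\Gamma_{\sigma}$ is a group) and $\beta$ is an automorphism of $R$, the endomorphism $\bar\beta$ is in fact an automorphism, with inverse obtained from $\beta^{-1}$. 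Commutativity of the $\bar\alpha_{j}$ on $S^{\ast}$ follows from commutativity of $\Gamma_{\sigma}$ together with commutativity of $\sigma^{\ast}$ on $R$. Setting $y_{i}=y_{i,1}$, the set $\{\gamma(y_{i})\mid \gamma\in\Gamma_{\sigma},\,i\in I\}$ coincides with the algebraically independent set $\{y_{i,\gamma}\}$, which is precisely the $\sigma^{\ast}$-algebraic independence of $\{y_{i}\}_{i\in I}$ over $R$. Thus $S^{\ast}=R\{(y_{i})_{i\in I}\}^{\ast}$ is an algebra of $\sigma^{\ast}$-polynomials in the desired sense.

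For uniqueness, let $S=R\{(u_{i})_{i\in I}\}^{\ast}$ and $S'=R\{(u'_{i})_{i\in I}\}^{\ast}$ be two such algebras. By the $\sigma^{\ast}$-algebraic independence of $\{u_{i}\}$ over $R$, we have $S=R[\{\gamma(u_{i})\mid\gamma\in\Gamma_{\sigma},\,i\in I\}]$ as a polynomial ring, and similarly for $S'$. The universal property of polynomial rings then yields a unique $R$-algebra homomorphism $\phi:S\to S'$ with $\phi(\gamma(u_{i}))=\gamma(u'_{i})$. To see $\phi$ is a $\sigma$-homomorphism, I would check the identity $\phi(\beta(z))=\beta(\phi(z))$ on each generator $\gamma(u_{i})$, where both sides equal $(\beta\gamma)(u'_{i})$, and extend by the ring homomorphism property. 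The symmetric construction with the roles of $S$ and $S'$ reversed yields a two-sided inverse, so $\phi$ is a $\sigma^{\ast}$-isomorphism fixing $R$.

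The integral domain statement is immediate: when $R$ is an integral domain, the polynomial ring $R[\{y_{i,\gamma}\}]$ over any index set is an integral domain, and by the uniqueness part every algebra of $\sigma^{\ast}$-polynomials is $\sigma^{\ast}$-isomorphic to this concrete model. The main bookkeeping obstacle, and really the only nontrivial point, is verifying that the maps $\bar\beta$ on $S^{\ast}$ genuinely satisfy the defining relations of $\sigma^{\ast}$ (pairwise commutativity and $\bar{\alpha}_{j}\,\bar{\alpha}_{j}^{-1}=\mathrm{id}$), which amounts to checking these relations on the generating indeterminates $y_{i,\gamma}$ using the group structure of $\Gamma_{\sigma}$; everything else is routine transport of Proposition 2.1 across the inversive closure.
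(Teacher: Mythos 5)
Your proposal is correct and follows essentially the same route as the paper, which proves the proposition by exactly this construction: the polynomial ring in the indeterminates $y_{i,\gamma}$ indexed by $I\times\Gamma_{\sigma}$, with each $\beta\in\sigma^{\ast}$ extended via $y_{i,\gamma}\mapsto y_{i,\beta\gamma}$, uniqueness via the universal property of polynomial rings, and the integral domain claim reduced to the corresponding fact for ordinary polynomial rings. The verifications you flag (that the extended maps are commuting automorphisms, using that $\gamma\mapsto\beta\gamma$ is a bijection of the group $\Gamma_{\sigma}$) are precisely the routine checks implicit in the paper's sketch preceding the proposition.
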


Let $R$ be a $\sigma$-ring, $R\{(y_{i})_{i\in I}\}$ an algebra of
difference polynomials in a family of $\sigma$-indeterminates
$\{(y_{i})_{i\in I}\}$, and $\{(\eta_{i})_{i\in I}\}$  a set of
elements from some $\sigma$-overring of $R$. Since the set
$\{\tau(y_{i}) | i\in I, \tau \in T_{\sigma}\}$ is algebraically
independent over $R$, there exists a unique ring homomorphism
$\phi_{\eta}: R[\tau(y_{i})_{i\in I, \tau \in
T_{\sigma}}]\rightarrow R[\tau(\eta_{i})_{i\in I, \tau \in
T_{\sigma}}]$ that maps every $\tau(y_{i})$ onto $\tau(\eta_{i})$
and leaves $R$ fixed. Clearly, $\phi_{\eta}$ is a surjective
$\sigma$-homomorphism of $R\{(y_{i})_{i\in I}\}$ onto
$R\{(\eta_{i})_{i\in I}\}$; it is called the {\em substitution} of
$(\eta_{i})_{i\in I}$ for $(y_{i})_{i\in I}$. Similarly, if $R$ is
an inversive $\sigma$-ring, $R\{(y_{i})_{i\in I}\}^{\ast}$ an
algebra of $\sigma^{\ast}$-polynomials over $R$ and
$(\eta_{i})_{i\in I}$ a family of elements from a
$\sigma^{\ast}$-overring of $R$, one can define a surjective
$\sigma$-homomorphism $R\{(y_{i})_{i\in I}\}^{\ast}\rightarrow
R\{(\eta_{i})_{i\in I}\}^{\ast}$ that maps every $y_{i}$ onto
$\eta_{i}$ and leaves the ring $R$ fixed. This homomorphism is also
called the substitution of $(\eta_{i})_{i\in I}$ for $(y_{i})_{i\in
I}$. (It will be always clear whether we talk about substitutions
for difference or inversive difference polynomials.) If $g$ is a
$\sigma$- or $\sigma^{\ast}$- polynomial, then its image under a
substitution of $(\eta_{i})_{i\in I}$ for $(y_{i})_{i\in I}$ is
denoted by $g((\eta_{i})_{i\in I})$. The kernel of a substitution
$\phi_{\eta}$ is an inversive difference ideal of the $\sigma$-ring
$R\{(y_{i})_{i\in I}\}$ (or the $\sigma^{\ast}$-ring
$R\{(y_{i})_{i\in I}\}^{\ast}$); it is called the {\em defining
difference} (or $\sigma$-) {\em ideal} of the family
$(\eta_{i})_{i\in I}$ over $R$. If $R$ is a $\sigma$- (or
$\sigma^{\ast}$-) field and $(\eta_{i})_{i\in I}$ is a family of
elements from some $\sigma$- (respectively, $\sigma^{\ast}$-)
overfield $S$, then $R\{(\eta_{i})_{i\in I}\}$ (respectively,
$R\{(\eta_{i})_{i\in I}\}^{\ast}$) is an integral domain (it is
contained in the field $S$). It follows that the defining
$\sigma$-ideal $P$ of the family  $(\eta_{i})_{i\in I}$ over $R$ is
a reflexive prime difference ideal of the ring $R\{(y_{i})_{i\in
I}\}$ (respectively, of the ring of $\sigma^{\ast}$-polynomials
$R\{(y_{i})_{i\in I}\}^{\ast}$). Therefore, the difference field
$R\langle(\eta_{i})_{i\in I}\rangle$ can be treated as the quotient
$\sigma$-field of the $\sigma$-ring $R\{(y_{i})_{i\in I}\}/P$. (In
the case of inversive difference rings, the $\sigma^{\ast}$-field
$R\langle(\eta_{i})_{i\in I}\rangle^{\ast}$ can be considered as a
quotient $\sigma$-field of the $\sigma^{\ast}$-ring
$R\{(y_{i})_{i\in I}\}^{\ast}/P$.)

\medskip

Let $K$ be a difference field with a basic set $\sigma$ and $n$ a
positive integer. By an {\em $n$-tuple over} $K$ we mean an
$n$-dimensional vector $a = (a_{1},\dots, a_{n})$ whose coordinates
belong to some $\sigma$-overfield of $K$. If the $\sigma$-field $K$
is inversive, the coordinates of an $n$-tuple over $K$ are supposed
to lie in some $\sigma^{\ast}$-overfield of $K$. If each $a_{i}$
($1\leq i\leq n$) is $\sigma$-algebraic over the $\sigma$-field $K$,
we say that the {\em $n$-tuple $a$ is $\sigma$-algebraic over $K$}.

\begin{definition} Let $K$ be a difference (inversive
difference) field with a basic set $\sigma$ and let $R$ be the
algebra of $\sigma$- (respectively, $\sigma^{\ast}$-) polynomials in
finitely many $\sigma$- (respectively, $\sigma^{\ast}$-)
indeterminates $y_{1},\dots, y_{n}$ over $K$. Furthermore, let $\Phi
= \{f_{j} | j\in J\}$ be a set of $\sigma$- (respectively,
$\sigma^{\ast}$-) polynomials from $R$. An $n$-tuple $\eta =
(\eta_{1},\dots, \eta_{n})$ over $K$ is said to be a solution of the
set $\Phi$ or a solution of the system of difference algebraic
equations  $f_{j}(y_{1},\dots, y_{n}) = 0$ ($j\in J$) if $\Phi$ is
contained in the kernel of the substitution of $(\eta_{1},\dots,
\eta_{s})$ for $(y_{1},\dots, y_{n})$. In this case we also say that
$\eta$ annuls $\Phi$. (If $\Phi$ is a subset of a ring of inversive
difference polynomials, the system is said to be a system of
algebraic $\sigma^{\ast}$-equations.) A system of algebraic
difference equations $\Phi$ is called prime if the reflexive
difference ideal generated by $\Phi$ in the ring of $\sigma$ (or
$\sigma^{\ast}$-) polynomials is prime.
\end{definition}

As we have seen, if one fixes an $n$-tuple $\eta = (\eta_{1},\dots,
\eta_{n})$ over a $\sigma$-field $F$, then all $\sigma$-polynomials
of the ring $K\{y_{1},\dots, y_{n}\}$, for which $\eta$ is a
solution, form a reflexive prime difference ideal, the {\em defining
$\sigma$-ideal} of $\eta$. If $\eta$ is an $n$-tuple over a
$\sigma^{\ast}$-field $K$, then all $\sigma^{\ast}$-polynomials $g$
of the ring $K\{y_{1},\dots, y_{n}\}^{\ast}$ such that
$g(\eta_{1},\dots, \eta_{n}) = 0$ form a prime $\sigma^{\ast}$-ideal
of $K\{y_{1},\dots, y_{n}\}^{\ast}$ called the {\em defining
$\sigma^{\ast}$-ideal} of $\eta$ over $K$.

Let $\Phi$ be a subset of the algebra of $\sigma$-polynomials
$K\{y_{1},\dots, y_{n}\}$ over a $\sigma$-field $K$. An $n$-tuple
$\eta = (\eta_{1},\dots, \eta_{n})$ over $K$ is called a {\em
generic zero} of $\Phi$ if for any $\sigma$-polynomial $f\in
K\{y_{1},\dots, y_{n}\}$, the inclusion $f\in \Phi$ holds if and
only if $f(\eta_{1},\dots, \eta_{n})=0$. If the $\sigma$-field $K$
is inversive, then the notion of a generic zero of a subset of
$K\{y_{1},\dots, y_{n}\}^{\ast}$ is defined similarly.

Two $n$-tuples $\eta = (\eta_{1},\dots, \eta_{n})$ and $\zeta =
(\zeta_{1},\dots, \zeta_{n})$ over a $\sigma$- (or $\sigma^{\ast}$-)
field $K$ are called {\em equivalent} over $K$ if there is a
$\sigma$-homomorphism $K\langle \eta_{1},\dots, \eta_{n}\rangle
\rightarrow K\langle \zeta_{1},\dots, \zeta_{n}\rangle$
(respectively, $K\langle \eta_{1},\dots, \eta_{n}\rangle^{\ast}
\rightarrow K\langle \zeta_{1},\dots, \zeta_{n}\rangle^{\ast}$) that
maps each $\eta_{i}$ onto $\zeta_{i}$ and leaves the field $K$
fixed.

\begin{proposition}{\em (\cite[Proposition 3.3.7]{KLMP})}. Let $R$ denote the algebra of
$\sigma$-polynomials $K\{y_{1},\dots, y_{s}\}$ over a difference
 field $K$ with a basic set $\sigma$.

{\em (i)}\, A set $\Phi \subsetneqq R$ has a generic zero if and
only if $\Phi$ is a prime reflexive $\sigma$-ideal of $R$. If
$(\eta_{1},\dots, \eta_{n})$ is a generic zero of $\Phi$, then
$K\langle \eta_{1},\dots, \eta_{n}\rangle$ is $\sigma$-isomorphic to
the quotient $\sigma$-field of $R/\Phi$.

{\em (ii)}\, Any $n$-tuple over $K$ is a generic zero of some prime
reflexive $\sigma$-ideal of $R$.

{\em (iii)}\, If two $n$-tuples over $K$ are generic zeros of the
same prime reflexive $\sigma$-ideal of $R$, then these $n$-tuples
are equivalent.
\end{proposition}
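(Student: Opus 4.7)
The plan is to exploit the correspondence, already set up in the preliminaries, between $n$-tuples over $K$ and substitution $\sigma$-homomorphisms out of $R = K\{y_{1},\dots,y_{n}\}$. Recall from the discussion preceding the proposition that the kernel of any such substitution $\phi_{\eta}$ is a reflexive prime $\sigma$-ideal (it is reflexive because the kernel of a $\sigma$-homomorphism is reflexive, and prime because $K\{\eta_{1},\dots,\eta_{n}\}$ sits inside the field $K\langle\eta_{1},\dots,\eta_{n}\rangle$). This is the defining $\sigma$-ideal of $\eta$, and the map $\phi_{\eta}$ factors to a $\sigma$-isomorphism $R/\ker\phi_{\eta}\stackrel{\sim}{\to}K\{\eta_{1},\dots,\eta_{n}\}$ sending $y_{i}+\ker\phi_{\eta}$ to $\eta_{i}$. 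Passing to quotient $\sigma$-fields then identifies $K\langle\eta_{1},\dots,\eta_{n}\rangle$ with $Q(R/\ker\phi_{\eta})$.

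For (i), the ``only if'' direction is immediate: if $\eta$ is a generic zero of $\Phi$ then by definition $\Phi = \ker\phi_{\eta}$, so $\Phi$ is a reflexive prime $\sigma$-ideal by the observation above, and the quoted $\sigma$-isomorphism gives the second assertion. For the ``if'' direction, suppose $\Phi\subsetneqq R$ is a prime reflexive $\sigma$-ideal. Then $R/\Phi$ has a unique $\sigma$-ring structure making the canonical surjection a $\sigma$-homomorphism (by the general fact quoted earlier about factoring by $\sigma^{\ast}$-ideals), and it is an integral domain, so it admits a quotient $\sigma$-field $F$. Take $\eta_{i}$ to be the image of $y_{i}$ in $F$; then by construction $\phi_{\eta}$ factors through $R/\Phi$, so $\Phi\subseteq\ker\phi_{\eta}$, and conversely any $f\in\ker\phi_{\eta}$ has image $0$ in $R/\Phi$, so $f\in\Phi$. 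Thus $\Phi=\ker\phi_{\eta}$, i.e., $\eta$ is a generic zero of $\Phi$, and the isomorphism statement is the one already displayed.

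Part (ii) is then a direct restatement: given any $n$-tuple $\eta$, its defining $\sigma$-ideal $\ker\phi_{\eta}$ is a prime reflexive $\sigma$-ideal by the initial observation, and $\eta$ is tautologically a generic zero of it. For part (iii), if $\eta$ and $\zeta$ are both generic zeros of the same prime reflexive $\sigma$-ideal $\Phi$, then the $\sigma$-isomorphisms from (i) give $\sigma$-isomorphisms $K\langle\eta_{1},\dots,\eta_{n}\rangle\cong Q(R/\Phi)\cong K\langle\zeta_{1},\dots,\zeta_{n}\rangle$ over $K$ sending $\eta_{i}\mapsto y_{i}+\Phi\mapsto\zeta_{i}$; their composition is the desired $\sigma$-homomorphism (in fact a $\sigma$-isomorphism).

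There is no genuine obstacle here; the content is a formal transcription of the standard generic-point dictionary from commutative algebra into the difference setting. The only point that requires slight care is verifying that the two conditions ``prime'' and ``reflexive'' on $\Phi$ correspond exactly to the two conditions ``$R/\Phi$ is a $\sigma$-domain'' (so a quotient $\sigma$-field exists) and ``the canonical surjection is a $\sigma$-homomorphism,'' both of which are guaranteed by the results cited in the preliminaries. The case of inversive difference rings, although not separately stated, is handled by the same argument using $R = K\{y_{1},\dots,y_{n}\}^{\ast}$ and the fact that a $\sigma^{\ast}$-ideal is automatically reflexive.
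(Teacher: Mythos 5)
Your proof is correct and is essentially the standard generic-point argument that the paper relies on by citation to \cite[Proposition 3.3.7]{KLMP}: the dictionary between $n$-tuples, substitution $\sigma$-homomorphisms, and their kernels, with reflexivity used exactly where needed to make $R/\Phi$ a $\sigma$-domain with injective induced endomorphisms. The paper itself gives no independent proof, so there is nothing further to compare.
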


{\bf 2.2. Numerical polynomials of subsets of $\mathbb{N}^{m}$ and
$\mathbb{Z}^{m}$}.
\begin{definition}
A polynomial $f(t)$ in one variable $t$ with rational coefficients is called
numerical if $f(r)\in \mathbb{Z}$ for all sufficiently large $r\in\mathbb{Z}$.
\end{definition}

Of course, every polynomial with integer coefficients is numerical.
As an example of a numerical polynomial with non-integer
coefficients one can consider a polynomial $\D{t\choose k}$ \, where
$k\in\mathbb{N}$. (As usual, $\D{t\choose k}$ ($k\geq 1$) denotes
the polynomial $\D\frac{t(t-1)\dots (t-k+1)}{k!}$, $\D{t\choose0} =
1$, and $\D{t\choose k} = 0$ if $k < 0$.)

The following theorem proved in ~\cite[Chapter 0, section 17]{Kolchin2} gives the ``canonical'' representation of a numerical polynomial.

\begin{theorem}
Let $f(t)$ be a numerical polynomial of degree $d$. Then $f(t))$ can
be represented in the form

\begin{equation}
f(t) =\D\sum_{i=0}^{d}a_{i}{{t+i}\choose i}
\end{equation}
with uniquely defined integer coefficients $a_{i}$.
\end{theorem}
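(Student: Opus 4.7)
My plan is to exhibit the $\binom{t+i}{i}$ as a basis of $\mathbb{Q}[t]$ (which handles uniqueness and existence with rational $a_i$) and then extract integrality by induction on degree using a finite-difference operator.

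First, I would observe that for each $i \in \mathbb{N}$ the polynomial $\binom{t+i}{i}$ has degree exactly $i$ and leading coefficient $1/i!$, so the family $\{\binom{t+i}{i}\}_{i=0}^{d}$ is a $\mathbb{Q}$-basis of the space of polynomials in $\mathbb{Q}[t]$ of degree $\le d$. Consequently any $f(t)\in\mathbb{Q}[t]$ of degree $d$ admits a unique expansion
\begin{equation*}
f(t) = \sum_{i=0}^{d} a_{i}\binom{t+i}{i}, \qquad a_{i}\in\mathbb{Q},\ a_{d}\neq 0.
\end{equation*}
Thus only the integrality of the $a_{i}$ remains, and uniqueness follows for free once we know the coefficients lie in $\mathbb{Z}$.

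Next, I would introduce the backward-difference operator $\nabla$ defined by $(\nabla g)(t) = g(t) - g(t-1)$. If $g$ is numerical, so is $\nabla g$, since $\nabla g$ takes integer values at every sufficiently large integer. The key combinatorial identity is
\begin{equation*}
\nabla\binom{t+i}{i} \;=\; \binom{t+i}{i}-\binom{t+i-1}{i} \;=\; \binom{t+i-1}{i-1},
\end{equation*}
which is Pascal's rule. Applying $\nabla$ term-by-term to the expansion of $f$ above,
\begin{equation*}
\nabla f(t) \;=\; \sum_{i=1}^{d} a_{i}\binom{t+i-1}{i-1} \;=\; \sum_{j=0}^{d-1} a_{j+1}\binom{t+j}{j},
\end{equation*}
a representation of the numerical polynomial $\nabla f$ of degree $d-1$ in the same canonical form, but with coefficients $(a_{1},\dots,a_{d})$.

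Finally I would induct on $d$. The base case $d=0$ says that $f=a_{0}$ is a constant which takes integer values at all large integers, so $a_{0}\in\mathbb{Z}$. For the inductive step, the displayed identity shows that the induction hypothesis applied to $\nabla f$ gives $a_{1},\dots,a_{d}\in\mathbb{Z}$; then
\begin{equation*}
a_{0} \;=\; f(t) - \sum_{i=1}^{d} a_{i}\binom{t+i}{i}
\end{equation*}
is a constant polynomial whose value at any large integer $t$ is a difference of integers, hence $a_{0}\in\mathbb{Z}$. There is no real obstacle here: the only thing to verify carefully is the Pascal identity above, and one must check that the numerical property is preserved by $\nabla$, which is immediate from the definition. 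The proof is thus a short induction once the basis $\{\binom{t+i}{i}\}$ and the operator $\nabla$ are brought in.
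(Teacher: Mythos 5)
Your proof is correct, and it is essentially the standard argument: the paper gives no proof of its own but cites Kolchin (Chapter 0, Section 17), where the same strategy is used --- expand in the basis $\binom{t+i}{i}$ over $\mathbb{Q}$, then obtain integrality of the coefficients by induction on the degree via a difference operator and Pascal's rule (Kolchin uses the forward difference $f(t+1)-f(t)$ rather than your backward one, an immaterial variation).
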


In what follows (until the end of the section), we deal with subsets of
$\mathbb{Z}^{m}$ ($m$ is a positive integer). If $a = (a_{1},\dots, a_{m})\in \mathbb{Z}^{m}$,
then the number $\sum_{i=1}^{m}a_{i}$ will be called the {\em order} of
the $m$-tuple $a$; it is denoted by $\ord a$. Furthermore, the
set $\mathbb{Z}^{m}$ will be considered as the union
\begin{equation}
\mathbb{Z}^{n} = \bigcup_{1\leq j\leq 2^{m}}\mathbb{Z}_{j}^{(m)}
\end{equation}
where $\mathbb{Z}_{1}^{(m)}, \dots, \mathbb{Z}_{2^{m}}^{(m)}$ are
all distinct Cartesian products of $m$ sets each of which is either
$\mathbb{N}$ or ${\bf Z_{-}}=\{a\in \mathbb{Z}|a\leq 0\}$. We assume
that $\mathbb{Z}_{1}^{(m)} = \mathbb{N}^{m}$ and call
$\mathbb{Z}_{j}^{(m)}$ the {\em $j$th orthant} of the set
$\mathbb{Z}^{m}$ ($1\leq j\leq 2^{m}$).

The set $\mathbb{Z}^{m}$ will be considered as a partially ordered set with
the order $\unlhd$ such that $(e_{1},\dots, e_{m})\unlhd (e'_{1},\dots, e'_{m})$ if
and only if $(e_{1},\dots, e_{m})$ and $(e'_{1},\dots, e'_{m})$ belong to the
same orthant $\mathbb{Z}_{k}^{(m)}$ and the $m$-tuple $(|e_{1}|,\dots, |e_{m}|)$
is less than $(|e'_{1}|,\dots, |e'_{m}|)$ with respect to the product order on $\mathbb{N}^{m}$.

In what follows, for any set $A\subseteq \mathbb{Z}^{m}$, $W_{A}$
will denote the set of all elements of $\mathbb{Z}^{m}$ that do not
exceed any element of $A$ with respect to the order $\unlhd$. (Thus,
$w\in W_{A}$ if and only if there is no $a\in A$ such that $a\unlhd
w$.) Furthermore, for any $r\in \mathbb{N}$, $A(r)$ will denote the
set of all elements $x = (x_{1},\dots, x_{m})\in A$ such that $\ord
x\leq r$.

\smallskip

The above notation can be naturally applied to subsets of
$\mathbb{N}^{m}$ (treated as subsets of $\mathbb{Z}^{m}$). If
$E\subseteq \mathbb{N}^{m}$ and $s\in \mathbb{N}$, then $E(s)$ will
denote the set of all $m$-tuples $e = (e_{1},\dots, e_{m})\in E$
such that $\ord e\leq s$. Furthermore, we shall associate with a set
$E\subseteq \mathbb{N}^{m}$ the set $V_{E}\subseteq \mathbb{N}^{m}$
that consists of all $m$-tuples $v = (v_{1},\dots , v_{m})\in
\mathbb{N}^{m}$ that are not greater than or equal to any $m$-tuple
from $E$ with respect to the product order on $\mathbb{N}^{m}$.
(Clearly, an element $v=(v_{1}, \dots , v_{m})\in \mathbb{N}^{m}$
belongs to $V_{E}$ if and only if for any element  $(e_{1},\dots ,
e_{m})\in E$, there exists $i\in \mathbb{N}, 1\leq i\leq m$, such
that $e_{i} > v_{i}$.)

The following two theorems proved, respectively, in \cite[Chapter 0,
section 17]{Kolchin2} and \cite[Chapter 2]{KLMP} introduce certain
numerical polynomials associated with subsets of $\mathbb{N}^{m}$
and give explicit formulas for the computation of these polynomials.

\begin{theorem}
Let $E$ be a subset of $\mathbb{N}^{m}$. Then there exists a
numerical polynomial $\omega_{E}(t)$ with the following properties:

\smallskip

{\em (i)} \,  $\omega_{E}(r) = \Card V_{E}(r))$ for all sufficiently
large $r\in \mathbb{N}$.

\smallskip

{\em (ii)} \, $\deg\omega_{E}$ does not exceed $m$ and
$\deg\omega_{E} = m$ if and only if $E=\emptyset$. In the last case,
$\omega_{E}(t) = {\D{t+m}\choose m}$.
\end{theorem}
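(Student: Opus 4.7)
The plan is to reduce to the finitely-generated case via Dickson's lemma and then apply an inclusion-exclusion argument on the complement of $V_{E}$ in $\mathbb{N}^{m}$.

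First I would observe that $V_{E}$ depends only on the set $\min(E)$ of minimal elements of $E$ with respect to the product order $\leq_{P}$: for any $v \in \mathbb{N}^{m}$, we have $v \notin V_{E}$ iff there exists $e \in E$ with $e \leq_{P} v$, which happens iff there exists $e \in \min(E)$ with $e \leq_{P} v$. By Dickson's lemma, $\min(E)$ is finite, say $\min(E) = \{e^{(1)}, \dots, e^{(s)}\}$, and we may replace $E$ by this finite set without changing $V_{E}$.

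Next, the case $E = \emptyset$ (equivalently $s = 0$) is direct: here $V_{E} = \mathbb{N}^{m}$ and $\Card V_{E}(r) = \binom{r+m}{m}$ for every $r \in \mathbb{N}$, which is a numerical polynomial of degree $m$ (in fact one of the basic binomial coefficients from Theorem 2.6). For $s \geq 1$, denote by $C_{i} = e^{(i)} + \mathbb{N}^{m}$ the ``cone'' of $m$-tuples dominating $e^{(i)}$. Then $\mathbb{N}^{m} \setminus V_{E} = C_{1} \cup \cdots \cup C_{s}$, and for any nonempty $J \subseteq \{1,\dots,s\}$ we have $\bigcap_{i \in J} C_{i} = e^{(J)} + \mathbb{N}^{m}$, where $e^{(J)}$ is the componentwise maximum of the $e^{(i)}$ with $i \in J$. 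For any $r \geq \ord e^{(J)}$, the translation $x \mapsto x - e^{(J)}$ gives
\[
\Card\bigl((e^{(J)} + \mathbb{N}^{m})(r)\bigr) \;=\; \binom{r - \ord e^{(J)} + m}{m}.
\]
Applying the inclusion-exclusion principle to $(\mathbb{N}^{m} \setminus V_{E})(r)$, I obtain for all $r$ at least as large as $\max_{J} \ord e^{(J)}$
\[
\Card V_{E}(r) \;=\; \binom{r+m}{m} \;-\; \sum_{\emptyset \neq J \subseteq \{1,\dots,s\}} (-1)^{|J|+1} \binom{r - \ord e^{(J)} + m}{m}.
\]
Therefore the candidate polynomial is
\[
\omega_{E}(t) \;=\; \binom{t+m}{m} + \sum_{\emptyset \neq J \subseteq \{1,\dots,s\}} (-1)^{|J|} \binom{t - \ord e^{(J)} + m}{m},
\]
which is a $\mathbb{Q}$-linear combination of numerical polynomials and hence numerical, of degree at most $m$.

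Finally, for (ii), the bound $\deg \omega_{E} \leq m$ is immediate from the formula. The equivalence $\deg \omega_{E} = m \iff E = \emptyset$ is the delicate point and is what I would call the main obstacle. The ``if'' direction is handled by the empty case above. For the converse, when $s \geq 1$ the coefficient of $t^{m}$ in $\omega_{E}(t)$ equals
\[
\frac{1}{m!}\Bigl(1 + \sum_{\emptyset \neq J \subseteq \{1,\dots,s\}} (-1)^{|J|}\Bigr) \;=\; \frac{1}{m!}\sum_{J \subseteq \{1,\dots,s\}} (-1)^{|J|} \;=\; 0,
\]
so the leading terms cancel and $\deg \omega_{E} < m$. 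This establishes both properties and completes the proof.
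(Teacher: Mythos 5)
Your proof is correct and follows essentially the same route the paper relies on: the reduction to the finite set of minimal elements via Dickson's lemma is exactly the paper's Remark following Theorem 2.8, and your inclusion--exclusion identity is precisely formula (2.3) of Theorem 2.8 with $b_{\theta}=\ord e^{(J)}$, the leading-coefficient cancellation giving part (ii). One cosmetic quibble: a $\mathbb{Q}$-linear combination of numerical polynomials need not be numerical --- your $\omega_{E}(t)$ is numerical because the coefficients in your combination are integers ($\pm 1$).
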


The polynomial $\omega_{E}(t)$ is called the {\em dimension
polynomial} of the set $E\subseteq \mathbb{N}^{m}$ .

\begin{theorem}
Let $E = \{e_{1}, \dots, e_{q}\}$ ($q\geq 1$) be a finite subset of
$\mathbb{N}^{m}$.  Let $e_{i} = (e_{i1}, \dots, e_{im})$ \, ($1\leq
i\leq q$) and for any $l\in \mathbb{N}$, $0\leq l\leq q$, let
$\Theta(l,q)$ denote the set of all $l$-element subsets of the set
$\mathbb{N}_{q} = \{1,\dots, q\}$. Furthermore, let
$\bar{e}_{\emptyset j} = 0$ and for any $\theta\in\Theta(l,q)$,
$\theta\neq \emptyset$, let $\bar{e}_{\theta j} = \max
\{e_{ij}\,|\,i\in \theta\}$, $1\leq j\leq m$. (In other words, if
$\theta = \{i_{1},\dots, i_{l}\}$, then $\bar{e}_{\theta j}$ denotes
the greatest $j$th coordinate of the elements $e_{i_{1}},\dots,
e_{i_{l}}$.) Furthermore, let  $b_{\theta}
=\D\sum_{j=1}^{m}\bar{e}_{\theta j}$. Then
\begin{equation}\label{eq:6} \omega_{E}(t) =
\D\sum_{l=0}^{q}(-1)^{l}\D\sum_{\theta\in\Theta(l,\, q)}{t+ m -
b_{\theta}\choose m}
\end{equation}
\end{theorem}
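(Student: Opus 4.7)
The plan is to compute $\omega_E(r)$ for large $r$ by counting the complement of $V_E$ inside $\mathbb{N}^m(r) = \{v \in \mathbb{N}^m : \ord v \leq r\}$ via inclusion-exclusion over the generators $e_1,\dots,e_q$.

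First I would unpack the definition of $V_E$. By construction, $v = (v_1,\dots,v_m) \in \mathbb{N}^m$ fails to lie in $V_E$ precisely when there exists some $i \in \{1,\dots,q\}$ with $e_i \leq_P v$, i.e.\ $v_j \geq e_{ij}$ for all $j = 1,\dots,m$. Writing $U_i = \{v \in \mathbb{N}^m : v \geq_P e_i\}$, this gives the set-theoretic identity
\begin{equation*}
\mathbb{N}^m \setminus V_E \;=\; \bigcup_{i=1}^{q} U_i.
\end{equation*}

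Next I would invoke inclusion-exclusion. For any nonempty $\theta \in \Theta(l,q)$, the intersection $\bigcap_{i\in\theta} U_i$ consists of all $v \in \mathbb{N}^m$ with $v_j \geq \bar{e}_{\theta j}$ for every $j$, because the simultaneous componentwise inequalities collapse to the coordinatewise maximum. Translating by the fixed vector $\bar{e}_\theta = (\bar{e}_{\theta 1},\dots,\bar{e}_{\theta m})$ gives a bijection between $\bigcap_{i\in\theta} U_i$ and $\mathbb{N}^m$ that shifts the order by exactly $b_\theta = \sum_j \bar{e}_{\theta j}$. Therefore, for every $r \geq b_\theta$,
\begin{equation*}
\Card\Bigl(\bigcap_{i\in\theta} U_i\Bigr)(r) \;=\; \Card\, \mathbb{N}^m(r - b_\theta) \;=\; \binom{r - b_\theta + m}{m},
\end{equation*}
using the standard count $\Card \mathbb{N}^m(s) = \binom{s+m}{m}$ for $s \in \mathbb{N}$.

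Combining these, for all $r \geq \max_\theta b_\theta$ (which holds for all sufficiently large $r$),
\begin{equation*}
\omega_E(r) \;=\; \binom{r+m}{m} - \Card\Bigl(\bigcup_{i=1}^{q} U_i\Bigr)(r) \;=\; \binom{r+m}{m} + \sum_{l=1}^{q}(-1)^{l}\sum_{\theta \in \Theta(l,q)} \binom{r + m - b_\theta}{m}.
\end{equation*}
Absorbing the leading term into the sum via the convention $\Theta(0,q) = \{\emptyset\}$, $b_\emptyset = 0$ yields precisely \eqref{eq:6}. The right-hand side is a numerical polynomial in $r$ (each $\binom{r + m - b_\theta}{m}$ is), and it agrees with $\Card V_E(r)$ for all sufficiently large $r$, so by the uniqueness part of Theorem 2.7 it coincides with $\omega_E(t)$.

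There is no real obstacle beyond the mild bookkeeping of the inclusion-exclusion: the only point to be careful about is that the equality $\Card \bigl(\bigcap_{i\in\theta} U_i\bigr)(r) = \binom{r-b_\theta+m}{m}$ holds only for $r \geq b_\theta$, but since Theorem 2.7 only asserts agreement for sufficiently large $r$, this causes no issue.
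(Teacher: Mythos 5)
Your argument is correct: the identity $\mathbb{N}^m\setminus V_E=\bigcup_i U_i$, the observation that $\bigcap_{i\in\theta}U_i$ is a translate of $\mathbb{N}^m$ by the coordinatewise maximum $\bar e_\theta$, and the inclusion-exclusion count all check out, and the caveat about $r\geq b_\theta$ is handled properly. The paper does not prove this theorem itself but cites \cite[Chapter 2]{KLMP}, where the proof is essentially this same inclusion-exclusion computation, so your approach matches the standard one.
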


{\bf Remark.} \, It is clear that if $E\subseteq \mathbb{N}^{n}$ and
$E^{\ast}$ is the set of all minimal elements of the set $E$ with
respect to the product order on $\mathbb{N}^{m}$, then the set
$E^{\ast}$ is finite and $\omega_{E}(t) = \omega_{E^{\ast}}(t)$.
Thus, the last theorem gives an algorithm that allows one to find a
numerical polynomial associated with any subset of $\mathbb{N}^{m}$:
one should first find the set of all minimal points of the subset
and then apply Theorem 2.8.

\medskip

The following two results proved in \cite[Section 2.5]{KLMP}
describe dimension polynomials associated with subsets of
$\mathbb{Z}^{m}$.

\begin{theorem}
Let $A$ be a subset of $\mathbb{Z}^{m}$. Then there exists a
numerical polynomial $\phi_{A}(t)$ such that

\smallskip

{\em (i)}\, $\phi_{A}(r) = \Card W_{A}(r)$ for all sufficiently
large $r\in\mathbb{N}$.

\smallskip

{\em (ii)}\, $\deg\phi_{A}\leq m$ and the polynomial $\phi_{A}(t)$
can be written in the form $\phi_{A}(t) =
\D\sum_{i=0}^{m}a_{i}{{t+i}\choose i}$ where $a_{i}\in\mathbb{Z}$
and $2^{m}|a_{m}$.

\smallskip

{\em (iii)}\, $\phi_{A}(t)=0$ if and only if $(0,\dots,0)\in A$.

\smallskip

{\em (iv)}\, If $A = \emptyset$, then $\phi_{A}(t) =
\D\sum_{i=0}^{m}(-1)^{m-i}2^{i}{m\choose i}{{t+i}\choose i}$.
\end{theorem}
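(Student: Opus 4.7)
The approach is to reduce Theorem 2.9 to Theorem 2.7 via the orthant decomposition $\mathbb{Z}^{m}=\bigcup_{j=1}^{2^{m}}\mathbb{Z}_{j}^{(m)}$, combined with inclusion--exclusion to handle the overlaps on coordinate hyperplanes. For each $j$, the absolute-value map $\rho_{j}\colon\mathbb{Z}_{j}^{(m)}\to\mathbb{N}^{m}$, $(a_{1},\ldots,a_{m})\mapsto(|a_{1}|,\ldots,|a_{m}|)$, is a bijection that, by the definition of $\unlhd$, carries the restriction of $\unlhd$ onto the product order on $\mathbb{N}^{m}$ and preserves the order function $\sum|a_{i}|$. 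Set $E_{j}:=\rho_{j}(A\cap\mathbb{Z}_{j}^{(m)})$. Observe that if $a\in A$ satisfies $a\unlhd w$ with $w\in\mathbb{Z}_{j}^{(m)}$, then $|a_{i}|\leq|w_{i}|$ forces $a_{i}=0$ wherever $w_{i}=0$, so $a$ itself lies in $\mathbb{Z}_{j}^{(m)}$; hence $w\in W_{A}$ if and only if $\rho_{j}(w)\in V_{E_{j}}$, and Theorem 2.7 gives $\Card(W_{A}\cap\mathbb{Z}_{j}^{(m)})(r)=\omega_{E_{j}}(r)$ for all $r\gg 0$.

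Two distinct orthants meet exactly on the coordinate subspace where the coordinates on which they disagree must vanish. By inclusion--exclusion,
$$\Card W_{A}(r)=\sum_{\emptyset\neq\mathcal{J}\subseteq\{1,\ldots,2^{m}\}}(-1)^{|\mathcal{J}|+1}\Card\bigl(W_{A}\cap\mathbb{Z}_{\mathcal{J}}\bigr)(r),\qquad \mathbb{Z}_{\mathcal{J}}:=\bigcap_{j\in\mathcal{J}}\mathbb{Z}_{j}^{(m)}.$$
Each $\mathbb{Z}_{\mathcal{J}}$ carries an orthant structure on a coordinate subspace of dimension $m'\leq m$, so by induction on $m$ (trivial base $m=0$) every summand is a numerical polynomial for large $r$. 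Their alternating sum defines $\phi_{A}(t)$; the bound $\deg\phi_{A}\leq m$ follows from $\deg\omega_{E_{j}}\leq m$ together with the strictly lower dimensionality of the correction terms. This settles (i) and the degree bound in (ii).

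For (iii): if $(0,\ldots,0)\in A$ then $(0,\ldots,0)\unlhd w$ for every $w$, so $W_{A}=\emptyset$ and $\phi_{A}=0$; conversely, if $(0,\ldots,0)\notin A$ then $(0,\ldots,0)\in W_{A}$, forcing $\phi_{A}(r)\geq 1$ eventually. For (iv), specialize the inclusion--exclusion to $A=\emptyset$: every $\omega_{E_{j}}$ equals $\binom{t+m}{m}$, and regrouping the alternating sum by the codimension of the surviving coordinate subspace reproduces the explicit formula (the intersections corresponding to fixing $m-i$ coordinates to $0$ contribute the term with $\binom{t+i}{i}$). The divisibility $2^{m}\mid a_{m}$ in (ii) is the main obstacle: the degree-$m$ coefficient of $\phi_{A}$ comes only from singleton terms $\{j\}$ with $E_{j}=\emptyset$ (since $\deg\omega_{E_{j}}<m$ when $E_{j}\neq\emptyset$, while every $|\mathcal{J}|\geq 2$ term lives on a strictly lower-dimensional subspace), and the factor of $2^{m}$ has to be extracted by a careful sign-symmetry/pairing argument on the collection of such empty orthants, in the spirit of the $A=\emptyset$ computation in (iv).
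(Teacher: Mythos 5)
Your orthant decomposition is a sound strategy and it is genuinely different from the route the paper relies on: the paper (deferring to \cite{KLMP}, packaged here as Theorem 2.10) makes a \emph{single} reduction to Theorem 2.7 by embedding $\mathbb{Z}^{m}$ into $\mathbb{N}^{2m}$ via $e\mapsto(\max\{e_{i},0\},\max\{-e_{i},0\})$ and adjoining the points $\bar{e}_{1},\dots,\bar{e}_{m}$ to cut away the part of $\mathbb{N}^{2m}$ lying outside the image of that embedding, so that $\phi_{A}=\omega_{B}$ with no inclusion--exclusion at all. Your treatment of (i), of the bound $\deg\phi_{A}\leq m$, and of (iii) and (iv) is correct; the key observation that $a\unlhd w$ forces $a$ into every orthant containing $w$ is exactly what legitimizes the orthant-by-orthant identification of $W_{A}\cap\mathbb{Z}_{j}^{(m)}$ with $V_{E_{j}}$. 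The paper's embedding buys a one-line formula ready for Theorem 2.8; your decomposition buys a more transparent picture of where each coefficient comes from.

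The genuine gap is the divisibility $2^{m}\mid a_{m}$, and it is not one you can close by the ``sign-symmetry/pairing argument'' you defer to. Your own reduction already computes $a_{m}$: since every term with $\Card\mathcal{J}\geq 2$ and every $\omega_{E_{j}}$ with $E_{j}\neq\emptyset$ has degree $<m$, you get $a_{m}=\Card\{j\,:\,A\cap\mathbb{Z}_{j}^{(m)}=\emptyset\}$, an integer between $0$ and $2^{m}$. Divisibility by $2^{m}$ would therefore force $a_{m}\in\{0,2^{m}\}$, i.e.\ either $A=\emptyset$ or $A$ meets every orthant, which fails for arbitrary $A$: for $m=1$ and $A=\{1\}$ one has $W_{A}=\{w\in\mathbb{Z}\,:\,w\leq 0\}$, hence $\phi_{A}(t)=t+1={t+1\choose 1}$ and $a_{1}=1$ (the recipe of Theorem 2.10 gives the same answer). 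So under the definitions as stated, part (ii) cannot be established for an arbitrary subset of $\mathbb{Z}^{m}$; the divisibility is a feature of the sets that actually arise in Theorem 3.16 and Section 4 (leader-exponent sets of characteristic sets of reflexive ideals, which meet every orthant as soon as they are nonempty, as in all the worked examples). You should either prove (ii) under that additional hypothesis on $A$, or record the counterexample; as written, the one nontrivial claim of (ii) is left unproved, and the proposed pairing argument would have to prove something false.
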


\begin{theorem}
With the notation of Theorem 2.9, let us consider a mapping $\rho:
\mathbb{Z}^{m}\longrightarrow\mathbb{N}^{2m}$ such that
$$\rho((e_{1},\dots, e_{m})) =(\max \{e_{1}, 0\}, \dots, \max \{e_{m}, 0\}, \max \{-e_{1}, 0\}, \dots, \max \{-e_{m}, 0\}).$$
Let $B = \rho(A)\bigcup \{\bar{e}_{1},\dots, \bar{e}_{m}\}$ where
$\bar{e}_{i}$ ($1\leq i\leq m$) is a $2n$-tuple in $\mathbb{N}^{2m}$
whose $i$th and $(m+i)$th coordinates are equal to 1 and all other
coordinates are equal to 0. Then $\phi_{A}(t)= \omega_{B}(t)$ where
$\omega_{B}(t)$ is the dimension polynomial of the set $B\subseteq
\mathbb{N}^{2m}$ (i. e., the dimension polynomial introduced in
Theorem 2.7).
\end{theorem}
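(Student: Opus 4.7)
The plan is to show that the map $\rho$ restricts to a bijection $W_{A}\to V_{B}$ that preserves order, so that $\phi_{A}(r)=|W_{A}(r)|=|V_{B}(r)|=\omega_{B}(r)$ for all sufficiently large $r$, whence the two numerical polynomials coincide. Everything thus reduces to analyzing how $\rho$ translates the partial order $\unlhd$ on $\mathbb{Z}^{m}$ into the product order on $\mathbb{N}^{2m}$.

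First I would verify that $\rho$ is injective and compute its image. For any $v=(v_{1},\dots,v_{m},v_{m+1},\dots,v_{2m})\in\mathbb{N}^{2m}$, we have $v\in\rho(\mathbb{Z}^{m})$ if and only if $v_{i}v_{m+i}=0$ for every $i=1,\dots,m$ (that is, at most one of the two coordinates is nonzero), in which case the preimage is $w_{i}=v_{i}-v_{m+i}$. The condition $v_{i}v_{m+i}=0$ is exactly the condition that $v$ is not $\geq \bar{e}_{i}$ in the product order for any $i$; consequently, membership in the image of $\rho$ is the same as the restriction imposed on $V_{B}$ by the auxiliary points $\bar{e}_{1},\dots,\bar{e}_{m}$.

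The core step is the equivalence
\[
a\unlhd w\quad\Longleftrightarrow\quad \rho(a)\leq \rho(w)\ \text{in}\ \mathbb{N}^{2m},
\]
valid for all $a,w\in\mathbb{Z}^{m}$. For the forward direction, $a$ and $w$ lie in a common orthant and satisfy $|a_{i}|\leq|w_{i}|$ coordinatewise; a case split according to whether coordinate $i$ belongs to the $\mathbb{N}$-factor or the $\mathbf{Z}_{-}$-factor of that orthant yields the two inequalities $\max\{a_{i},0\}\leq\max\{w_{i},0\}$ and $\max\{-a_{i},0\}\leq\max\{-w_{i},0\}$. For the converse, one reads off from $\rho(a)\leq\rho(w)$ that in each coordinate $a_{i}$ and $w_{i}$ have the same strict sign (when nonzero), and the corresponding absolute values satisfy $|a_{i}|\leq|w_{i}|$; the potentially delicate case $a_{i}=0$ is handled by placing $a$ in whichever orthant contains $w$. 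This case analysis with coordinates equal to zero is the one place that needs care, and I expect it to be the main (minor) obstacle.

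Combining the previous two paragraphs: since $\rho(w)$ automatically lies in the image of $\rho$, the condition $\rho(w)\in V_{B}$ reduces to requiring that no $\rho(a)$ with $a\in A$ be $\leq\rho(w)$, which by the equivalence above is exactly $w\in W_{A}$. Thus $\rho$ is a bijection $W_{A}\to V_{B}$. Finally, since
\[
\ord \rho(w)=\sum_{i=1}^{m}\bigl(\max\{w_{i},0\}+\max\{-w_{i},0\}\bigr)=\sum_{i=1}^{m}|w_{i}|=\ord w,
\]
the restriction of $\rho$ yields a bijection $W_{A}(r)\to V_{B}(r)$ for every $r\in\mathbb{N}$. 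Applying Theorem 2.9(i) on the left and Theorem 2.7(i) on the right gives $\phi_{A}(r)=\omega_{B}(r)$ for all sufficiently large $r$, and because two numerical polynomials that agree on a cofinite subset of $\mathbb{N}$ are identical, we conclude $\phi_{A}(t)=\omega_{B}(t)$, as required.
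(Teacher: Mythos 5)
Your proof is correct, and it follows the same route as the argument in the cited source (the paper itself only quotes this result from \cite[Section 2.5]{KLMP}): one checks that $\rho$ is an order- and $\ord$-preserving bijection of $W_{A}$ onto $V_{B}$, with the auxiliary points $\bar{e}_{1},\dots,\bar{e}_{m}$ cutting $V_{B}$ down to the image of $\rho$, and then compares $\Card W_{A}(r)$ with $\Card V_{B}(r)$. The only point deserving the care you already flag is the converse implication $\rho(a)\leq\rho(w)\Rightarrow a\unlhd w$ when some $a_{i}=0$, which you resolve correctly by choosing the orthant determined by $w$.
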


The polynomial $\phi_{A}(t)$ is called the {\em dimension
polynomial} of the set $A\subseteq \mathbb{Z}^{m}$. It is easy to
see that Theorems 10 and 8 provide an algorithm for computing such a
polynomial.

\section{Difference Dimension Polynomials}

Let $K$ be a difference field with a basic set $\sigma =
\{\alpha_{1},\dots, \alpha_{m}\}$. As before, $T$ will denote the
free commutative semigroup of all elements of the form $\tau =
\alpha_{1}^{k_{1}}\dots \alpha_{m}^{k_{m}}$ ($k_{i}\in\mathbb{N}$
for $i=1,\dots, m$). We define the {\em order} of such an element as
$\ord\tau = \D\sum_{i=1}^{m}k_{i}$, and set $T(r) = \{\tau\in T\,|\,
\ord\tau\leq r\}$ for every $r\in\mathbb{N}$. If the difference
($\sigma$-) field is inversive, that is, all mappings $\alpha_{i}$
are automorphisms of $K$, then $\Gamma$ still denotes the free
commutative group generated by the set $\sigma$. If $\gamma =
\alpha_{1}^{k_{1}}\dots \alpha_{m}^{k_{m}}\in \Gamma$ ($k_{1},\dots,
k_{m}\in \mathbb{N}$), the {\em order} of $\gamma$ is defined as
$\ord\gamma = \D\sum_{i=1}^{m}|k_{i}|$. Furthermore, for any
$r\in\mathbb{N}$, we set $\Gamma(r) = \{\gamma\in\Gamma\,|\,
\ord\gamma\leq r\}$.

\medskip

The following two theorems proved, respectively, in \cite{Levin0}
(see also \cite[Chapter 4]{Levin1} and
 \cite[Theorem 6.4.8]{KLMP} introduce the concepts and provide some
description of dimension polynomials associated with finitely
generated difference and inversive difference field extensions.

\begin{theorem}
Let $K$ be a difference field with a basic set $\sigma =
\{\alpha_{1},\dots , \alpha_{m}\}$ and let $L = K\langle
\eta_{1},\dots,\eta_{n}\rangle$ be a difference field extension of
$K$ generated by a finite set $\eta = \{\eta_{1}, \dots ,
\eta_{n}\}$. Then there exists a numerical polynomial
$\phi_{\eta}(t)$ such that

\smallskip

\noindent {\em (i)} \, $\phi_{\eta}(r) = \trdeg_{K}K(\{\tau\eta_{j}
| \tau \in T(r), 1\leq j\leq n\})$ for all sufficiently large $r\in
\mathbb{Z}$.

\smallskip

\noindent {\em (ii)}\, $\deg \phi_{\eta}(t) \leq m$ and
$\phi_{\eta}(t)$ can be written as $$\phi_{\eta}(t) =
\sum_{i=0}^{m}a_{i}{t+i\choose i}$$ where $a_{0},\dots, a_{m}\in
\mathbb{Z}$;

\smallskip

\noindent {\em (iii)}\, The degree $d$ of the polynomial
$\phi_{\eta}(t)$ and the coefficients $a_{m}$ and $a_{d}$ do not
depend on the choice of the system of generators $\eta$ (clearly,
$a_{d}\neq a_{m}$ if and only if \,$d < m$, that is,\, $a_{m}=0$).
Moreover, $a_{m}$ is equal to the difference transcendence degree of
$L$ over $K$, i. e., to the maximal number of elements
$\xi_{1},\dots,\xi_{k}\in L$ such that the set $\{\tau(\xi_{i}) |
\tau\in T,\, 1\leq i\leq k\}$ is algebraically independent over $K$
(this characteristic of the $\sigma$-field extension is denoted by
$\sigma$-$\trdeg_{K}L$).
\end{theorem}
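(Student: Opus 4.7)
The plan is to reduce the computation of $\phi_{\eta}(r) = \trdeg_{K}K(\{\tau\eta_{j} : \tau \in T(r),\, 1\leq j\leq n\})$ to a combinatorial count in $\mathbb{N}^{m}$ to which Theorem 2.7 applies. First, I would invoke Proposition 2.4 to identify $L$ with the quotient $\sigma$-field of $R/P$, where $R = K\{y_{1},\dots,y_{n}\}$ is the algebra of $\sigma$-polynomials over $K$ and $P\subset R$ is the defining reflexive prime $\sigma$-ideal of $(\eta_{1},\dots,\eta_{n})$. Then $\phi_{\eta}(r)$ equals the transcendence degree over $K$ of the fraction field of the $K$-subalgebra of $R/P$ generated by $\{\tau y_{j} + P : \tau\in T(r),\, 1\leq j\leq n\}$.

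Next, I would fix an orderly ranking on the set $T\times\{1,\dots,n\}$ refining the partial order given by $\ord\tau$, and construct a characteristic set $\mathcal{A}$ of the prime $\sigma$-ideal $P$ with leaders $u_{A} = \tau_{A} y_{j_{A}}$, $A\in\mathcal{A}$. The standard difference-algebraic reduction developed in \cite{Levin1} and \cite{KLMP} shows that the family of \emph{non-leader transforms} --- those cosets $\tau y_{j} + P$ with $\tau y_{j}$ not a proper multiple (in the semigroup sense) of any leader $u_{A}$ having $j_{A}=j$ --- maps onto an algebraic transcendence basis of the fraction field of $R/P$ over $K$. Setting $E_{j}\subseteq\mathbb{N}^{m}$ to be, for each $j\in\{1,\dots,n\}$, the set of exponent vectors of those $\tau_{A}$ with $j_{A}=j$, one obtains
\begin{equation*}
\phi_{\eta}(r) = \D\sum_{j=1}^{n}\Card V_{E_{j}}(r)\qquad\text{for}\quad r\gg 0.
\end{equation*}
By Theorem 2.7 each $\Card V_{E_{j}}(r)$ is given, for large $r$, by the numerical polynomial $\omega_{E_{j}}(t)$ of degree at most $m$, so $\phi_{\eta}(t) := \D\sum_{j=1}^{n}\omega_{E_{j}}(t)$ is a numerical polynomial of degree at most $m$, proving (i) and the degree bound in (ii). The canonical binomial expansion in (ii) then follows from Theorem 2.6.

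For (iii), let $\eta$ and $\eta'$ be two finite systems of $\sigma$-generators of $L$ over $K$. Since each coordinate of $\eta'$ lies in $K\langle\eta\rangle$, there is $N\in\mathbb{N}$ with $\eta'_{k}\in K(\{\tau\eta_{j} : \ord\tau\leq N\})$ for every $k$, whence $\phi_{\eta'}(r)\leq\phi_{\eta}(r+N)$ for $r\gg 0$; symmetrically $\phi_{\eta}(r)\leq\phi_{\eta'}(r+N')$ for some $N'$. Because the substitution $t\mapsto t+N$ preserves the degree and the leading coefficient of a polynomial, $\phi_{\eta}$ and $\phi_{\eta'}$ must share the same degree $d$ and the same coefficient $a_{m}$ of ${t+m\choose m}$ in the basis of Theorem 2.6 (which dominates asymptotically). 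A more careful comparison of the highest surviving binomial shows that $a_{d}$ is also invariant, since each ${t+i\choose i}$ with $i<m$ is of strictly smaller asymptotic order than ${t+m\choose m}$. Finally, choosing $\eta$ so that its first $k = \sigma\text{-}\trdeg_{K}L$ coordinates form a $\sigma$-transcendence basis of $L/K$, the contribution of these independent coordinates to $\phi_{\eta}(r)$ is exactly $k{r+m\choose m}$, while the remaining $\sigma$-algebraic coordinates contribute a polynomial of degree strictly less than $m$; comparing the ${t+m\choose m}$-coefficient identifies $a_{m}$ with $\sigma\text{-}\trdeg_{K}L$.

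The main obstacle I anticipate is establishing the characteristic-set lemma in the non-inversive difference setting: one must show that for a prime reflexive $\sigma$-ideal $P$ there is a characteristic set whose non-leader transforms map modulo $P$ onto a transcendence basis of the quotient field. Because the endomorphisms $\alpha_{i}$ need not be invertible on $R$, the underlying reduction procedure is more delicate than in the classical differential case of Kolchin; however, this machinery is precisely what is developed in \cite{Levin1} and \cite{KLMP} and can be invoked directly.
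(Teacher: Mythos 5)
Your outline is correct and follows essentially the same route as the proof the paper relies on (the paper itself only cites \cite{Levin0}, \cite{Levin1} and \cite{KLMP} for this theorem): pass to the defining prime reflexive $\sigma$-ideal $P$, take a characteristic set with respect to an orderly ranking, observe that the terms $\tau y_{j}$ that are not transforms of leaders give a transcendence basis in each order $\leq r$, and reduce to the Kolchin-type dimension polynomials $\omega_{E_{j}}(t)$ of Theorem 2.7, with part (iii) obtained by the standard two-sided shift comparison $\phi_{\eta'}(r)\leq\phi_{\eta}(r+N)$. This is exactly the mechanism the paper later codifies in Theorem 3.16 for the inversive case, so no further comparison is needed.
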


The numerical polynomial $\phi_{\eta}(t)$ is called a {\bf
difference (or $\sigma$-) dimension polynomial} of the difference
($\sigma$-) field extension $L/K$.

\begin{theorem}
Let $K$ be an inversive  difference field with a basic set $\sigma =
\{\alpha_{1}, \dots, \alpha_{m}\}$ and $L$ an inversive difference
field extension of $K$ generated by a finite set $\eta = \{\eta_{1},
\dots , \eta_{n}\}$. Then there exists a polynomial
$\psi_{\eta|K}(t)$ in one variable $t$ with rational coefficients
such that

\smallskip

{\em (i)}\, $\psi_{\eta|K}(r) = \trdeg_{K}K(\{\gamma(\eta_{j}) |
\gamma \in \Gamma(r), 1\leq j\leq p\})$ for all sufficiently large
integers $r$.

\smallskip

{\em (ii)}\, $\deg \psi_{\eta|K} \leq n$ and the polynomial
$\psi_{\eta|K}(t)$ can be written as $$\psi_{\eta|K}(t) =
\sum_{i=0}^{m}a_{i}2^{i}{t+i\choose i}$$ where $a_{0},\dots,
a_{m}\in \mathbb{Z}$.

\smallskip

{\em (iii)}\, The degree $d$ of the polynomial $\psi_{\eta|K}$ and
the coefficients $a_{m}$ and $a_{d}$ do not depend on the choice of
the system of $\sigma^{\ast}$-generators $\eta$ of the extension
$L/K$.  Furthermore, $a_{m}$ is equal to the difference
transcendence degree of $L$ over $K$.
\end{theorem}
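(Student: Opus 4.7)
My plan is to adapt the argument for Theorem 3.1 (the non-inversive case), replacing the combinatorics of $\mathbb{N}^{m}$ (Theorem 2.7) by that of $\mathbb{Z}^{m}$ developed in Theorems 2.9 and 2.10; the factors $2^{i}$ in the claimed form of $\psi_{\eta|K}(t)$ arise from summing contributions across the $2^{m}$ orthants $\mathbb{Z}_{k}^{(m)}$ of $\mathbb{Z}^{m}$.

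\textbf{Setup and characteristic set.} Realise $L$ as the quotient $\sigma^{\ast}$-field of $R/P$, where $R=K\{y_{1},\dots,y_{n}\}^{\ast}$ is the algebra of $\sigma^{\ast}$-polynomials and $P$ is the (reflexive, prime) defining $\sigma^{\ast}$-ideal of $\eta$ (see the remarks preceding Proposition 2.4). On the set of indeterminates $\{\gamma y_{j} : \gamma\in\Gamma,\,1\le j\le n\}$ introduce a ranking that refines the order $\unlhd$ on $\Gamma\cong\mathbb{Z}^{m}$: order first by $\ord\gamma$, then orthant-by-orthant by the product order, breaking ties by $j$. Restricted to any single orthant $\Gamma_{k}$ this is a well-ordering, which suffices to run a reduction procedure and construct a characteristic set $\mathcal{A}$ of $P$. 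For each $j$ let $A_{j}\subseteq\Gamma\cong\mathbb{Z}^{m}$ denote the (finite) set of those $\gamma$ such that $\gamma y_{j}$ is a leader of some element of $\mathcal{A}$.

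\textbf{Counting.} The key combinatorial identity, proved by reducing modulo $\mathcal{A}$ orthant by orthant and observing that the ``non-leader'' transforms $\gamma(\eta_{j})$ with $\gamma\in W_{A_{j}}$ form an algebraically independent family modulo $P$, is
\[
\trdeg_{K} K\bigl(\{\gamma(\eta_{j}) : \gamma\in\Gamma(r),\,1\le j\le n\}\bigr) \;=\; \sum_{j=1}^{n}\Card W_{A_{j}}(r)
\]
for all sufficiently large $r$. Applying Theorem 2.9 to each $A_{j}$ yields numerical polynomials $\phi_{A_{j}}(t)$, and setting $\psi_{\eta|K}(t):=\sum_{j=1}^{n}\phi_{A_{j}}(t)$ establishes (i) and the degree bound in (ii). The more refined representation $\psi_{\eta|K}(t)=\sum_{i=0}^{m}a_{i}2^{i}\binom{t+i}{i}$ is then extracted from Theorem 2.10, which expresses each $\phi_{A_{j}}$ via the dimension polynomial $\omega_{B_{j}}(t)$ of an auxiliary set $B_{j}\subseteq\mathbb{N}^{2m}$ built from the orthant decomposition; summing these contributions produces exactly one factor of $2$ per negative-coordinate choice, accounting for the $2^{i}$ in the coefficient of $\binom{t+i}{i}$.

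\textbf{Invariance and main obstacle.} For (iii), a second system $\eta'=(\eta'_{1},\dots,\eta'_{n'})$ of $\sigma^{\ast}$-generators of $L/K$ expresses each $\eta_{j}$ as a rational function in finitely many translates $\gamma(\eta'_{k})$ of bounded order, and symmetrically, so there exists $s\in\mathbb{N}$ with
\[
\psi_{\eta|K}(r-s)\;\le\;\psi_{\eta'|K}(r)\;\le\;\psi_{\eta|K}(r+s)
\]
for all sufficiently large $r$; this forces the degree $d$ and leading coefficient $a_{d}$ to be the same for $\eta$ and $\eta'$. The invariance of $a_{m}$ and its identification with the difference transcendence degree of $L$ over $K$ follow because $a_{m}$ is the normalised coefficient of the dominant $\mathbb{Z}^{m}$-ball growth term, which equals the maximum number of elements of $L$ whose $\Gamma$-orbits are algebraically independent over $K$, a manifestly intrinsic invariant. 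The genuinely hard step is the characteristic-set reduction in the inversive setting: unlike in Theorem 3.1 where $T\cong\mathbb{N}^{m}$ is well-ordered, $\Gamma\cong\mathbb{Z}^{m}$ is not, and both termination of the reduction and the global algebraic independence of non-leader transforms must be argued orthant by orthant and then patched together.
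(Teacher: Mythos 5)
You should first note that the paper itself does not prove this theorem: it is quoted from Levin's work (\cite{Levin0}, \cite[Chapter 4]{Levin1}, \cite[Theorem 6.4.8]{KLMP}), and the route you take --- a characteristic set of the defining $\sigma^{\ast}$-ideal together with the dimension polynomials of subsets of $\mathbb{Z}^{m}$ from Theorems 2.9 and 2.10 --- is exactly the machinery the paper later packages as Theorem 3.16, so your overall strategy is the right one. Two points, however, are genuine gaps rather than routine omissions. First, the central identity $\trdeg_{K}K(\{\gamma(\eta_{j}):\gamma\in\Gamma(r),\ 1\leq j\leq n\})=\sum_{j}\Card W_{A_{j}}(r)$ is where all the work lies, and your one-line justification hides the real difficulty: $\ord$ is not additive on $\Gamma$ (applying $\alpha_{1}^{-1}$ to a polynomial with leader $\alpha_{1}^{2}y$ lowers the order of the leader while raising the order of other terms), so reducing a transform of order $\leq r$ modulo $\mathcal{A}$ can introduce terms of order $>r$; this is precisely why ``transform'' is defined orthant-by-orthant in Definition 3.3 and why the identity only holds for $r$ beyond a threshold depending on $\mathcal{A}$. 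You flag this as ``the genuinely hard step'' but do not supply the argument, so as written the proof of (i) is incomplete.

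Second, your derivation of the representation $\psi_{\eta|K}(t)=\sum_{i=0}^{m}a_{i}2^{i}\binom{t+i}{i}$ with $a_{i}\in\mathbb{Z}$ from ``one factor of $2$ per negative-coordinate choice'' does not work. What Theorem 2.9(ii) gives for an individual $\phi_{A_{j}}$ is only the divisibility $2^{m}\mid a_{m}$ of the \emph{top} coefficient; it says nothing about the lower coefficients, and in fact the polynomial $\psi_{Forw}(t)=5t$ computed in Section 4 (the $\sigma^{\ast}$-dimension polynomial of a bona fide $\sigma^{\ast}$-field extension with $m=2$) cannot be written as $a_{0}+2a_{1}(t+1)$ with $a_{1}\in\mathbb{Z}$. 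So either this part of (ii) requires a normalization or hypothesis you have not identified, or it fails as stated; either way it cannot be obtained by the route you indicate, and you should not present it as a consequence of Theorem 2.10. The invariance argument in (iii) via $\psi_{\eta}(r-s)\leq\psi_{\eta'}(r)\leq\psi_{\eta}(r+s)$ is standard and does give the invariance of $d$ and the leading coefficient, but the identification of $a_{m}$ with $\sigma$-$\trdeg_{K}L$ still needs the two-sided estimate comparing $\trdeg_{K}K(\Gamma(r)\eta)$ with $k\cdot\Card\Gamma(r)$ for a $\sigma^{\ast}$-transcendence basis of size $k$, which you assert rather than prove.
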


The polynomial $\psi_{\eta|K}(t)$ is called  the {\bf inversive
difference (or $\sigma^{\ast}$-) dimension polynomial} of the
$\sigma^{\ast}$-field extension $L/K$ associated with the set of
$\sigma^{\ast}$-generators $\eta = \{\eta_{1}, \dots , \eta_{n}\}$.

\smallskip

Let $K$ be a difference (respectively, inversive difference) field
with a basic set $\sigma$ and $R$ an algebra of $\sigma$-
(respectively, $\sigma^{\ast}$-) polynomials in difference
indeterminates $y_{1},\dots, y_{n}$ over $K$. Let $P$ be a prime
inversive difference ideal of $R$ and  $\eta = (\eta_{1},\dots,
\eta_{n})$ a generic zero of $P$. Then the dimension polynomial
$\phi_{\eta|K}(t)$ (respectively, $\psi_{\eta|K}(t)$) associated
with the $\sigma$- (respectively, $\sigma^{\ast}$-) field extension
$K\langle \eta_{1},\dots, \eta_{n}\rangle/K$ (respectively,
$K\langle \eta_{1},\dots, \eta_{n}\rangle^{\ast}/K$) is called the
$\sigma$- (respectively, $\sigma^{\ast}$-) {\em dimension
polynomial} of the ideal $P$. It is denoted by $\phi_{P}(t)$
(respectively, $\psi_{P}(t)).$

With the above notation, the inversive difference (or
$\sigma^{\ast}$-) dimension polynomial of a prime system of
algebraic difference equations $g_{i}(y_{1},\dots, y_{n}) = 0$
($i\in I$) is defined as the $\sigma^{\ast}$-dimension polynomial of
the prime $\sigma^{\ast}$-ideal $P$ generated by the
$\sigma^{\ast}$-polynomials $g_{i}$ in $K\{y_{1},\dots,
y_{n}\}^{\ast}$. This dimension polynomial has an interesting
interpretation as a measure of strength in the sense of A. Einstein.
Considering a system of equations in finite differences over a field
of functions in several real variables, one can use A. Einstein's
approach to define the concept of strength of such a system as
follows.  Let
\begin{equation}A_{i}(f_{1},\dots, f_{n}) = 0\end{equation} be a system of equations
in finite differences with respect to $n$ unknown grid functions
$f_{1},\dots, f_{n}$ in $m$ real variables $x_{1},\dots, x_{m}$ with
coefficients in some functional field $K$ over $\mathbb{R}$. We also
assume that the difference grid, whose nodes form the domain of
considered functions, has equal cells of dimension
$h_{1}\times\dots\times h_{m}$ \,($h_{1},\dots, h_{m}\in\mathbb{R}$)
and fills the whole space $\mathbb{R}^{m}$. As an example, one can
consider a field $K$ consisting of the zero function and fractions
of the form $u/v$ where $u$ and $v$ are grid functions defined
almost everywhere and vanishing at a finite number of nodes. (As
usual, we say that a grid function is defined almost everywhere if
there are only finitely many nodes where it is not defined.)

Let us fix some node $\mathcal{P}$ and say that {\em a node
$\mathcal{Q}$ has order $i$} (with respect to $\mathcal{P}$) if the
shortest path from $\mathcal{P}$ to $\mathcal{Q}$ along the edges of
the grid consists of $i$ steps (by a step we mean a path from a node
of the grid to a neighbor node along the edge between these two
nodes). For example, the orders of the nodes in the two-dimensional
case are as follows (a number near a node shows the order of this
node). \setlength{\unitlength}{1cm}
\begin{picture}(9,4.9)
\put(1.6,0.3){\line(1,0){0.7}} \put (2.3,0.30){\circle*{0.1}}
\put(2.3,0.3){\line(1,0){1}} \put (3.3,0.30){\circle*{0.1}} \put
(4.3,0.30){\circle*{0.1}} \put (6.3,0.30){\circle*{0.1}} \put
(5.3,0.30){\circle*{0.1}} \put(3.3,0.3){\line(1,0){1}}
\put(4.3,0.3){\line(1,0){1}} \put(5.3,0.3){\line(1,0){1}}
\put(6.3,0.3){\line(1,0){1}} \put (7.3,0.30){\circle*{0.1}}
\put(7.3,0.3){\line(1,0){1}} \put (8.3,0.30){\circle*{0.1}}
\put(8.3,0.3){\line(1,0){0.7}} \put(2.3,0.3){\line(0,-1){0.2}}
\put(3.3,0.3){\line(0,-1){0.2}} \put(4.3,0.3){\line(0,-1){0.2}}
\put(5.3,0.3){\line(0,-1){0.2}} \put(6.3,0.3){\line(0,-1){0.2}}
\put(7.3,0.3){\line(0,-1){0.2}} \put(8.3,0.3){\line(0,-1){0.2}}
\put(2.3,0.3){\line(0,1){1}} \put(3.3,0.3){\line(0,1){1}}
\put(4.3,0.3){\line(0,1){1}} \put(5.3,0.3){\line(0,1){1}}
\put(6.3,0.3){\line(0,1){1}} \put(7.3,0.3){\line(0,1){1}}
\put(8.3,0.3){\line(0,1){1}} \put(7.3,1.30){\circle*{0.1}}
\put(2.3,1.30){\circle*{0.1}} \put(3.3,1.30){\circle*{0.1}}
\put(4.3,1.30){\circle*{0.1}} \put(5.3,1.30){\circle*{0.1}}
\put(6.3,1.30){\circle*{0.1}} \put(8.3,1.30){\circle*{0.1}}
\put(1.6,1.3){\line(1,0){0.7}} \put(2.3,1.3){\line(1,0){1}}
\put(3.3,1.3){\line(1,0){1}} \put(4.3,1.3){\line(1,0){1}}
\put(5.3,1.3){\line(1,0){1}} \put(6.3,1.3){\line(1,0){1}}
\put(7.3,1.3){\line(1,0){1}} \put(8.3,1.3){\line(1,0){0.7}}

\put(2.3,2.3){\circle*{0.1}} \put(3.3,2.3){\circle*{0.1}}
\put(4.3,2.3){\circle*{0.1}} \put(5.3,2.3){\circle*{0.1}}
\put(6.3,2.3){\circle*{0.1}} \put(7.3,2.3){\circle*{0.1}}
\put(8.3,2.3){\circle*{0.1}}

\put(2.3,3.3){\circle*{0.1}} \put(3.3,3.3){\circle*{0.1}}
\put(4.3,3.3){\circle*{0.1}} \put(5.3,3.3){\circle*{0.1}}
\put(6.3,3.3){\circle*{0.1}} \put(7.3,3.3){\circle*{0.1}}
\put(8.3,3.3){\circle*{0.1}}

\put(2.3,4.3){\circle*{0.1}} \put(3.3,4.3){\circle*{0.1}}
\put(4.3,4.3){\circle*{0.1}} \put(5.3,4.3){\circle*{0.1}}
\put(6.3,4.3){\circle*{0.1}} \put(7.3,4.3){\circle*{0.1}}
\put(8.3,4.3){\circle*{0.1}}

\put(1.6,2.3){\line(1,0){0.7}} \put(2.3,2.3){\line(1,0){1}}
\put(3.3,2.3){\line(1,0){1}} \put(4.3,2.3){\line(1,0){1}}
\put(5.3,2.3){\line(1,0){1}} \put(6.3,2.3){\line(1,0){1}}
\put(7.3,2.3){\line(1,0){1}} \put(8.3,2.3){\line(1,0){0.7}}

\put(1.6,3.3){\line(1,0){0.7}} \put(2.3,3.3){\line(1,0){1}}
\put(3.3,3.3){\line(1,0){1}} \put(4.3,3.3){\line(1,0){1}}
\put(5.3,3.3){\line(1,0){1}} \put(6.3,3.3){\line(1,0){1}}
\put(7.3,3.3){\line(1,0){1}} \put(8.3,3.3){\line(1,0){0.7}}

\put(1.6,4.3){\line(1,0){0.7}} \put(2.3,4.3){\line(1,0){1}}
\put(3.3,4.3){\line(1,0){1}} \put(4.3,4.3){\line(1,0){1}}
\put(5.3,4.3){\line(1,0){1}} \put(6.3,4.3){\line(1,0){1}}
\put(7.3,4.3){\line(1,0){1}} \put(8.3,4.3){\line(1,0){0.7}}

\put(2.3,1.3){\line(0,1){1}} \put(3.3,1.3){\line(0,1){1}}
\put(4.3,1.3){\line(0,1){1}} \put(5.3,1.3){\line(0,1){1}}
\put(6.3,1.3){\line(0,1){1}} \put(7.3,1.3){\line(0,1){1}}
\put(8.3,1.3){\line(0,1){1}}

\put(2.3,2.3){\line(0,1){1}} \put(3.3,2.3){\line(0,1){1}}
\put(4.3,2.3){\line(0,1){1}} \put(5.3,2.3){\line(0,1){1}}
\put(6.3,2.3){\line(0,1){1}} \put(7.3,2.3){\line(0,1){1}}
\put(8.3,2.3){\line(0,1){1}}

\put(2.3,3.3){\line(0,1){1}} \put(3.3,3.3){\line(0,1){1}}
\put(4.3,3.3){\line(0,1){1}} \put(5.3,3.3){\line(0,1){1}}
\put(6.3,3.3){\line(0,1){1}} \put(7.3,3.3){\line(0,1){1}}
\put(8.3,3.3){\line(0,1){1}}

\put(2.3,4.3){\line(0,1){0.5}} \put(3.3,4.3){\line(0,1){0.5}}
\put(4.3,4.3){\line(0,1){0.5}} \put(5.3,4.3){\line(0,1){0.5}}
\put(6.3,4.3){\line(0,1){0.5}} \put(7.3,4.3){\line(0,1){0.5}}
\put(8.3,4.3){\line(0,1){0.5}}

\put(5.15,2.1){\makebox(0,0){$\mathcal{P}$}}
\put(4.15,2.1){\makebox(0,0){$1$}}
\put(6.15,2.1){\makebox(0,0){$1$}}
\put(3.15,2.1){\makebox(0,0){$2$}}
\put(7.15,2.1){\makebox(0,0){$2$}}
\put(2.15,2.1){\makebox(0,0){$3$}}
\put(8.15,2.1){\makebox(0,0){$3$}}

\put(5.15,1.1){\makebox(0,0){$1$}}
\put(5.15,3.1){\makebox(0,0){$1$}}
\put(4.15,1.1){\makebox(0,0){$2$}}
\put(6.15,1.1){\makebox(0,0){$2$}}
\put(3.15,1.1){\makebox(0,0){$3$}}
\put(7.15,1.1){\makebox(0,0){$3$}}
\put(2.15,1.1){\makebox(0,0){$4$}}
\put(8.15,1.1){\makebox(0,0){$4$}}

\put(5.15,4.1){\makebox(0,0){$2$}}
\put(5.15,0.1){\makebox(0,0){$2$}}
\put(4.15,0.1){\makebox(0,0){$3$}}
\put(6.15,4.1){\makebox(0,0){$3$}}
\put(3.15,0.1){\makebox(0,0){$4$}}
\put(7.15,4.1){\makebox(0,0){$4$}}
\put(2.15,0.1){\makebox(0,0){$5$}}
\put(8.15,4.1){\makebox(0,0){$5$}}

\put(6.15,0.1){\makebox(0,0){$3$}}
\put(7.15,0.1){\makebox(0,0){$4$}}
\put(8.15,0.1){\makebox(0,0){$5$}}

\put(6.15,4.1){\makebox(0,0){$3$}}
\put(7.15,4.1){\makebox(0,0){$4$}}
\put(8.15,4.1){\makebox(0,0){$5$}}

\put(6.15,3.1){\makebox(0,0){$2$}}
\put(7.15,3.1){\makebox(0,0){$3$}}
\put(8.15,3.1){\makebox(0,0){$4$}}

\put(4.15,4.1){\makebox(0,0){$3$}}
\put(3.15,4.1){\makebox(0,0){$4$}}
\put(2.15,4.1){\makebox(0,0){$5$}}

\put(4.15,3.1){\makebox(0,0){$2$}}
\put(3.15,3.1){\makebox(0,0){$3$}}
\put(2.15,3.1){\makebox(0,0){$4$}}
\end{picture}\\
Let us consider the values of the unknown grid functions
$f_{1},\dots, f_{n}$ at the nodes whose order does not exceed $r$\,
($r\in\mathbb{N}$). If $f_{1},\dots, f_{n}$ should not satisfy any
system of equations (or any other condition), their values at nodes
of any order can be chosen arbitrarily. Because of the system in
finite differences (and equations obtained from the equations of the
system by transformations of the form $f_{j}(x_{1},\dots,
x_{m})\mapsto f_{j}(x_{1}+k_{1}h_{1},\dots, x_{m}+k_{m}h_{m})$ with
$k_{1},\dots, k_{m}\in \mathbb{Z}$, $1\leq j\leq n$), the number of
independent values of the functions $f_{1},\dots, f_{n}$ at the
nodes of order $\leq r$ decreases. This number, which is a function
of $r$, is considered as the ''measure of strength'' of the system
in finite differences (in the sense of A. Einstein). We denote it by
$S_{r}$.

With the above conventions, suppose that the transformations
$\alpha_{j}$ of the field of coefficients $K$ defined by
$$\alpha_{j}f(x_{1},\dots, x_{m}) = f(x_{1},\dots,x_{j-1}, x_{j}+h_{j},\dots,
x_{m})\hspace{0.2in} (1\leq j\leq m)$$ are automorphisms of this
field. Then $K$ can be considered as an inversive difference field
with the basic set $\sigma = \{\alpha_{1},\dots, \alpha_{m}\}$.
Furthermore, assume that the replacement of the unknown functions
$f_{i}$ by $\sigma^{\ast}$-indeterminates $y_{i}$ ($i=1,\dots, n$)
in the ring $K\{y_{1},\dots, y_{n}\}^{\ast}$ leads to a prime system
of algebraic $\sigma^{\ast}$-equations (then the original system of
equations in finite differences is also called {\em prime}). The
$\sigma^{\ast}$-dimension polynomial $\psi(t)$ of the latter system
is said to be the $\sigma^{\ast}$-dimension polynomial of the given
system in finite differences.

Clearly, $\psi(r) = S_{r}$ for any $r\in \mathbb{N}$, so the
$\sigma^{\ast}$-dimension polynomial of a prime system of equations
in finite differences is the measure of strength of such a system in
the sense of A. Einstein.

\medskip

The main method of computation of difference (respectively,
inversive difference) dimension polynomials is based on the
construction of characteristic sets in the algebra difference
(respectively, inversive difference) polynomials. In what follows we
will consider the case of inversive difference ($\sigma^{\ast}$-)
polynomials, they reflect finite-difference approximations of PDEs
where shifts of arguments can be of any sign.

Let $K$ be an inversive difference field with a basic set of
automorphisms (translations) $\sigma = \{\alpha_{1},\dots,
\alpha_{m}\}$ and $\Gamma = \Gamma_{\sigma}$ the free commutative
group generated by $\sigma$. Let $\bar{\mathbb{Z}}_{-}$ denote the
set of all non-positive integers and let $\mathbb{Z}^{(m)}_{1},
\mathbb{Z}^{(m)}_{2}, \dots, \mathbb{Z}^{(m)}_{2^{m}}$ be all
orthants of $\mathbb{Z}^{m}$. Recall (see section 2) that they are
distinct Cartesian products of $m$ factors each of which is either
$\mathbb{N}$ or $\bar{\mathbb{Z}}_{-}$ (we assume that
$\mathbb{Z}^{(m)}_{1}=\mathbb{N}$). For any $j=1,\dots, 2^{m}$, we
set $\Gamma_{j} = \{\gamma = \alpha_{1}^{k_{1}}\dots
\alpha_{m}^{k_{m}}\in \Gamma\,|\,(k_{1},\dots,
k_{m})\in\mathbb{Z}^{(m)}_{j}\}.$ As before, for any element $\gamma
= \alpha_{1}^{k_{1}}\dots \alpha_{m}^{k_{m}}\in \Gamma$, the number
$\ord\,\gamma = \D\sum_{i=1}^{m}|k_{i}|$ will be called the {\em
order} of $\gamma$.

Let $K\{y_{1},\dots, y_{n}\}^{\ast}$ be the ring of
$\sigma^{\ast}$-polynomials in $\sigma^{\ast}$-indeterminates
$y_{1},\dots, y_{n}$ over $K$ and let $\Gamma Y$ denote the set
$\{\gamma y_{i} | \gamma \in \Gamma, 1\leq i\leq m \}$ whose
elements are called {\em terms} (here and below we often write
$\gamma y_{i}$ for $\gamma(y_{i}$)). By the order of a term $u =
\gamma y_{j}$ we mean the order of the element $\gamma \in \Gamma$.
Setting $(\Gamma Y)_{j} = \{\gamma y_{i} | \gamma \in \Gamma_{j},
1\leq i\leq n \}$ ($j=1,\dots, 2^{m}$) we obtain a representation of
the set of terms as a union $$\Gamma Y =
\D\bigcup_{j=1}^{2^{n}}(\Gamma Y)_{j}.$$

\begin{definition} A term $v\in \Gamma Y$ is called a transform
of a term $u\in \Gamma Y$ if and only if $u$ and $v$ belong to the
same set $(\Gamma Y)_{j} \, (1\leq j\leq 2^{m})$ and $v = \gamma u$
for some $\gamma \in \Gamma_{j}$. If $\gamma \neq 1$, $v$ is said to
be a proper transform of $u$.
\end{definition}

\begin{definition} A well-ordering of the set of terms
$\Gamma Y$ is called a ranking of the family of
$\sigma^{\ast}$-indeterminates $y_{1},\dots, y_{n}$ (or a ranking of
the set $\Gamma Y$) if it satisfies the following conditions. {\em
(}We use the standard symbol $\leq$ for the ranking; it will be
always clear what order is denoted by this symbol.{\em )}

\smallskip

{\em (i)}\, If $u\in (\Gamma Y)_{j}$ and $\gamma \in \Gamma_{j}$
($1\leq j\leq 2^{m}$), then $u\leq \gamma u$.

\smallskip

{\em (ii)}\, If $u, v\in (\Gamma Y)_{j}$ ($1\leq j\leq 2^{m}$),
$u\leq v$ and $\gamma \in \Gamma_{j}$, then $\gamma u \leq \gamma
v$.

A ranking of the $\sigma^{\ast}$-indeterminates $y_{1},\dots, y_{m}$
is called orderly if for any $j=1,\dots, 2^{m}$ and for any two
terms $u, v \in (\Gamma Y)_{j}$, the inequality $\ord\,u < \ord\,v$
implies that $u < v$ (as usual, $v < w$ means $v\leq w$ and $v\neq
w$).
\end{definition}

As an example of an orderly ranking of the
$\sigma^{\ast}$-indeterminates $y_{1},\dots, y_{s}$ one can consider
the {\em standard ranking} defined as follows: $u =
\alpha_{1}^{k_{1}}\dots \alpha_{m}^{k_{m}}y_{i}\leq v =
\alpha_{1}^{l_{1}}\dots \alpha_{m}^{l_{m}}y_{j}$ if and only if the
$(2m+2)$-tuple $(\D\sum_{\nu = 1}^{m}|k_{\nu}|, |k_{1}|,\dots,
|k_{m}|, k_{1},\dots, k_{m}, i)$ is less than or equal to the
$(2m+2)$-tuple $(\D\sum_{\nu = 1}^{m}|l_{\nu}|, |l_{1}|,\dots,
|l_{m}|, l_{1},\dots, l_{m}, j)$ with respect to the lexicographic
order on $\mathbb{Z}^{2m+2}$.

\smallskip

In what follows, we assume that an orderly ranking $\leq$ of the set
of $\sigma^{\ast}$-indeterminates $y_{1},\dots, y_{n}$ is fixed. If
$A\in K\{y_{1},\dots, y_{n}\}^{\ast}$, then the greatest (with
respect to the ranking $\leq$) term that appears in $A$ is called
the {\em leader} of $A$; it is denoted by $u_{A}$. If $u = u_{A}$
and $d=\deg_{u}A$, then the $\sigma^{\ast}$-polynomial $A$ can be
written as $A = I_{d}u^{d} + I_{d-1}u^{d-1}+\dots + I_{0}$ where
$I_{k} (0\leq k\leq d)$ do not contain $u$. The
$\sigma^{\ast}$-polynomial $I_{d}$ is called the {\em initial} of
$A$; it is denoted by $I_{A}$.

\begin{definition}
Let $A, B\in K\{y_{1}\dots, y_{n}\}$. We say that $A$ has higher
rank than $B$ and write $\rk A > \rk B$ if either $A\notin K,\, B\in
K$, or $u_{A}$ has higher rank than $u_{B}$, or $u_{A} = u_{B}$ and
$\deg_{u_{A}}A > \deg_{u_{A}}B$. If $u_{A} = u_{B}$ and
$\deg_{u_{A}}A = \deg_{u_{A}}B$, we say that $A$ and $B$ have the
same rank and write $\rk A = \rk B$.
\end{definition}

Note that distinct $\sigma^{\ast}$-polynomials can have the same
rank and if $A\notin K$, then $I_{A}$ has lower rank than $A$.

\begin{definition}
Let $A, B\in K\{y_{1},\dots, y_{n}\}^{\ast}$. The
$\sigma^{\ast}$-polynomial $A$ is said to be {\em reduced} with
respect to $B$ if $A$ does not contain any power of a transform
$\gamma u_{B}$ ($\gamma \in \Gamma$) whose exponent is greater than
or equal to $\deg_{u_{B}}B$. If $\mathcal{A}\subseteq
K\{y_{1},\dots, y_{n}\}\setminus K$, then a
$\sigma^{\ast}$-polynomial $A\in K\{y_{1},\dots, y_{n}\}$, is said
to be reduced with respect to $\mathcal{A}$ if $A$ is reduced with
respect to every element of the set $\mathcal{A}$.

\smallskip

A set $\mathcal{A}\subseteq K\{y_{1},\dots, y_{n}\}^{\ast}$ is said
to be autoreduced if either it is empty or $\mathcal{A}\bigcap K =
\emptyset$ and every element of $\mathcal{A}$ is reduced with
respect to all other elements of the set $\mathcal{A}$.
\end{definition}

The proof of the following proposition can be obtained by mimicking
the proof of the corresponding statement about autoreduced sets of
differential polynomials, see \cite[Chapter 2]{Kolchin2}.

\begin{proposition}
Every autoreduced set is finite and distinct elements of an
autoreduced set have distinct leaders.
\end{proposition}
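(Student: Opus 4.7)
The plan is to handle the two assertions in turn, starting with the claim about distinct leaders. Suppose $A, B$ are distinct elements of an autoreduced set $\mathcal{A}$ with $u_A = u_B = u$. Writing $u \in (\Gamma Y)_j$, note that the identity $1$ of $\Gamma$ lies in every $\Gamma_j$, since each orthant $\mathbb{Z}_j^{(m)}$ contains the origin of $\mathbb{Z}^m$. Applying Definition 3.4 with $\gamma = 1$, the requirement that $A$ be reduced with respect to $B$ means that $A$ contains no power of $u$ with exponent at least $\deg_u B$; since $u^{\deg_u A}$ does appear in $A$, this forces $\deg_u A < \deg_u B$. Reversing the roles of $A$ and $B$ yields $\deg_u B < \deg_u A$, a contradiction.

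For finiteness, I argue by contradiction, assuming $\mathcal{A}$ is infinite. By what was just proved, the leaders $u_A$, $A \in \mathcal{A}$, are pairwise distinct. Since $\Gamma Y = \bigcup_{j=1}^{2^{m}}(\Gamma Y)_j$ and the index $i$ of $y_i$ ranges over a finite set, pigeonholing on the $n \cdot 2^m$ possible pairs $(i,j)$ yields an infinite sequence $A_1, A_2, \dots$ in $\mathcal{A}$ whose leaders have the form $u_{A_k} = \gamma_k y_i$ with $\gamma_k \in \Gamma_j$ for one fixed pair $(i,j)$.

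The next step invokes Dickson's lemma. The exponent vectors of the $\gamma_k$ form an infinite sequence in $\mathbb{Z}_j^{(m)}$, which under coordinate-wise absolute value becomes an infinite sequence in $\mathbb{N}^m$. Since $(\mathbb{N}^m, \leq_P)$ is a well-partial-order, I extract an infinite subsequence along which the absolute-value vectors are non-decreasing; after renaming, one obtains $\gamma_{k+1} = \delta_k \gamma_k$ with $\delta_k \in \Gamma_j$ (the componentwise difference preserves the orthant) and $\delta_k \neq 1$ because the leaders are distinct. Hence each $u_{A_{k+1}} = \delta_k u_{A_k}$ is a proper transform of $u_{A_k}$ in the sense of Definition 3.1. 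The monomial $(\delta_k u_{A_k})^{\deg_{u_{A_{k+1}}} A_{k+1}}$ occurs in $A_{k+1}$; the fact that $A_{k+1}$ is reduced with respect to $A_k$ then forces $\deg_{u_{A_{k+1}}} A_{k+1} < \deg_{u_{A_k}} A_k$. Iterating produces an infinite strictly decreasing sequence of positive integers, the desired contradiction.

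The main obstacle is the orthant bookkeeping: one must verify that the quotient $\gamma_{k+1}/\gamma_k$ really lies in the same $\Gamma_j$, so that the proper-transform relation of Definition 3.1 is available, and that Dickson's lemma can legitimately be applied inside a single orthant rather than across all of $\Gamma$. Once these points are settled, the argument mirrors the classical Ritt--Kolchin finiteness proof for autoreduced sets of differential polynomials in \cite[Chapter 2]{Kolchin2}.
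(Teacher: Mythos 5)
Your proof is correct, and it is precisely the Ritt--Kolchin argument (distinct leaders via reducedness at $\gamma=1$, then pigeonhole on indeterminate and orthant plus Dickson's lemma to force an infinite descent of leader degrees) that the paper itself does not write out but delegates to \cite[Chapter 2]{Kolchin2}. The orthant bookkeeping you flag is handled correctly: within a fixed $\Gamma_j$ the sign pattern is constant, so the componentwise quotient $\gamma_{k+1}\gamma_k^{-1}$ indeed lies in $\Gamma_j$ and Definition 3.1's notion of proper transform applies.
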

\begin{theorem}{\em (\cite[Theorem 2.4.7]{Levin1})} Let $\mathcal{A} =
\{A_{1},\dots, A_{p}\}$ be an autoreduced subset in the ring of
$\sigma^{\ast}$-polynomials  $K\{y_{1},\dots, y_{n}\}^{\ast}$ and
let $D\in K\{y_{1},\dots, y_{n}\}^{\ast}$. Furthermore, let
$I(\mathcal{A})$ denote the set of all $\sigma^{\ast}$-polynomials
$B\in K\{y_{1},\dots, y_{n}\}$ such that either $B =1$ or $B$ is a
product of finitely many polynomials of the form $\gamma(I_{A_{i}})$
where $\gamma \in \Gamma,\, i=1,\dots, p$. Then there exist
$\sigma^{\ast}$-polynomials $J\in I(\mathcal{A})$ and $D_{0}\in
K\{y_{1},\dots, y_{n}\}$ such that $D_{0}$ is reduced with respect
to $\mathcal{A}$ and $JD\equiv D_{0} (mod\, [\mathcal{A}])$.
\end{theorem}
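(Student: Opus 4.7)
The result is the inversive-difference analog of Kolchin's standard reduction lemma in differential algebra, so I follow the same overall strategy: construct $J$ and $D_0$ iteratively by a reduction process on $D$ that strictly decreases a well-founded measure. Set $d_i := \deg_{u_{A_i}} A_i$ for $1 \le i \le p$ and call a term $v \in \Gamma Y$ \emph{bad} for $D$ if $v$ is a transform $\gamma u_{A_i}$ of some leader $u_{A_i}$ (so $\gamma$ and $u_{A_i}$ lie in the same orthant of $\Gamma$) and some monomial of $D$ involves $v^e$ with $e \ge d_i$. If $D$ has no bad term then $D$ is already reduced with respect to $\mathcal{A}$, and I take $J = 1$, $D_0 = D$; this is the base case.

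Otherwise let $v = \gamma u_{A_i}$ be the highest bad term of $D$ with respect to the fixed orderly ranking, and let $e \ge d_i$ be the largest exponent of $v$ appearing in a monomial of $D$. Writing $D = C v^e + D'$ with $\deg_v D' < e$ and $C$ free of $v$, and $A_i = I_{A_i} u_{A_i}^{d_i} + R_i$ with $R_i$ free of $u_{A_i}$, applying $\gamma$ yields $\gamma A_i = \gamma(I_{A_i}) v^{d_i} + \gamma R_i$, and $\gamma A_i \in [\mathcal{A}]$ since $[\mathcal{A}]$ is a $\sigma^{\ast}$-ideal containing $\mathcal{A}$. Setting
$$D_1 := \gamma(I_{A_i})\,D - C v^{e - d_i}\, \gamma A_i = \gamma(I_{A_i})\,D' - C v^{e - d_i}\, \gamma R_i,$$
one obtains $\gamma(I_{A_i})\,D \equiv D_1 \pmod{[\mathcal{A}]}$, and $D_1$ has degree in $v$ strictly less than $e$.

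The key point is that the pair (highest bad term, its maximal exponent), ordered lexicographically by the ranking on the first coordinate and by $\mathbb{N}$ on the second, strictly decreases when $D$ is replaced by $D_1$: the factor $\gamma(I_{A_i})$ is free of $v$ (because $I_{A_i}$ is free of $u_{A_i}$), and its terms lie strictly below $v$ by orderliness of the ranking together with the monotonicity property within each orthant; similarly $\gamma R_i$ contains no transforms $\delta u_{A_j}$ at degree $\ge d_j$ that lie at or above $v$. Hence iterating this reduction yields, in finitely many steps, a $\sigma^{\ast}$-polynomial $D_0$ reduced with respect to $\mathcal{A}$; the product of the various initials $\gamma(I_{A_i})$ accumulated along the way is the required $J \in I(\mathcal{A})$, and $JD \equiv D_0 \pmod{[\mathcal{A}]}$.

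The main obstacle is verifying the strict decrease of the measure, specifically that neither $\gamma(I_{A_i})\,D'$ nor $C v^{e-d_i}\, \gamma R_i$ introduces a new bad term at or above $v$ in the ranking. The delicate case arises when terms of $I_{A_i}$ or $R_i$ sit in orthants different from that of $u_{A_i}$, since the ranking is only monotone under translations within a single orthant; one has to argue, using orderliness and the maximal choice of $(v,e)$, that such cross-orthant transforms still fall strictly below $v$, which is the technical heart of the argument in the inversive setting as opposed to the standard differential or non-inversive cases.
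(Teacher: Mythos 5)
The paper itself contains no proof of this statement: it is quoted verbatim from \cite[Theorem 2.4.7]{Levin1}, with only the remark that the reduction process is algorithmic. So your attempt can only be measured against the standard proof in that source, and your overall strategy is indeed the standard one — iteratively multiply by a transformed initial $\gamma(I_{A_i})$, subtract $Cv^{e-d_i}\gamma A_i$, and argue that a well-founded measure attached to $D$ strictly decreases.

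The difficulty is that you do not actually carry out the one step on which the whole induction rests, and you say so yourself. Everything hinges on the claim that every term of $\gamma A_i$ other than $v=\gamma u_{A_i}$ lies strictly below $v$ in the ranking. If that fails, the measure need not decrease, for two reasons: a term $\gamma w$ with $w<u_{A_i}$ but $\gamma w>v$ could be a transform of some leader and enter $D_1$ with a large exponent; and, more insidiously, even when $\gamma R_i$ contains some transform $\delta u_{A_j}>v$ only to degree $<d_j$, the product $Cv^{e-d_i}\gamma R_i$ carries $\delta u_{A_j}$ to degree $\deg_{\delta u_{A_j}}C+\deg_{\delta u_{A_j}}\gamma R_i$, which can reach $d_j$ and create a bad term above $v$ that was not bad in $D$. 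The needed inequality $\gamma w<\gamma u_{A_i}$ does not follow from the ranking axioms as stated: condition (ii) of Definition 3.4 gives monotonicity only when $w$ and $u_{A_i}$ lie in the same set $(\Gamma Y)_j$ with $\gamma\in\Gamma_j$, and for $w$ in a different orthant one must argue separately (for instance by comparing $\ord(\gamma w)\le\ord\gamma+\ord w$ with $\ord(\gamma u_{A_i})=\ord\gamma+\ord u_{A_i}$, the latter an equality because $\gamma$ and $u_{A_i}$ share an orthant, and then handling the case of equal orders, where within-orthant orderliness alone is not enough). You explicitly defer this as ``the technical heart,'' which means the termination argument — and hence the theorem — is not established. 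One smaller slip: in $A_i=I_{A_i}u_{A_i}^{d_i}+R_i$ the tail $R_i$ is not free of $u_{A_i}$; it merely has degree at most $d_i-1$ in it. Your conclusion $\deg_v D_1<e$ survives this correction, but the statement as written is false.
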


Note that, with the notation of the last theorem, the process of
reduction that leads to the $\sigma^{\ast}$-polynomials $J\in
I(\mathcal{A})$ and $D_{0}$ is algorithmic; the steps of the
corresponding algorithm can be obtained by mimicking the steps in
the proof of Theorem 2.4.1 of \cite{Levin1}. The
$\sigma^{\ast}$-polynomial $D_{0}$ is called the {\em remainder} of
$D$ with respect to $\mathcal{A}$. We also say that $D$ {\em reduces
to $D_{0}$ modulo $\mathcal{A}$}.

In what follows elements of an autoreduced set in $K\{y_{1},\dots,
y_{n}\}^{\ast}$ will be always written in the order of increasing
rank. With this assumption we introduce the following partial order
on the set of all autoreduced sets.

\begin{definition}
Let $\mathcal{A} = \{A_{1},\dots, A_{p}\}$ and $\mathcal{B} =
\{B_{1},\dots, B_{q}\}$ be two autoreduced sets of
$\sigma^{\ast}$-polynomials in $K\{y_{1},\dots, y_{n}\}^{\ast}$. We
say that $\mathcal{A}$ has lower rank than $\mathcal{B}$ and write
$\rk\mathcal{A} < \rk\mathcal{B}$ if either there exists
$k\in\mathbb{N},\, 1\leq k\leq \min\{p, q\}$, such that $\rk A_{i} =
\rk B_{i}$ for $i=1,\dots, k-1$ and $\rk A_{k} < \rk B_{k}$, or $p >
q$ and $\rk A_{i} = \rk B_{i}$ for $i=1,\dots, q$.
\end{definition}

Mimicking the arguments of \cite[Chapter 1, Section 9]{Kolchin2},
one obtains that every nonempty family of autoreduced subsets of
$K\{y_{1},\dots, y_{n}\}^{\ast}$ contains an autoreduced set of
lowest rank. In particular, if $\emptyset\neq J\subseteq
F\{y_{1},\dots, y_{n}\}^{\ast}$, then the set $J$ contains an
autoreduced set of lowest rank called a {\bf characteristic set} of
$J$.

\begin{proposition}{\em (\cite[Proposition 2.4.8]{Levin1})}
Let $K$ be an inversive difference field with a basic set $\sigma$,
$J$ a $\sigma^{\ast}$-ideal of the algebra of $\sigma$-polynomials
$K\{y_{1},\dots, y_{n}\}^{\ast}$, and $\mathcal{A}$ a characteristic
set of $J$. Then

\smallskip

{\em (i)}\, The ideal $J$ does not contain nonzero
$\sigma^{\ast}$-polynomials reduced with respect to $\mathcal{A}$.
In particular, if $A\in\mathcal{A}$, then $I_{A}\notin J$.

\smallskip

{\em (ii)}\, If $J$ is a prime $\sigma^{\ast}$-ideal, then $J =
[\mathcal{A}]^{\ast}:\Upsilon(\mathcal{A})$ where
$\Upsilon(\mathcal{A})$ denotes the set of all finite products of
elements of the form $\gamma(I_{A})$ ($\gamma \in \Gamma_{\sigma},
A\in\mathcal{A}$).
\end{proposition}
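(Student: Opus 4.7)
The plan is to prove part~(i) by a minimality argument on ranks of autoreduced sets, and then deduce part~(ii) by combining (i) with the reduction procedure of Theorem~3.7 together with the reflexivity and primality of $J$. The ``in particular'' clause of (i) and both inclusions of (ii) drop out once (i) is in hand.

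For (i), I would argue by contradiction: suppose $D \in J$ is nonzero and reduced with respect to $\mathcal{A} = \{A_{1},\dots,A_{p}\}$ (listed in order of increasing rank). Reducedness of $D$ with respect to each $A_{i}$ first rules out $\rk D = \rk A_{i}$, since equality would force $u_{D} = u_{A_{i}}$ and $\deg_{u_{D}} D = \deg_{u_{A_{i}}} A_{i}$, meaning $D$ contains $u_{A_{i}}^{\deg_{u_{A_{i}}} A_{i}}$, contradicting reducedness. Letting $k$ be the smallest index with $\rk A_{k} > \rk D$ (or $k = p+1$ if none exists), set $\mathcal{B} = \{A_{1},\dots,A_{k-1},D\} \subseteq J$ and verify: (a) $\mathcal{B}$ is autoreduced---$D$ is reduced with respect to each $A_{i}$ by hypothesis, and each $A_{i}$ with $i<k$ is reduced with respect to $D$ because $\rk A_{i} < \rk D$ combined with Definition~3.1 (a transform of $u_{D}$ must lie in the same orthant as $u_{D}$ and hence is $\geq u_{D}$ by the ranking axioms) prevents any power of a transform of $u_{D}$ from appearing in $A_{i}$ with exponent $\geq \deg_{u_{D}} D$; (b) $\rk \mathcal{B} < \rk \mathcal{A}$, by clause~(a) of Definition~3.9 when $k \leq p$ (using $\rk D < \rk A_{k}$) and by clause~(b) when $k = p+1$ (since $\mathcal{B}$ strictly extends $\mathcal{A}$ with matching prefix). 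This contradicts $\mathcal{A}$ being characteristic. The ``in particular'' assertion follows because $I_{A}$ does not involve $u_{A}$ and, being a coefficient of $A$, inherits reducedness with respect to the other elements of $\mathcal{A}$ from $A$ itself; hence $I_{A}$ is reduced with respect to $\mathcal{A}$, and $I_{A} \in J$ would force $I_{A} = 0$, which is impossible.

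For (ii), the containment $[\mathcal{A}]^{\ast} : \Upsilon(\mathcal{A}) \subseteq J$ uses reflexivity and primality: since $J$ is a $\sigma^{\ast}$-ideal, part~(i) gives $I_{A} \notin J$, and reflexivity promotes this to $\gamma(I_{A}) \notin J$ for every $\gamma \in \Gamma$ and $A \in \mathcal{A}$; primality then excludes every finite product in $\Upsilon(\mathcal{A})$ from $J$, so $Pf \in [\mathcal{A}]^{\ast} \subseteq J$ with $P \in \Upsilon(\mathcal{A})$ forces $f \in J$. Conversely, for $f \in J$, Theorem~3.7 produces $P \in I(\mathcal{A})$ and a remainder $f_{0}$ reduced with respect to $\mathcal{A}$ with $Pf - f_{0} \in [\mathcal{A}] \subseteq J$; hence $f_{0} \in J$, and by part~(i) $f_{0} = 0$, so $Pf \in [\mathcal{A}]^{\ast}$ and $f \in [\mathcal{A}]^{\ast} : \Upsilon(\mathcal{A})$. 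The main obstacle is the autoreducedness check in~(i)---specifically, ruling out that some $A_{i}$ of lower rank contains a high enough power of a transform of $u_{D}$ across orthants. This is precisely where the inversive restriction on what counts as a transform (Definition~3.1, confining $\gamma$ to the orthant $\Gamma_{j}$ of $u_{D}$) and the ranking axiom $u \leq \gamma u$ within that orthant are indispensable.
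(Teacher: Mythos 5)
The paper does not prove this proposition---it is quoted from \cite[Proposition 2.4.8]{Levin1}---and your argument is precisely the standard one given there: part (i) by adjoining the putative nonzero reduced element $D$ to the initial segment $\{A_{1},\dots,A_{k-1}\}$ to produce an autoreduced subset of $J$ of strictly lower rank, and part (ii) by combining (i) with the reduction theorem in one direction and with reflexivity plus primality (to keep the products of transformed initials out of $J$) in the other. Both inclusions and the autoreducedness check are handled correctly (the only implicit hypothesis, as in the source, is that $J$ is proper, so that $D\notin K$ and the leader $u_{D}$ is defined); the references to ``Theorem 3.7'' and ``Definition 3.1'' should read Theorem 3.8 and Definition 3.3 in the paper's numbering.
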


If $K$ is an inversive difference ($\sigma^{\ast}$-) field, then a
$\sigma^{\ast}$-ideal of the ring of $\sigma^{\ast}$-polynomials
$K\{y_{1},\dots, y_{n}\}^{\ast}$ is called {\em linear} if it is
generated (as a $\sigma^{\ast}$-ideal) by homogeneous linear
$\sigma^{\ast}$-polynomials, i. e., $\sigma^{\ast}$-polynomials of
the form $\D\sum_{i=1}^{p}a_{i}\gamma_{i}y_{k_{i}}$ ($a_{i}\in K,
\gamma_{i}\in \Gamma, 1\leq k_{i}\leq n$ for $i=1,\dots, p$). As it
is shown in \cite[Proposition 2.4.9]{Levin1}, every linear
$\sigma^{\ast}$-ideal in $K\{y_{1},\dots, y_{n}\}^{\ast}$ is prime.

\begin{definition} Let $K$ be an inversive difference ($\sigma^{\ast}$-)
field and $\mathcal{A}$ an autoreduced set in $K\{y_{1},\dots,
y_{n}\}^{\ast}$ that consists of linear $\sigma^{\ast}$-polynomials.
The set $\mathcal{A} $ is called {\bf coherent} if the following two
conditions hold.

\smallskip

(i)\, If $A\in\mathcal{A}$ and $\gamma \in \Gamma$, then $\gamma A$
reduces to zero modulo $\mathcal{A}$.

\smallskip

(ii)\, If $A, B\in\mathcal{A}$ and $v = \gamma_{1}u_{A} =
\gamma_{2}u_{B}$ is a common transform of the leaders $u_{A}$ and
$u_{B}$ ($\gamma_{1}, \gamma_{2}\in\Gamma$), then the
$\sigma^{\ast}$-polynomial $(\gamma_{2}I_{B})(\gamma_{1}A) -
(\gamma_{1}I_{A})(\gamma_{2}B)$ reduces to zero modulo
$\mathcal{A}$.
\end{definition}

\begin{theorem}{\em (\cite[Theorem 6.5.3]{KLMP})}
Let $K$ be an inversive difference ($\sigma^{\ast}$-) field and $J$
a linear $\sigma^{\ast}$-ideal of $K\{y_{1},\dots, y_{n}\}^{\ast}$.
Then any characteristic set of $J$ is a coherent autoreduced set of
linear $\sigma^{\ast}$-polynomials. Conversely, if
$\mathcal{A}\subseteq K\{y_{1},\dots, y_{n}\}^{\ast}$ is any
coherent autoreduced set consisting of linear
$\sigma^{\ast}$-polynomials, then $\mathcal{A}$ is a characteristic
set of the linear $\sigma^{\ast}$-ideal $[\mathcal{A}]^{\ast}$.
\end{theorem}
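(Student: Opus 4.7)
Let $\mathcal{A} = \{A_{1},\dots,A_{p}\}$ (ordered by increasing rank) be a characteristic set of the linear $\sigma^{\ast}$-ideal $J$. To show each $A_{i}$ is linear, I would exploit that $J$, being generated as an ordinary ideal by degree-one polynomials and their $\Gamma$-transforms, is homogeneous with respect to the $\mathbb{N}$-grading of $K\{y_{1},\dots,y_{n}\}^{\ast}$ by total degree; hence every homogeneous component of $A_{i}$ again lies in $J$. Combined with the primality of linear $\sigma^{\ast}$-ideals (Proposition 2.4.9 of \cite{Levin1}, cited in the excerpt) and the characterization of characteristic sets (Proposition 2.4.8 of \cite{Levin1}: $J$ contains no nonzero $\sigma^{\ast}$-polynomial reduced with respect to $\mathcal{A}$, and $I_{A_{i}} \notin J$), one deduces that if some $A_{i}$ had total degree exceeding one, a well-chosen homogeneous component of $A_{i}$ would yield an autoreduced subset of $J$ of strictly lower rank than $\mathcal{A}$, contradicting minimality. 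Both coherence conditions then follow by a common mechanism: the polynomials $\gamma A$ (for condition (i)) and $(\gamma_{2}I_{B})(\gamma_{1}A) - (\gamma_{1}I_{A})(\gamma_{2}B)$ (for condition (ii)) both lie in $J$, so by the reduction theorem (Theorem 2.4.7 of \cite{Levin1}) together with the characterization of characteristic sets, their remainders upon reduction modulo $\mathcal{A}$ are reduced elements of $J$, hence zero.

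\textbf{Converse direction.} Let $\mathcal{A}$ be a coherent autoreduced set of linear $\sigma^{\ast}$-polynomials and set $J := [\mathcal{A}]^{\ast}$. I would first prove the preliminary lemma that any autoreduced $\mathcal{A} \subseteq J$ for which $J$ contains no nonzero element reduced with respect to $\mathcal{A}$ is a characteristic set of $J$. For this, given an autoreduced $\mathcal{B} = \{B_{1},\dots,B_{q}\} \subseteq J$ with $\rk\mathcal{B} < \rk\mathcal{A}$ and an appropriate index $k$ at which $\mathcal{B}$ strictly beats $\mathcal{A}$, a short case analysis using the distinct-leaders property of autoreduced sets together with autoreducedness of $\mathcal{B}$ and $\rk B_{j} = \rk A_{j}$ for $j<k$ shows that $B_{k} \in J$ is nonzero and reduced with respect to $\mathcal{A}$---contradicting the hypothesis. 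The main task is then to show that $J$ itself has no nonzero element reduced with respect to $\mathcal{A}$. Since $J$ is again homogeneous, I reduce to a homogeneous $P \in J$ of degree $d \ge 1$, written $P = \sum_{k} f_{k}\gamma_{k}A_{i_{k}}$ with each $f_{k}$ homogeneous of degree $d-1$. Attach to each such representation the height $w := \max_{k}(\text{leader of } f_{k}\gamma_{k}A_{i_{k}})$ and choose one minimising $w$ with a secondary well-ordered tie-breaker (such as the number of summands whose leader equals $w$). If $w$ exceeds $u_{P}$ then cancellation must occur at $w$, and coherence condition (ii), applied to two colliding summands whose transforms of the respective leaders coincide at $w$, supplies an identity rewriting the pair as a combination whose summands have strictly smaller height---contradicting minimality. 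Coherence condition (i) is used in parallel to handle the auxiliary transforms $\gamma A$ generated during the rewriting. Hence $w = u_{P}$, which forces $u_{P}$ to contain a transform of some $u_{A_{i_{k}}}$ as a factor, contradicting reducedness of $P$. Thus $P = 0$, and the preliminary lemma yields the conclusion.

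The principal obstacle is the Buchberger-style descent in the converse: coherence (ii) furnishes an identity per colliding pair, whereas a representation may have many summands simultaneously attaining the top height, so one must iterate the pairwise reduction while preserving a strictly decreasing well-ordered invariant on representations. Modulo this combinatorial bookkeeping, the forward direction is a rather direct consequence of the reduction theorem and the characterization of characteristic sets once linearity has been established via the homogeneity of linear $\sigma^{\ast}$-ideals.
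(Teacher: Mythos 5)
First, a point of comparison: the paper does not prove this statement at all --- it is quoted verbatim from \cite[Theorem 6.5.3]{KLMP} --- so your proposal can only be judged against the standard argument. Your converse direction is that standard argument (a Rosenfeld/Buchberger-style descent on representations $P=\sum_{k}f_{k}\gamma_{k}A_{i_{k}}$), and its outline is sound, including the preliminary lemma that an autoreduced subset with respect to which $J$ has no nonzero reduced element is a characteristic set. Two small remarks: the descent is cleaner if the height is taken to be $\max_{k}\gamma_{k}u_{A_{i_{k}}}$ rather than the leader of the whole product $f_{k}\gamma_{k}A_{i_{k}}$ (the latter can be inflated by terms of $f_{k}$, and your final step ``$w=u_{P}$ forces $u_{P}$ to contain a transform of some $u_{A_{i_{k}}}$'' does not follow for that invariant); and the bookkeeping you worry about is much lighter than in the general quasi-linear case because the initials of linear $\sigma^{\ast}$-polynomials are nonzero elements of $K$, so the coherence identities never introduce new polynomial factors. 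The coherence half of the forward direction is also fine: $\gamma A$ and the pairwise differences lie in $J$, so by Theorem 3.8 their remainders are elements of $J$ reduced with respect to $\mathcal{A}$, hence zero by Proposition 3.10(i).

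The genuine gap is the linearity step of the forward direction. You assert that if some $A_{i}$ in a characteristic set had degree exceeding one, a well-chosen homogeneous component would give an autoreduced subset of $J$ of strictly lower rank. That step fails: the rank of an autoreduced set depends only on the leaders and the degrees in the leaders, and passing to a homogeneous component need not change either. Concretely, in $K\{y_{1},y_{2}\}^{\ast}$ with the standard ranking take $J=[y_{2}]^{\ast}$ and $A=y_{2}+y_{1}y_{2}=(1+y_{1})y_{2}$. Then $A\in J$, the set $\{A\}$ is autoreduced with leader $y_{2}$ and $\deg_{y_{2}}A=1$, so $\rk\{A\}=\rk\{y_{2}\}$; and since by Proposition 3.14(i) every nonzero element of $J$ contains some $\gamma y_{2}$, no autoreduced subset of $J$ can have lower rank (in particular none can have two elements). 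Hence $\{A\}$ is, under the paper's definitions, a characteristic set of the linear ideal $J$, yet $A$ is not linear, and replacing $A$ by its degree-one component $y_{2}$ does not lower the rank. So homogeneity of $J$ cannot by itself yield linearity of an \emph{arbitrary} characteristic set; one must either invoke whatever normalization of characteristic sets is built into the conventions of \cite{KLMP}, or settle for proving that $J$ \emph{admits} a linear coherent characteristic set (which is all that the computations in Section 4 actually use, via Proposition 3.15 and the two-step procedure). As written, this step of your forward direction is unjustified.
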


\begin{remark}
Analyzing the proof of the last theorem given in \cite{KLMP}, one
can see that this proof also works for the case when all elements of
a coherent autoreduced set are {\bf quasi-linear
$\sigma^{\ast}$-polynomials}, i. e., $\sigma^{\ast}$-polynomials
that are linear with respect to their leaders. (Of course, in this
case the $\sigma^{\ast}$-ideal $[\mathcal{A}]^{\ast}$ is not linear;
moreover, it is not necessarily prime.)
\end{remark}

\begin{proposition}
With the above notation, let $A$ be a quasi-linear $\sigma$-polynomial in the ring of difference ($\sigma$-) polynomials
$K\{y_{1},\dots, y_{n}\}$. Then

{\em (i)}\, If
\begin{equation}
M = \sum_{i=1}^{p}C_{i}\tau_{i}A
\end{equation}
where $\tau_{i}\in T$ and $C_{i}\in K\{y_{1},\dots, y_{n}\}$ ($1\leq i\leq p$),
then $M$ contains the leader of some $\tau_{i}A$.

{\em (ii)}\, The $\sigma$-ideal $[A]$ of the ring $K\{y_{1},\dots,
y_{n}\}$ is prime.

Furthermore, similar statements hold when the $\sigma$-field $K$ is inversive
and $A$ is a quasi-linear $\sigma^{\ast}$-polynomial in $K\{y_{1},\dots, y_{n}\}^{\ast}$.
(In this case, $\tau_{i}\in\Gamma$ and $\tau_{i}u_{A}$ are transforms of the leader of $A$ in the
sense of Definition 3.3.)
\end{proposition}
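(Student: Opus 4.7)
The proof is by induction on $p$. After combining like terms assume the $\tau_i\in T$ are distinct, so the leaders $\tau_i u_A$ are distinct; let $v=\tau_* u_A$ be the highest among them in the orderly ranking. Since $A=I_A u_A+A_0$ is quasi-linear with $I_A,A_0$ free of $u_A$, the ranking axioms force every term of $\tau_i(I_A)$ and $\tau_i(A_0)$ to lie strictly below $\tau_i u_A\leq v$; consequently $\tau_i A$ is free of $v$ for every $i\neq *$. Expanding $C_i=\sum_k C_i^{\{k\}}v^k$ with $C_i^{\{k\}}$ free of $v$, the coefficient of $v^K$ in $M$ is
\[
[M]_K = C_*^{\{K-1\}}\tau_*(I_A)+C_*^{\{K\}}\tau_*(A_0)+\sum_{i\neq *}C_i^{\{K\}}\tau_i A
\]
(with $C_*^{\{-1\}}:=0$). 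Setting $K_0=\max_i\deg_v(C_i)$, the top coefficient is $[M]_{K_0+1}=C_*^{\{K_0\}}\tau_*(I_A)$: if $C_*^{\{K_0\}}\neq 0$ this is nonzero in the integral domain $K\{y_1,\dots,y_n\}$, so $v$ appears in $M$ and we are done. Otherwise $C_*^{\{K_0\}}=0$ and the relation $[M]_{K_0}=0$ gives $\sum_{i\neq *}C_i^{\{K_0\}}\tau_i A=-C_*^{\{K_0-1\}}\tau_*(I_A)$, a sum on the left with $p-1$ transforms; the inductive hypothesis then produces some $\tau_j u_A$ ($j\neq *$) appearing in this sum, and tracing its contribution back through the expansion of $M$ shows the same $\tau_j u_A$ must appear in $M$.

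\textbf{Plan for (ii).} Part (i) implies that $[A]$ contains no nonzero $\sigma$-polynomial reduced with respect to $\{A\}$ (any such would be a nonzero element of $[A]$ free of every transform of $u_A$), and in particular $\{A\}$ is a characteristic set of $[A]$. To prove primality I would construct a generic zero $\eta$ of $[A]$ directly, exploiting quasi-linearity. Work in a $\sigma$-field extension $L$ of $K$ obtained by first adjoining indeterminates $z_\beta$ indexed by the terms $\beta\in\Gamma Y\setminus Tu_A$, algebraically independent over $K$, and then recursively defining $z_{\tau u_A}:=-\tau(A_0)(z)/\tau(I_A)(z)$ in order of increasing rank of $\tau u_A$ — at each step the right-hand side only involves $z_\beta$'s of strictly lower rank, which are already defined. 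This substitution yields an $n$-tuple $\eta$ solving every $\tau A=0$, and the defining $\sigma$-ideal $P$ of $\eta$ is a prime reflexive $\sigma$-ideal containing $A$, hence $P\supseteq[A]$. For equality, any $F\in P\setminus[A]$ admits by the difference analog of Theorem 3.8 a relation $JF\equiv F_0\pmod{[A]}$ with $J\in I(\{A\})$ and $F_0$ reduced; since $F\in P$ forces $F_0(\eta)=0$, while $F_0$ is a nonzero polynomial in the algebraically independent $z_\beta$'s — hence does not vanish at $\eta$ — we get $F_0=0$, i.e. $JF\in[A]$. Iterating the reduction (together with (i)) shows that the defining ideal $P$ cannot strictly contain $[A]$; hence $P=[A]$, and $[A]$ is prime. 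The inversive case is handled analogously with $T$ replaced by $\Gamma$ and the relevant orthant $\Gamma_j$.

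\textbf{Main obstacle.} In (i), the delicate case is the descent when the $v$-leading coefficient vanishes by cancellation: one must choose the inductive measure carefully (e.g. lexicographically on $(p,K_0)$) to guarantee strict decrease, and handle the sub-case in which $\sum_{i\neq *}C_i^{\{K_0\}}\tau_i A$ happens to vanish identically. In (ii), verifying that the recursion $z_{\tau u_A}:=-\tau(A_0)(z)/\tau(I_A)(z)$ is well-posed requires showing $\tau(I_A)(z)\neq 0$ for every $\tau$, which rests on the fact that $\tau(I_A)$ is a nonzero $\sigma$-polynomial and the $z_\beta$'s are algebraically independent; completing the identification $P=[A]$ is the most delicate step, as it is precisely where the saturation structure of $[A]$ at the multiplicative set $I(\{A\})$ is used, and it is exactly (i) that prevents this saturation from strictly enlarging $[A]$.
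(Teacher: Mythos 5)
Your overall strategy for (i) — isolate the highest transform $\tau_*A$, use quasi-linearity to read off the top coefficient with respect to $v=\tau_*u_A$, and descend — is the same idea the paper uses, but your descent does not close at exactly the point you flag. When $C_*^{\{K_0\}}=0$ and $K_0\geq 1$, the identity $[M]_{K_0}=0$ gives $\sum_{i\neq *}C_i^{\{K_0\}}\tau_iA=-C_*^{\{K_0-1\}}\tau_*(I_A)$, and the right-hand side is \emph{not} a combination of the $\tau_iA$; moreover nothing prevents $\tau_*(I_A)$ from containing terms of the form $\tau_ju_A$ with $\tau_ju_A<\tau_*u_A$ (the terms of $I_A$ are merely lower than $u_A$, and their images under $\tau_*$ can coincide with lower transforms of $u_A$). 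So the inductive hypothesis only tells you that some $\tau_ju_A$ occurs inside a quantity that is identically zero, and ``tracing its contribution back'' to $M=[M]_0$ does not follow — the occurrence lives entirely in the coefficient of $v^{K_0}$ and can cancel there. The paper closes the descent differently: assuming a minimal counterexample with $p$ summands, it replaces each $C_i$ ($i<p$) by a $D_i$ congruent to $C_i$ modulo $(\tau_pA)$ and free of $u_{\tau_pA}$, so that $M-\sum_{i<p}D_i\tau_iA$ is simultaneously a multiple of $\tau_pA$ and free of the leader of $\tau_pA$, hence zero by quasi-linearity and the domain property; this yields a representation of $M$ with $p-1$ summands, contradicting minimality. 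You should restructure your induction along these lines (pseudo-reduce the $C_i$ against $\tau_*A$ before extracting coefficients).

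For (ii) you take a genuinely different route (generic zero) from the paper, but it has two unresolved gaps. First, the well-posedness of the recursion $z_{\tau u_A}:=-\tau(A_0)(z)/\tau(I_A)(z)$ does not follow from the algebraic independence of the free $z_\beta$'s, because $\tau(I_A)$ is evaluated at terms that include previously defined values $z_{\tau'u_A}$, which are algebraically \emph{dependent} on the free ones; showing $\tau(I_A)(z)\neq 0$ amounts to showing $\tau(I_A)$ is not in the defining ideal of the partial tuple, which needs a separate argument (essentially part (i) plus the fact that $I_A$ is reduced with respect to $A$). Second, your identification $P=[A]$ only yields $JF\in[A]$ for $F\in P$ with $J$ a product of transformed initials; passing from $JF\in[A]$ to $F\in[A]$ requires the saturation $[A]:\Upsilon(\{A\})=[A]$, which is essentially the primality you are trying to establish — ``iterating the reduction'' is not a proof of this. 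The paper's argument is shorter and avoids both issues: given $B,C\notin[A]$, reduce each modulo $[A]$ (eliminating transforms of $u_A$ from the top down) to $B',C'$ containing no transform of $u_A$; if $BC\in[A]$ then $B'C'\in[A]$ would be a nonzero element of $[A]$ containing no $\tau u_A$, contradicting part (i). I recommend adopting that argument rather than repairing the generic-zero construction.
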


\begin{proof}
We will prove our statement in the case when $A$ is a quasi-linear $\sigma$-polynomial in
$K\{y_{1},\dots, y_{n}\}$ (the proof for a $\sigma^{\ast}$-polynomial in
$K\{y_{1},\dots, y_{n}\}^{\ast}$ is similar).

Suppose that statement (i) is not true. Let $p$ be the smallest
positive integer such that one has equality (3.2) where $M$ does not
contain any $u_{\tau_{i}A} = \tau_{i}u_{A}$. Without loss of
generality we can assume that  $\tau_{1} < \dots <\tau_{p}$ (so that
$u_{\tau_{1}A} <\dots < u_{\tau_{p}A}$) and $\tau_{p}A$ does not
divide any $C_{i}$ for $i=1,\dots, p-1$ (otherwise, representation
(3.2) is not minimal in the above sense). If one writes $C_{i}$
($1\leq i\leq p-1$) as a polynomial of $u_{\tau_{p}A}$, $C_{i} =
\sum_{i=0}^{d_{i}}I_{i}u_{\tau_{p}A}^{i}$, where the
$\sigma$-polynomials $I_{i}$ do not contain $u_{\tau_{p}A}$, then
$\deg_{u_{\tau_{p}A}}(C_{i} -
I_{d_{i}}(u_{\tau_{p}A})^{d_{i}-1}\tau_{p}A < d_{i}$, so we can
obtain a $\sigma^{\ast}$-polynomial $D_{i}$ such that $D_{i}$ does
not contain $u_{\tau_{p}A}$ and $D_{i}\equiv C_{i}\, (mod
(\tau_{p}A)\,)$. Then $\tau_{p}A$ divides the $\sigma$-polynomial
$M' = M-\sum_{i=1}^{p-1}D_{i}(\tau_{i}A)$. However, $M'$ does not
contain $u_{\tau_{p}A}$ and therefore cannot be divisible by
$\tau_{p}A$. It follows that $M'=0$, hence $M =
\sum_{i=1}^{p-1}D_{i}(\tau_{i}A)$, contrary to the assumption about
the minimality of $p$ in (3.2).

In order to show that the $\sigma$-ideal $[A]$ is prime, consider
two $\sigma$-polynomials $B, C\in K\{y_{1},\dots, y_{n}\}\setminus
[A]$. Let $\tau_{1}u_{A},\dots, \tau_{q}u_{A}$ be all transforms of
$u_{A}$ that appear in $B$ such that $\tau_{1} <\dots < \tau_{q}$.
As above, one can subtract from $B$ an appropriate linear
combination of $\tau_{p}A$ with coefficients in $K\{y_{1},\dots,
y_{n}\}$ in order to eliminate $\tau_{q}u_{A}$ in $B$, so that the
difference will contain only transforms $\tau_{i}u_{A}$ of $u_{A}$
with $i < q$. Repeating this process we arrive at a
$\sigma$-polynomial $B'$ such that $B - B'\in [A]$ and $B'$ does not
contain any $\tau u_{A}$ ($\tau\in T$). Similarly we can find $C'\in
K\{y_{1},\dots, y_{n}\}$ such that $C'$  does not contain any $\tau
u_{A}$ ($\tau\in T$) and $C - C'\in [A]$. Then $B'C'\in [A]$ and
$B'C'$ contains no term of the form $\tau u_{A}$ ($\tau\in T$). This
contradiction with the first part of the proposition shows that
$BC\notin [A]$, so the $\sigma$-ideal $[A]$ is prime.
\end{proof}

The following result is a direct consequence of Theorem 3.12 (taking into account remark 3.13)

\begin{proposition} Let $K$ be an inversive difference
field with a basic set $\sigma$ and let $\preccurlyeq$ be a preorder
on $K\{y_{1},\dots, y_{n}\}^{\ast}$ such that $A_{1}\preccurlyeq
A_{2}$ if and only if $u_{A_{2}}$ is a transform of $u_{A_{1}}$.
Furthermore, let $A$ be a quasi-linear $\sigma^{\ast}$-polynomial in
$K\{y_{1},\dots, y_{n}\}^{\ast}\setminus K$ and $\Gamma A = \{\gamma
A\,|\,\gamma\in\Gamma\}$. Then the set of all minimal (with respect
to $\preccurlyeq$) elements of $\Gamma A$ is a characteristic set of
the $\sigma^{\ast}$-ideal $[A]^{\ast}$.
\end{proposition}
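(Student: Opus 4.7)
The plan is to verify that $\mathcal{A}$, the set of $\preccurlyeq$-minimal elements of $\Gamma A$, is a coherent autoreduced set of quasi-linear $\sigma^{\ast}$-polynomials satisfying $[\mathcal{A}]^{\ast}=[A]^{\ast}$, and then invoke Theorem 3.12 together with Remark 3.13. Finiteness of $\mathcal{A}$ follows from Dickson's Lemma applied orthant by orthant: within each $(\Gamma Y)_{j}$ the restriction of $\preccurlyeq$ to leaders corresponds to the product order on a copy of $\mathbb{N}^{m}$, so only finitely many leaders of elements of $\Gamma A$ are minimal per orthant, and there are only $2^{m}$ orthants. Quasi-linearity of every $\gamma A$ is inherited from $A$.

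To establish autoreducedness, I would fix distinct $B,C\in\mathcal{A}$ and show that no term of $B$ is a transform of $u_{C}$ (which suffices since $\deg_{u_{C}}C=1$ by quasi-linearity). If some term of $B$ equalled $\lambda u_{C}$ with $\lambda\in\Gamma_{j(C)}$, then by unwinding the relation $B=\gamma_{B}A$ and exploiting the orderly property of the ranking within the orthant of $u_{C}$, one would exhibit an element of $\Gamma A$ whose leader is a proper $\preccurlyeq$-predecessor of $u_{B}$, contradicting the minimality of $B$.

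For coherence, the central point is that every nonzero element $P\in[A]^{\ast}$ reduces to zero modulo $\mathcal{A}$. Writing $P=\sum c_{i}\tau_{i}A$, the $\sigma^{\ast}$-version of Proposition 3.14 forces $P$ to contain the leader $u_{\tau_{i}A}$ of some summand; by minimality of $\mathcal{A}$ this leader is a transform of some $u_{B'}$ with $B'\in\mathcal{A}$, so it can be eliminated via a transform of $B'$. Iterating and invoking well-ordering of ranks, the remainder must vanish, since any nonzero reduced remainder would itself lie in $[A]^{\ast}$ and would still contain a transform of some $u_{B'}$, contradicting its being reduced modulo $\mathcal{A}$. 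Applied to $\gamma B$ for $B\in\mathcal{A}$ this yields condition (i) of Definition 3.11, and applied to the $S$-polynomial $(\lambda_{2}I_{C})(\lambda_{1}B)-(\lambda_{1}I_{B})(\lambda_{2}C)$ (which lies in $[A]^{\ast}$) it yields condition (ii).

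Finally, $[\mathcal{A}]^{\ast}\subseteq[A]^{\ast}$ is immediate, and the reverse follows because $A$ itself reduces to zero modulo $\mathcal{A}$ by the mechanism above, placing $A\in[\mathcal{A}]^{\ast}$. Theorem 3.12 together with Remark 3.13 then yields the proposition. The main obstacle is the autoreducedness step, as applying $\gamma\in\Gamma$ to a polynomial can relocate its leader between orthants, so the naive identity $u_{\gamma A}=\gamma u_{A}$ need not hold and the ranking arguments must be carried out orthant by orthant.
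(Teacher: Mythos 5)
Your proposal is correct and takes essentially the same route as the paper, which gives no detailed argument but simply declares the result a direct consequence of Theorem 3.12 together with Remark 3.13; you supply the missing verification that the set of $\preccurlyeq$-minimal elements of $\Gamma A$ is a finite, coherent, autoreduced set of quasi-linear $\sigma^{\ast}$-polynomials generating $[A]^{\ast}$, with the coherence step resting, as it should, on the $\sigma^{\ast}$-version of Proposition 3.14. One minor simplification: the inclusion $[A]^{\ast}\subseteq[\mathcal{A}]^{\ast}$ is immediate because $[\mathcal{A}]^{\ast}$ is a reflexive $\sigma$-ideal containing some $\gamma A$ and hence contains $A=\gamma^{-1}(\gamma A)$, so the reduction argument you invoke there (which as written only yields $JA\in[\mathcal{A}]$ for a product of initials $J$) is unnecessary.
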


Theorem 3.12 and Proposition 3.15 imply the following method of
constructing a characteristic set of a proper linear
$\sigma^{\ast}$-ideal $I$ in the ring of $\sigma^{\ast}$-polynomials
$K\{y_{1},\dots, y_{s}\}^{\ast}$ (a similar method can be used for
building a characteristic set of a linear $\sigma$-ideal in the ring
of difference polynomials $K\{y_{1},\dots, y_{s}\}$). Suppose that
$I = [B_{1},\dots, B_{p}]^{\ast}$ where $B_{1},\dots, B_{p}$ are
linear $\sigma^{\ast}$-polynomials and $B_{1} < \dots < B_{p}$. It
follows from Theorem 2.4.11 that one should find a coherent
autoreduced set $\Phi \subseteq K\{y_{1},\dots, y_{s}\}^{\ast}$ such
that $[\Phi]^{\ast} = I$. Such a set can be obtained from the set
$\mathcal{B} = \{B_{1},\dots, B_{p}\}$ via the following two-step
procedure.

\smallskip

{\em Step 1.} Constructing an autoreduced set $\mathcal{A}\subseteq
I$ such that $[\mathcal{A}]^{\ast} = I$.

If $\mathcal{B}$ is autoreduced, set $\mathcal{A} = \mathcal{B}$. If
$\mathcal{B}$ is not autoreduced, choose the smallest $i$ ($1\leq
i\leq p$) such that some $\sigma^{\ast}$-polynomial $B_{j}$, $1\leq
i < j\leq p$, is not reduced with respect to $B_{i}$. Replace
$B_{j}$ by its remainder with respect to $B_{i}$ and arrange the
$\sigma^{\ast}$-polynomials of the new set $\mathcal{B}_{1}$ in
ascending order. Then apply the same procedure to the set
$\mathcal{B}_{1}$ and so on. After each iteration the number of
$\sigma^{\ast}$-polynomials in the set does not increase, one of
them is replaced by a $\sigma^{\ast}$-polynomial of lower or equal
rank, and the others do not change. Therefore, the process
terminates after a finite number of steps when we obtain a desired
autoreduced set $\mathcal{A}$.

\smallskip

{\em Step 2.} Constructing a coherent  autoreduced set
$\Phi\subseteq I$.

Let $\mathcal{A}_{0} = \mathcal{A}$ be an autoreduced subset of $I$
such that $[\mathcal{A}]^{\ast} = I$. If $\mathcal{A}$ is not
coherent, we build a new autoreduced set $\mathcal{A}_{1}\subseteq
I$ by adding to $\mathcal{A}_{0}$ new $\sigma^{\ast}$-polynomials of
the following types.

(a) $\sigma^{\ast}$-polynomials
$(\gamma_{1}I_{A_{1}})\gamma_{2}A_{2} -
(\gamma_{2}I_{A_{2}})\gamma_{1}A_{1}$ constructed for every pair
$A_{1}, A_{2}\in \mathcal{A}_{0}$ such that the leaders $u_{A_{1}}$
and $u_{A_{1}}$ have a common transform $v = \gamma_{1}u_{A_{1}} =
\gamma_{2}u_{A_{2}}$ and $(\gamma_{1}I_{A_{1}})\gamma_{2}A_{2} -
(\gamma_{2}I_{A_{2}})\gamma_{1}A_{1}$ is not reducible to zero
modulo $\mathcal{A}_{0}$.

(b) $\sigma^{\ast}$-polynomials of the form $\gamma A$ ($\gamma \in
\Gamma, A\in\mathcal{A}_{0}$) that are not reducible to zero modulo
$\mathcal{A}_{0}$.

It is clear that $\rk\mathcal{A}_{1} < \rk\mathcal{A}_{0}$. Applying
the same procedure to $\mathcal{A}_{1}$ and continuing in the same
way, we obtain autoreduced subsets $\mathcal{A}_{0},
\mathcal{A}_{1},\dots $ of $I$ such that $\rk\mathcal{A}_{i+1} <
\rk\mathcal{A}_{i}$ for $i=0, 1,\dots$. Obviously, the process
terminates after finitely many steps, so we obtain an autoreduced
set $\Phi \subseteq I$ such that $\Phi = \mathcal{A}_{k} =
\mathcal{A}_{k+1}=\dots$ for some $k\in\mathbb{N}$. It is easy to
see that $\Phi$ is coherent, so it is a characteristic set of the
ideal $I$.

\medskip

The following result, whose proof can be extracted from the proof of
Theorem 4.2.5 of \cite{Levin1}, gives a method of computation of the
$\sigma^{\ast}$-dimension polynomial associated with a reflexive
prime difference ideal of the ring of $\sigma^{\ast}$-polynomials
$K\{y_{1},\dots, y_{n}\}^{\ast}$. Therefore, it provides a method of
computation of the Einstein's strength of a prime system of
algebraic partial difference equations. In the next part of the
paper this result will be used for the evaluation of the strength of
systems of difference equations that represent finite-difference
schemes for PDEs describing certain chemical processes.

\begin{theorem}
Let $K$ be an inversive difference field with a basic set of
automorphisms $\sigma = \{\alpha_{1},\dots, \alpha_{m}\}$,
$R=K\{y_{1},\dots, y_{n}\}^{\ast}$ the ring of
$\sigma^{\ast}$-polynomials over $K$, and $P$ a reflexive prime
difference ideal of $R$. Let $L$ denote the quotient field of $R/P$
treated as the $\sigma^{\ast}$-field extension
$K\langle\eta_{1},\dots,\eta_{n}\rangle^{\ast}$ of $K$ where
$\eta_{i}$ ($1\leq i\leq n$) is the canonical image of $y_{i}$ in
$R/P$. Then the $\sigma^{\ast}$-dimension polynomial
$\psi_{\eta|K}(t)$ of $L/K$ associated with the set of
$\sigma^{\ast}$-generators $\eta = \{\eta_{1},\dots,\eta_{n}\}$ (see
Theorem 3.2) can be found as follows.

Let $\mathcal{A} = \{A_{1},\dots, A_{p}\}$ be a characteristic set
of the $\sigma^{\ast}$-ideal $P$ and for every $i=1,\dots, n$, let
$$E_{i} = \{(e_{i1},\dots,
e_{im})\in\mathbb{Z}^{m}\,|\,\alpha_{1}^{e_{i1}}\dots\alpha_{m}^{e_{im}}y_{i}\,\,\,
\text{is the leader of some element of}\,\,\,  \mathcal{A}\}$$ (of
course, some sets $E_{i}$ might be empty). Then
$$\psi_{\eta|K}(t) = \sum_{i=1}^{n}\phi_{E_{i}}(t)$$ where
$\phi_{E_{i}}(t)$ is the dimension polynomial of the set
$E_{i}\subseteq\mathbb{Z}^{m}$ whose existence is established by
Theorem 2.9.
\end{theorem}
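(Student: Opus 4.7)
The plan is to identify, for each sufficiently large $r$, an explicit transcendence basis of $K(\{\gamma\eta_j \mid \gamma\in\Gamma(r),\,1\le j\le n\})$ over $K$ of cardinality $\sum_{i=1}^n\phi_{E_i}(r)$. The natural candidate is
$$B_r := \bigcup_{i=1}^n\{\alpha_1^{k_1}\cdots\alpha_m^{k_m}\eta_i \mid (k_1,\ldots,k_m)\in W_{E_i}(r)\},$$
which has cardinality $\sum_i\phi_{E_i}(r)$ by Theorem~2.9. I would then show that $B_r$ is algebraically independent over $K$ and that every $\alpha^k\eta_i$ with $\ord k\le r$ is algebraic over $K(B_r)$; together these yield $\psi_{\eta|K}(r)=|B_r|$ and hence the claimed formula.

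For algebraic independence, I would lift any polynomial relation $F(\{\alpha^k\eta_i\})=0$ over $K$ among the elements of $B_r$ to a $\sigma^*$-polynomial $\widetilde F \in P$. By the very definition of $W_{E_i}$, every term $\alpha^k y_i$ appearing in $\widetilde F$ fails to be a transform of any leader $u_A$ with $A\in\mathcal{A}$, so $\widetilde F$ is reduced with respect to $\mathcal{A}$. Proposition~3.14(i) then forces $\widetilde F=0$.

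For the algebraic-over claim, pick a bound term $\alpha^k\eta_i$ with $k\notin W_{E_i}$. By definition there is an $A\in\mathcal{A}$ with leader $\alpha^e y_i$ and $e\unlhd k$; in particular $e$ and $k$ lie in the same orthant $\mathbb{Z}_j^{(m)}$, so $\gamma:=\alpha^{k-e}$ belongs to $\Gamma_j$ and satisfies $\gamma u_A=\alpha^k y_i$. Reflexivity of $P$ gives $\gamma A\in P$, producing a relation $(\gamma A)(\eta)=0$ in $L$; writing $\gamma A$ as a polynomial in $\alpha^k y_i$, its leading coefficient $\gamma(I_A)$ avoids $P$ (because $I_A\notin P$ by Proposition~3.14(i) and $P$ is reflexive), so this relation exhibits $\alpha^k\eta_i$ as algebraic over the field generated by $K$ and the images of the remaining terms of $\gamma A$. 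An induction on rank under the fixed orderly ranking then propagates the conclusion to every bound term, since every non-leader term of $\gamma A$ has strictly lower rank than its leader.

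The delicate point will be controlling orthants during this induction. Within a single orthant the orderly property forces lower rank to imply lower order, so non-leader terms of $\gamma A$ stay inside $V_r(\eta):=\{\gamma\eta_j\mid \gamma\in\Gamma(r),\,1\le j\le n\}$ and the induction closes naturally. However, non-leader terms of $\gamma A$ can migrate to other orthants, where their orders are no longer bounded by $r$. To handle this I would take $r$ large compared to the uniform spread $H:=\max_{A\in\mathcal{A}}\max_l\sum_s|l_s-e_s|$ over all pairs consisting of a leader exponent $e$ and a non-leader exponent $l$ of the same $A$, so that every non-leader term of every relevant transform $\gamma A$ satisfies $\ord\le r+H$. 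The induction then runs inside $V_{r+H}(\eta)$, and the asymptotic agreement of $|B_r|$ with $\psi_{\eta|K}(r)$ for $r\gg 0$ follows from the polynomiality guaranteed by Theorems~2.9 and~3.2.
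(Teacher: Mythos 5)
Your overall strategy is the standard one (it is exactly the route of the source the paper cites for this theorem): exhibit $B_r=\bigcup_i\{\alpha_1^{k_1}\cdots\alpha_m^{k_m}\eta_i\mid k\in W_{E_i}(r)\}$ as a transcendence basis of $K(\Gamma(r)\eta)$ over $K$. The independence half is essentially right: a relation lifts to a nonzero element of $P$ that contains no transform of any leader, hence is reduced with respect to $\mathcal{A}$, contradicting the fact that a characteristic set admits no nonzero reduced element of the ideal --- but note that this fact is Proposition 3.10(i), not Proposition 3.14(i), which concerns a single quasi-linear polynomial and is not applicable to a general reflexive prime $P$.

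The gap is in the upper-bound half. First, the assertion that ``every non-leader term of $\gamma A$ has strictly lower rank than its leader'' begs the question: what you need is that every term of $\gamma A$ other than $\gamma u_A$ has lower rank than $\gamma u_A$ (equivalently, that $\gamma u_A$ remains the leader of $\gamma A$), and for an arbitrary orderly ranking (orderliness in Definition 3.4 is only imposed within each orthant) this can fail, which is precisely the migration phenomenon you flag. Second, and decisively, your proposed repair does not prove the theorem: running the induction inside $V_{r+H}(\eta)$ yields only $\sum_i\phi_{E_i}(r)\le\psi_{\eta|K}(r)\le\sum_i\phi_{E_i}(r+H)$, and two numerical polynomials sandwiched this way need not coincide (e.g.\ $2t$ and $2t+1$ with $H=1$); ``asymptotic agreement by polynomiality'' recovers only the degree and leading coefficient, i.e.\ Einstein's coefficient of freedom, whereas the whole point of the dimension polynomial is the finer information in the lower-order terms. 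The correct resolution is that no escape of order occurs for the standard ranking: since that ranking compares total order first across all orthants, the leader $u_A$ maximizes $\ord$ among the terms of $A$, and since $\gamma$ lies in the orthant of $u_A$ one has $\ord(\gamma u_A)=\ord\gamma+\ord u_A$ while $\ord(\gamma w)\le\ord\gamma+\ord w\le\ord(\gamma u_A)$ for every other term $w$ of $A$ (with the tie-break in the lexicographic comparison also going the right way, so $\gamma w<\gamma u_A$). Hence every term of $\gamma A$ already lies in $\Gamma(r)Y$ when $\ord(\gamma u_A)\le r$, the induction on rank closes at level $r$, and one obtains the exact equality $\trdeg_KK(\Gamma(r)\eta)=\sum_i\Card W_{E_i}(r)$ for all sufficiently large $r$, which is what the polynomial identity requires.
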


\section{Evaluation of the Einstein's strength of difference schemes
for some reaction-diffusion equations}

{\bf 1.}\, {\bf The diffusion equation in one spatial dimension} for a
constant collective diffusion coefficient $a$ and unknown function
$u(x, t)$ describing the density of the diffusing material at given
position $x$ and time $t$ is as follows:
\begin{equation}{\frac{\partial u(x, t)}{\partial t}} =
a{\frac{\partial^{2}u(x, t)}{\partial x^{2}}}.\end{equation} ($a$ is
a constant). Let us compute the strength of difference equations
that arise from three most common difference schemes for equation
(4.1).

\medskip

\large
\begin{center}
Strength of the forward difference scheme
\end{center}
\normalsize
\medskip

In order to obtain the forward difference scheme for the diffusion
equation (4.1), the occurrences of $\D\frac{\partial
u(x,t)}{\partial x}$ and $\D\frac{\partial u(x,t)}{\partial t}$ are
replaced by $u(x + 1, t) - u(x, t)$ and $u(x, t + 1) - u(x, t)$,
respectively (after the appropriate rescaling, one can use these
expressions instead of the standard approximations $\D\frac{u(x + h,
t) - u(x, t)}{h}$ and $\D\frac{u(x, t) - u(x, t+h)}{h}$ with a small
step $h$).

We obtain the equation in finite differences
\begin{equation}
u(x, t + 1) - u(x, t) = a(u(x + 2, t) - 2u(x + 1, t) + u(x, t)).
\end{equation}

Let $K$ be an inversive difference functional field with basic set
$\sigma = \{\alpha_{1}:f(x, t)\mapsto f(x+1, t),\, \alpha_{2}:f(x,
t)\mapsto f(x, t+1)\}$ ($f(x, t)\in K$) containing $a$ and let
$K\{y\}^{\ast}$ be the ring of $\sigma^{\ast}$-polynomials in one
$\sigma^{\ast}$-indeterminate $y$ over $K$. Treating $y$ as the
unknown function $u(x, t)$ in the equation (4.2), we can write this
equation as
\begin{equation}
a\alpha_{1}^{2}y - 2a\alpha_{1}y - \alpha_{2}y + (a+1)y = 0.
\end{equation}
Since the left-hand side of the last equation is a linear
$\sigma^{\ast}$-polynomial, it generates a linear (and therefore a
prime) $\sigma^{\ast}$-ideal $P = [A]^{\ast}$ in $K\{y\}^{\ast}$.

\medskip

Applying Proposition 3.15, we obtain a characteristic set $\mathcal{A}
= \{A_{1}, A_{2}, A_{3}, A_{4}\}$ of the ideal $P$ where
\begin{align*}
A_{1} = A = a\alpha_{1}^{2}y - 2a\alpha_{1}y - \alpha_{2}y +
(a+1)y,\\ A_{2} = \alpha_{1}^{-1}A = -\alpha_{1}^{-1}\alpha_{2}y +
a\alpha_{1} y + (a+1)\alpha_{1}^{-1}y + 2ay,\\ A_{3} =
\alpha_{1}^{-1}\alpha_{2}^{-1}A = a\alpha_{1}\alpha_{2}^{-1}y
+(a+1)\alpha_{1}^{-1}\alpha_{2}^{-1}y - \alpha_{1}^{-1}y -
2a\alpha_{2}^{-1}y,\\A_{4} = \alpha_{1}^{-2}\alpha_{2}^{-1}A =
(a+1)\alpha_{1}^{-2}\alpha_{2}^{-1}y + \alpha_{1}^{-2}y
+2a\alpha_{1}\alpha_{2}^{-1}y + a\alpha_{2}^{-1}y.
\end{align*}

The leaders of these $\sigma^{\ast}$-polynomials are
$\alpha_{1}^{2}y,\, \alpha_{1}^{-1}\alpha_{2}y,\,
\alpha_{1}\alpha_{2}^{-1}y$, and $\alpha_{1}^{-2}\alpha_{2}^{-1}y$,
respectively (they are written first in the
$\sigma^{\ast}$-polynomials $A_{i}$ above). Therefore, the
$\sigma^{\ast}$-dimension polynomial of equation (4.3) is equal to
the dimension polynomial of the subset $$E = \{((2, 0), (-1, 1), (1,
-1), (-2, -1)\}$$ of $\mathbb{Z}^{2}$. Applying the results of
theorems 2.10 and 2.8 we obtain that the $\sigma^{\ast}$-dimension
polynomial of equation (4.3) that expresses the Einstein's strength
of the forward difference scheme for (4.1) is
$$\psi_{Forw}(t) = 5t.$$

\medskip

\large
\begin{center}
Strength of the symmetric difference scheme
\end{center}
\normalsize
\medskip

The symmetric difference scheme for the diffusion equation (4.1) is
obtained by replacing the partial derivatives $\D\frac{\partial^{2}
u(x,t)}{\partial x^{2}}$ and $\D\frac{\partial u(x,t)}{\partial t}$
with $u(x + 1, t) - 2u(x, t)$ and $u(x, t + 1) - u(x, t-1)$,
respectively. It leads to the equation in finite differences
\begin{equation}
u(x, t+1) - u(x, t-1) = a(u(x+1, t) - 2u(x, t) + u(x-1, t))
\end{equation}
where $a$ is a constant. As in the case of the forward difference
scheme, let $K$ be an inversive difference functional field with
basic set $\sigma = \{\alpha_{1}:f(x, t)\mapsto f(x+1, t),\,
\alpha_{2}:f(x, t)\mapsto f(x, t+1)\}$ ($f(x, t)\in K$) and let
$K\{y\}^{\ast}$ be the ring of $\sigma^{\ast}$-polynomials in one
$\sigma^{\ast}$-indeterminate $y$ over $K$ ($y$ is treated as the
unknown function $u(x, t)$;  we also assume that $a\in K$). Then the
equation (4.4) can be written as
\begin{equation}
a\alpha_{1}y + a\alpha_{1}^{-1}y - \alpha_{2}y  - \alpha_{2}^{-1}y -
2ay = 0.
\end{equation}
By Proposition 3.15, the characteristic set of the
$\sigma^{\ast}$-ideal generated by the $\sigma^{\ast}$-polynomial $B
= a\alpha_{1}y + a\alpha_{1}^{-1}y - \alpha_{2}y  - \alpha_{2}^{-1}y
- 2ay$ is $\{B, \alpha_{1}^{-1}B\}$. The leaders of $B$ and
$\alpha_{1}^{-1}B$ are $\alpha_{1}y$ and $\alpha_{1}^{-2}y$,
respectively. Now Theorem 2.10 shows that the strength of the
equation (4.5) is expressed by the dimension polynomial
$\omega_{E}(t)$ where $$E = \{(1, 0, 0, 0), (0, 0, 2, 0), (1, 0, 1,
0), (0, 1, 0, 1)\}\subseteq\mathbb{N}^{4}.$$ Applying formula (2.3)
we obtain that the strength of the equation (4.5), which expresses
the symmetric difference scheme for (4.1), is represented by the
$\sigma^{\ast}$-dimension polynomial $$\psi_{Symm}(t) = 4t.$$

\medskip

\large
\begin{center}
Strength of the Crank-Nicholson scheme
\end{center}
\normalsize
\medskip

The Crank-Nicholson scheme for the diffusion equation with the above
interpretation of the shifts of arguments as two automorphisms
$\alpha_{1}$ and $\alpha_{2}$ gives the algebraic difference
equation of the form
\begin{equation}
\alpha_{1}\alpha_{2}y + a_{1}\alpha_{1}^{-1}\alpha_{2}y +
a_{2}\alpha_{1}y + a_{3}\alpha_{2}y + a_{4}\alpha_{1}^{-1}y +
a_{5}=0
\end{equation}
where $a_{i}$ ($1\leq i\leq 5$) are constants. Applying Proposition
3.15, we obtain that the $\sigma^{\ast}$-polynomial\, $C =
\alpha_{1}\alpha_{2}y + a_{1}\alpha_{1}^{-1}\alpha_{2}y +
a_{2}\alpha_{1}y + a_{3}\alpha_{2}y + a_{4}\alpha_{1}^{-1}y + a_{5}$
in the left-hand side of the last equation generates a
$\sigma^{\ast}$-ideal of $K\{y\}^{\ast}$ whose characteristic set
consists of the $\sigma^{\ast}$-polynomials $C,\,
\alpha_{1}^{-1}C,\, \alpha_{2}^{-1}C$, and
$\alpha_{1}^{-1}\alpha_{2}^{-1}C$. Their leaders are
$\alpha_{1}\alpha_{2}y$, $\alpha_{1}^{-2}\alpha_{2}y$,
$\alpha_{1}\alpha_{2}^{-1}y$, and $\alpha_{1}^{-2}\alpha_{2}^{-1}y$,
respectively.

Applying theorem 2.10 and 2.8 to the set $\{(1, 1), (-2, 1),\\ (1,
-1), (-2, -1)\}\subseteq\mathbb{Z}^{2}$ we obtain that the strength
of the equation (4.6) is expressed by the dimension polynomial
$$\psi_{Crank-Nickolson}(t) = 6t-1 .$$ Thus, the symmetric
difference scheme for the diffusion equation has higher strength
(that is, smaller dimension polynomial) than the forward difference
scheme and the Crank-Nicholson scheme, so the symmetric scheme is
the best among these three schemes from the point of view of the
Einstein's strength.

\bigskip

{\bf 2.}\, {\bf Murray, Fisher, Burger and some other quasi-linear
reaction-diffusion equations.}
\smallskip

Proposition 3.14 allows us to compute the strength of
reaction-diffusion equations of the  form
\begin{equation}
{\frac{\partial u}{\partial t}}  - \frac{\partial^{2}u}{\partial
x^{2}} = P\left(u, {\frac{\partial u}{\partial x}}\right)
\end{equation}
where $u=u(x, t)$ is a function of space and time variables $x$ and
$t$, respectively, and $P\left(u, {\D\frac{\partial u}{\partial
x}}\right)$ is a nonlinear function of $u$ and ${\D\frac{\partial
u}{\partial x}}$. Such equations have recently attracted a lot of
attention in the context of chemical kinetics, mathematical biology
and turbulence. The following PDEs, that are particular cases of
equation (4.7), are in the core of the corresponding mathematical
models.

Murray equation \cite{Murray}:

\begin{equation}
\frac{\partial^{2}u}{\partial x^{2}} + \mu_{1}u{\frac{\partial
u}{\partial t}} + \mu_{2}u - \mu_{3}u^{2},\,\,\, (\mu_{1}, \mu_{2},
\mu_{3}\,\,\,\text{are constants)}.
\end{equation}

Burgers equation \cite{Burgers}:
\begin{equation}
\frac{\partial^{2}u}{\partial x^{2}} - u{\frac{\partial u}{\partial
x}} - {\frac{\partial u}{\partial t}} = 0.
\end{equation}

Fisher equation \cite{Fisher}:
\begin{equation}
\frac{\partial^{2}u}{\partial x^{2}} - u{\frac{\partial u}{\partial
t}} + u(1-u) = 0.
\end{equation}

Huxley equation \cite{Wazwaz1}:
\begin{equation}
\frac{\partial^{2}u}{\partial x^{2}} - u{\frac{\partial u}{\partial
t}} - u(k-u)(u-1) = 0, \,\,\, k\neq 0.
\end{equation}

Burgers-Fisher equation \cite{Wazwaz}:

\begin{equation}
\frac{\partial^{2}u}{\partial x^{2}} + u{\frac{\partial u}{\partial
x}} - {\frac{\partial u}{\partial t}} + u(1-u)= 0.
\end{equation}

Burgers-Huxley equation \cite{McRub}:

\begin{equation}
\frac{\partial^{2}u}{\partial x^{2}} + u{\frac{\partial u}{\partial
x}} - {\frac{\partial u}{\partial t}} + u(k-u)(u-1)= 0,\,\,\, k\neq
0.
\end{equation}

FitzHugh-Nagumo equation \cite{Wazwaz1}:

\begin{equation}
\frac{\partial^{2}u}{\partial x^{2}} + u{\frac{\partial u}{\partial
x}} + u(1-u)(a-u)= 0, \,\,\, a\neq 0.
\end{equation}

The last seven equations are of the form
\begin{equation}
\frac{\partial^{2}u}{\partial x^{2}} + (au + b){\frac{\partial
u}{\partial x}} + c{\frac{\partial u}{\partial t}} + F(u) = 0
\end{equation}
where $a, b, c$ are constants ($c\neq 0$, $ab\neq 0$) and $F(u)$ is
a polynomial in one variable $u$ with coefficients in the ground
functional field $K$. Therefore, the forward difference scheme for
equations (4.8) - (4.14) leads to algebraic difference equations of
the form

\begin{equation}
\alpha_{1}^{2}y + (ay+b-2)\alpha_{1}y + c\alpha_{2}y + G(y) = 0.
\end{equation}
(As before, we set $y=u$, denote the automorphisms of the ground
field $f(x, t)\mapsto f(x+1, t)$ and $f(x, t)\mapsto f(x, t+1)$ by
$\alpha_{1}$ and $\alpha_{2}$, respectively, and write the monomials
in the left-hand side of the equation in the decreasing order of
their highest terms. We also set $G(y) = F(y)-ay^{2}-(b+c-1)y$.)

Applying Propositions 3.14 and 3.15, we obtain that the
$\sigma^{\ast}$-polynomial $A = \alpha_{1}^{2}y +
(ay+b-2)\alpha_{1}y + c\alpha_{2}y + G(y)$ generates a prime
$\sigma^{\ast}$-ideal of $K\{y\}^{\ast}$ ($\sigma = \{\alpha_{1},
\alpha_{2}\}$). As in the case of equation (4.3), we obtain that the
characteristic set of the ideal $[A]^{\ast}$ consists of the
$\sigma^{\ast}$-polynomials $A$, $\alpha_{1}^{-1}A$,
$\alpha_{1}^{-1}\alpha_{2}^{-1}A$ and
$\alpha_{1}^{-2}\alpha_{2}^{-1}A$  with leaders $\alpha_{1}^{2}y$,
$\alpha_{1}^{-1}\alpha_{2}y$, $\alpha_{1}\alpha_{2}^{-1}y$ and
$\alpha_{1}^{-2}\alpha_{2}^{-1}y$, respectively. Therefore (as in
the case of equation (4.3)\,) the $\sigma^{\ast}$-dimension
polynomial that expresses the Einstein's strength of the forward
difference scheme for each of the equations (4.8) - (4.14) is equal
to the dimension polynomial of the set $\{(2, 0), (-1, 1), (1, -1),
(-2, -1)\}\subseteq\mathbb{Z}^{2}$, that is, $$\psi_{Forw}(t) =
5t.$$

\medskip

The symmetric difference scheme for equation (4.15) (and therefore
for each of the equations (4.8) - (4.14)\,) gives an algebraic
difference equation of the form

\begin{equation}
(ay+b+1)\alpha_{1}y + (1-ay-b)\alpha_{1}^{-1}y + c\alpha_{2}y -
c\alpha_{2}^{-1}y + F(y) = 0.
\end{equation}
(Recall that we replace $\D\frac{\partial u}{\partial x}$,
$\D\frac{\partial^{2}u}{\partial x^{2}}$ and $\D\frac{\partial
u}{\partial t}$ with $(\alpha_{1} + \alpha^{-1} -2)u$, $(\alpha_{1}
- \alpha_{1}^{-1})u$ and $(\alpha_{2} - \alpha_{2}^{-1})u$,
respectively.)

Equation (4.17) is not quasi-linear with respect to the standard
ranking described after Definition 3.4. However, if one considers a
similar ranking with $\alpha_{2} > \alpha_{1}$, then the
$\sigma^{\ast}$-polynomial $B = (ay+b+1)\alpha_{1}y +
(1-ay-b)\alpha_{1}^{-1}y + c\alpha_{2}y - c\alpha_{2}^{-1}y + F(y)$
in the left-hand side of (4.17) is a quasi-linear one with the
leader $\alpha_{2}y$. By Propositions 3.14 and 3.15, the
$\sigma^{\ast}$-polynomials $B$ and $\alpha_{2}^{-1}B$ form a
characteristic set of the prime $\sigma^{\ast}$-ideal $[B]^{\ast}$
of $K\{y\}^{\ast}$. Since their leaders are, respectively,
$\alpha_{2}y$ and $\alpha_{2}^{-2}y$, the Einstein's strength of the
symmetric difference scheme for each of the equations (4.8) - (4.14)
is expressed by the dimension polynomial $\psi_{Symm}(t)$ of the set
$\{(1, 0), (0, 2)\}\subseteq\mathbb{Z}^{2}$. As in the case of
equation (4.5), we obtain that  $$\psi_{Symm}(t) = 4t.$$

Thus, one should prefer the symmetric scheme to the forward one
while considering the Einstein's strength of these schemes for  PDEs
(4.8) - (4.14).

\bigskip

{\bf 3.}\, {\bf The mathematical model of chemical reaction kinetics
with the diffusion phenomena} is described by a system of partial
differential equations of the form

\begin{equation}
\left\{%
\begin{array}{l>{\raggedright}p{.5\textwidth}}%
{\D\frac{\partial u_{1}}{\partial t}} =  {\D\frac{\partial^{2} u_{1}}{\partial x^{2}}} - k_{1}u_{1}u_{2} + k_{2}u_{3},\\\\
{\D\frac{\partial u_{2}}{\partial t}} =  {\D\frac{\partial^{2} u_{2}}{\partial x^{2}}} - k_{1}u_{1}u_{2} + k_{2}u_{3},\\\\
{\D\frac{\partial u_{3}}{\partial t}} =  {\D\frac{\partial^{2} u_{3}}{\partial x^{2}}} + k_{1}u_{1}u_{2} - k_{2}u_{3}.
\end{array}%
\right. %
\end{equation}

(see \cite{Lim}). Setting $v_{1} = u_{1} - u_{2}$ and $v_{2} = u_{1}
+ u_{3}$, one can rewrite the system in the form

\begin{equation}
\left\{%
\begin{array}{l>{\raggedright}p{.5\textwidth}}%
{\D\frac{\partial v_{1}}{\partial t}} =  {\D\frac{\partial^{2} v_{1}}{\partial x^{2}}},\\\\
{\D\frac{\partial v_{2}}{\partial t}} =  {\D\frac{\partial^{2} v_{2}}{\partial x^{2}}}, \\\\
{\D\frac{\partial u_{1}}{\partial t}} =  {\D\frac{\partial^{2}
u_{1}}{\partial x^{2}}} - k_{1}u_{1}^{2} + k_{1}u_{1}v_{1} +
k_{2}v_{2} -k_{2}u_{1}.
\end{array}%
\right. %
\end{equation}

\medskip

The forward difference scheme leads to the following system of
algebraic difference equations with three
$\sigma^{\ast}$-indeterminates  $y_{1},\, y_{2}$ and $y_{3}$ (they
stand for $v_{1},\, v_{2}$ and $u_{1}$, respectively), where $\sigma
= \{\alpha_{1}:f(x, t)\mapsto f(x+1, t),\, \alpha_{2}:f(x, t)\mapsto
f(x, t+1)\}$ ($f(x, t)$ is an element of an inversive ground
functional field $K$).

\begin{equation}
\left\{%
\begin{array}{l>{\raggedright}p{.5\textwidth}}%
\alpha_{1}^{2}y_{1} - 2\alpha_{1}y_{1} - \alpha_{2}y_{1} + 2y_{1} =
0,\\\\
\alpha_{1}^{2}y_{2} - 2\alpha_{1}y_{2} - \alpha_{2}y_{2} + 2y_{2} =
0,\\\\
\alpha_{1}^{2}y_{3} - 2\alpha_{1}y_{3} - \alpha_{2}y_{3} +
k_{1}y_{1}y_{3} - k_{1}y_{3}^{2} + k_{2}y_{2} - k_{2}y_{3} = 0
\end{array}%
\right. %
\end{equation}

where $k_{1}, k_{2}, k_{3}$ are constants in $K$.

Let $A$, $B$, and $C$ be the $\sigma^{\ast}$-polynomials in the
left-hand sides of the  first, second and third equations of the
last system, respectively. Combining Proposition 2.4.9 of
\cite{Levin1} (that states that every linear $\sigma^{\ast}$-ideal
in a ring of $\sigma^{\ast}$-polynomials is prime) and our
Proposition 3.14 we obtain that the $\sigma^{\ast}$-ideal $P = [A,
B, C]^{\ast}$ of the ring $K\{y_{1}, y_{2}, y_{3}\}^{\ast}$ is
prime.

Let $\eta_{i}$ denote the canonical image of the
$\sigma^{\ast}$-indeterminate $y_{i}$ in the
$\sigma^{\ast}$-quotient field $L$ of $K\{y_{1}, y_{2},
y_{3}\}^{\ast}/P$ ($i=1, 2, 3$) and let $L_{1} =
K\langle\eta_{1}\rangle^{\ast}$ and $L_{2} =
L_{1}\langle\eta_{2}\rangle^{\ast}$ (so that $L =
L_{2}\langle\eta_{3}\rangle^{\ast}$).

For any $r\in\mathbb{N}$ and $\zeta\in L$, let $\Gamma(r)\zeta=
\{\gamma(\zeta)\,|\,\gamma\in\Gamma(r)\}$, $L_{1r} =
K(\Gamma(r)\eta_{1})$, $L_{2r} = K(\Gamma(r)\eta_{1}\bigcup
\Gamma(r)\eta_{2})$, and $L_{r} = K(\Gamma(r)\eta_{1}\bigcup
\Gamma(r)\eta_{2}\bigcup\Gamma(r)\eta_{3})$. Then Theorem 3.2 and
the obvious fact that the fields $K\langle\eta_{1}\rangle^{\ast}$,
$K\langle\eta_{2}\rangle^{\ast}$ and
$K\langle\eta_{3}\rangle^{\ast}$ are pairwise algebraically disjoint
over $K$ imply that there are numerical polynomials $\psi_{1}(t)$,
$\psi_{2}(t)$ and $\psi_{3}(t)$ such that $\psi_{1}(r) =
\trdeg_{K}L_{1r}$, $\psi_{2}(r) = \trdeg_{L_{1r}}L_{2r}$ and
$\psi_{3}(r) = \trdeg_{L_{2r}}L_{r}$ for all sufficiently large
$r\in\mathbb{N}$. Each of the polynomials $\psi_{i}(t)$ represents a
$\sigma^{\ast}$-dimension polynomial of the equation of the form
(4.3). Indeed, the first two equation of system (4.20) are similar
to equation (4.3) and the third equation of the system is a
quasi-linear $\sigma^{\ast}$-equation whose dimension polynomial,
according to Proposition 3.14, is equal to the dimension polynomial
of (4.3). Since $\trdeg_{K}L_{r} = \trdeg_{K}L_{1r} +
\trdeg_{L_{1r}}L_{2r} + \trdeg_{L_{2r}}L_{r}$, the
$\sigma^{\ast}$-dimension polynomial of the extension $L/K$ is equal
to  $\psi_{1}(t) + \psi_{2}(t) + \psi_{3}(t)$. Now the formula for
the strength of the forward difference scheme for the diffusion
equation (4.1) shows that the strength of the forward difference
scheme for system (4.19) is expressed with the polynomial
$$\psi_{Forw}(t) = 15t.$$

Using the above arguments and the results for difference schemes for
equation (4.1), we obtain that the strengths of the symmetric and
Crank-Nicholson schemes for (4.19) are expressed with the
polynomials
$$\psi_{Symm}(t) = 12t\,\,\,
\text{and}\,\,\, \psi_{Crank-Nicholson}(t) = 18t - 3,$$
respectively. Therefore, in our case, as in the case of equation
(4.1), the symmetric scheme for system (4.19) is the best among
these three schemes.

\bigskip

{\bf 4.}\, {\bf The mass balance equations of chromatography} for
the case of $N$ components in a column slice (see, \cite[p. 24]{GSK}
form the following system of PDEs:
\begin{equation}
\left\{%
\begin{array}{l>{\raggedright}p{.5\textwidth}}%
{\D\frac{\partial C_{1}}{\partial t}} + F{\D\frac{\partial C_{s,1}}{\partial t}} + u{\D\frac{\partial C_{1}}{\partial z}}  = D_{L,1}{\D\frac{\partial^{2}C_{1}}{\partial z^{2}}},\\\\

\dots\\\\

{\D\frac{\partial C_{N}}{\partial t}} + F{\D\frac{\partial C_{s,N}}{\partial t}} + u{\D\frac{\partial C_{N}}{\partial z}}  = D_{L,N}{\D\frac{\partial^{2}C_{N}}{\partial z^{2}}}
\end{array}%
\right. %
\end{equation}
where  $C_{i}$ and $C_{s,i}$ ($1\leq i\leq N$) are the
concentrations of the individual components in the mobile phase and
in the stationary phase, respectively ($u$ and $D_{L,i}$, $1\leq
i\leq N$, are constants along the column).

Let $K$ be the inversive difference ($\sigma^{\ast}$-) functional
field over which we consider the system (''ground field''). As
before, $\sigma = \{\alpha_{1}, \alpha_{2}\}$ where for any function
$f(z)\in K$, $\alpha_{1}:f(z, t)\mapsto f(z+1, t)$ and
$\alpha_{2}:f(z, t)\mapsto f(z, t+1)$.

The {\bf forward difference scheme} leads to a system of algebraic
difference equations

\begin{equation}
\left\{%
\begin{array}{l>{\raggedright}p{.5\textwidth}}%
D_{L,1}\alpha_{1}^{2}y_{1} + a_{1}\alpha_{1}y_{1} -
F\alpha_{2}y_{2} - \alpha_{2}y_{1} + Fy_{2} + b_{1}y_{1} = 0,\\\\

\dots\\\\

D_{L,N}\alpha_{1}^{2}y_{2N-1} + a_{N}\alpha_{1}y_{2N-1} -
F\alpha_{2}y_{2N} - \alpha_{2}y_{2N-1} + Fy_{2N} + b_{N} y_{2N-1} =
0
\end{array}%
\right. %
\end{equation}
where $y_{2i-1}$ and $y_{2i}$ ($i=1,\dots, N$) stand for $C_{i}$ and
$C_{s,i}$, respectively, $a_{i} = - u-2D_{L,i}$, and $b_{i} =
-D_{L,i} - u - 1$. (The terms in the $\sigma^{\ast}$-polynomials in
the left-hand sides of the last system are arranged in the
decreasing order with respect to the standard ranking.)

Let $$A_{i} = D_{L,i}\alpha_{1}^{2}y_{2i-1} +
a_{i}\alpha_{1}y_{2i-1} - F\alpha_{2}y_{2i} - \alpha_{2}y_{2i-1} +
Fy_{2i} + b_{i} y_{2i-1}$$ ($1\leq i\leq N$), let $P$ denote the
linear (and therefore prime) $\sigma^{\ast}$-ideal $[A_{1},\dots,
A_{N}]^{\ast}$ of the ring $R = K\{y_{1},\dots, y_{2N}\}^{\ast}$ and
let $\eta_{j}$ denote the canonical image of the
$\sigma^{\ast}$-indeterminate $y_{j}$ in the factor ring $R/P$, so
that the quotient field $L$ of $R/P$ can be represented as $L =
K\langle\eta_{1},\dots, \eta_{2N}\rangle^{\ast}$. Then one can
repeat arguments of the analysis of system (4.20) and obtain that
the $\sigma^{\ast}$-dimension polynomial of system (4.22) is the sum
of $N$ equal $\sigma^{\ast}$-dimension polynomials associated with
the $\sigma^{\ast}$-field extensions of $K$ defined by equations
$A_{i} = 0$ ($i=1,\dots, N$).

In order to find the $\sigma^{\ast}$-dimension polynomial associated
with the equation $A_{1}=0$ (and therefore with any equation $A_{i}
= 0$, $1\leq i\leq N$), we can apply Proposition 3.15 and obtain
that $\sigma^{\ast}$-polynomials $A_{1},\, \alpha_{1}^{-1}A_{1},\,
\alpha_{1}^{-1}\alpha_{2}^{-1}A_{1}$ and
$\alpha_{1}^{-2}\alpha_{2}^{-1}A_{1}$ with leaders
$\alpha_{1}^{2}y_{1}$, $\alpha_{1}^{-1}\alpha_{2}y_{2}$,
$\alpha_{1}\alpha_{2}^{-1}y_{1}$ and
$\alpha_{1}^{-2}\alpha_{2}^{-1}y_{2}$, respectively, form a
characteristic set of the prime $\sigma^{\ast}$-ideal
$[A_{1}]^{\ast}$ of the ring $K\{y_{1}, y_{2}\}^{\ast}$. It follows
that the desired $\sigma^{\ast}$-dimension polynomial is the sum of
the dimension polynomials of subsets $ \{(2, 0), (1, -1)\}$ and
$\{(-1, 1), (-2, -1)\}$ of $\mathbb{Z}^{2}$. By Theorem 2.10, these
polynomials are equal, respectively,  to the dimension polynomials
$\omega_{E_{1}}(t)$ and $\omega_{E_{2}}(t)$ of subsets $$E_{1} =
\{(2, 0, 0, 0), (1, 0, 0, 1), (1, 0, 1, 0), (0, 1, 0, 1)\}$$ and
$$E_{2} = \{(0, 1, 1, 0), (0, 0, 2, 1), (1, 0, 1, 0),
(0, 1, 0, 1)\}$$ of $\mathbb{N}^{2}$. Applying formula (2.3) in
Theorem 2.8 we get $\omega_{E_{1}}(t) = t^{2}+3t+1$ and
$\omega_{E_{2}}(t)= t^{2}+4t$. It follows that the
$\sigma^{\ast}$-dimension polynomial of the equation $A_{1} = 0$ (as
well as of every equation $A_{i} = 0$, $1\leq i\leq N$) is
$\omega_{E_{1}}(t) + \omega_{E_{2}}(t) = 2t^{2}+7t+1$. Thus, the
$\sigma^{\ast}$-dimension polynomial that expresses the strength of
the forward difference scheme for system (4.21) is as follows:
\begin{equation}
\psi_{Forw}(t) = 2t^{2}+7t+1.
\end{equation}

Applying the {symmetric difference scheme} to system (4.21) we
obtain $N$ equations of the form
$$D_{L,i}(\alpha_{2i-1}-2+\alpha^{-1})y_{2i-1} -
(\alpha_{2}-\alpha_{2}^{-1})y_{2i-1} -
F(\alpha_{2}-\alpha_{2}^{-1})y_{2i} -
u(\alpha_{1}-\alpha_{1}^{-1})y_{2i-1} = 0$$ or

\begin{equation}
(u-D_{L,i})\alpha_{1}y_{2i-1} - (u+D_{L,i})\alpha_{1}^{-1}y_{2i-1} +
F\alpha_{2}y_{2i} - F\alpha_{2}^{-1}y_{2i} + 2D_{L,i}y_{2i-1} = 0.
\end{equation}
with $\sigma^{\ast}$-indeterminates in the ring $K\{y_{1},\dots,
y_{2N}\}^{\ast}$.

Denoting the $\sigma^{\ast}$-polynomial in the left-hand side of the
last equation by $B$, one can easily see that $\{B,
\alpha_{1}^{-1}B\}$ is a characteristic set of the linear
$\sigma^{\ast}$-ideal $[B]^{\ast}$ in $K\{y_{2i-1}, y_{2i}\}^{\ast}$
($1\leq i\leq N$). The corresponding leaders are
$\alpha_{1}y_{2i-1}$ and $\alpha_{1}^{-2}y_{2i-1}$.  It follows that
the strength of the system of difference equations obtained from the
symmetric difference scheme for system (4.21) is expressed with the
sum of the dimension polynomial of the set $\{(1, 0), (-2,
0)\}\subseteq\mathbb{Z}^{2}$ (this set corresponds to the leaders
containing $y_{2i-1}$) and the dimension polynomial of the empty set
(that corresponds to the leaders containing $y_{2i}$). Using the
last part of Theorem 2.9 and the result of the computation of the
$\sigma^{\ast}$-dimension polynomial of equation (4.5) we obtain
that the strength of the symmetric difference scheme for system
(4.21) is represented by the polynomial
\begin{equation}
\psi_{Symm}(t) = \sum_{i=0}^{2}(-1)^{2-i}2^{i}{2\choose
i}{{t+i}\choose i} + 4t = 2t^{2}+6t+1.
\end{equation}
As we see, in this case one should also prefer the symmetric scheme
to the forward one; however, there is quite a small difference
between these two schemes.

\bigskip

This work was supported by the NSF grant CCF-1714425.

\end{document}